\newtheorem{thm}{Theorem}[section]
\newtheorem{lem}[thm]{Lemma}
\newtheorem{prop}[thm]{Proposition}
\newtheorem{cor}[thm]{Corollary}
\theoremstyle{definition}
\newtheorem{defn}{Definition}[section]
\theoremstyle{remark}
\newtheorem{rem}{Remark}[section]
\newcommand\Tr{\operatorname{Tr}}
\newcommand\Rot{\operatorname{Rot}}
\newcommand\Lift{\operatorname{Lift}}
\newcommand\Li{\operatorname{Li}}
\begin{document}
\hypersetup{pageanchor=false}
\title{Loop clusters on the discrete circle}
\author{Yinshan Chang\\
\small{Max Planck Institute for Mathematics in the Sciences,}\\
\small{04103 Leipzig, Germany}\\
\small{ychang@mis.mpg.de}}
\date{\vspace{-5ex}}
\maketitle

\begin{abstract}
The loop clusters of a Poissonian ensemble of Markov loops on a finite or countable graph have been studied in \cite{Markovian-loop-clusters-on-graphs}. In the present article, we study the loop clusters associated with a rotation invariant nearest neighbor walk on the discrete circle $G^{(n)}$ with $n$ vertices. We prove a convergence result of the loop clusters on $G^{(n)}$, as $n\rightarrow\infty$, under suitable condition of the parameters. These parameters are chosen in such a way that the rotation invariant nearest neighbor walk on $G^{(n)}$, as $n\rightarrow\infty$, converges to a Brownian motion on circle $\mathbb{S}^{1}=\mathbb{R}/\mathbb{Z}$ with certain drift and killing rate. In the final section, we show that several limit results are predicted by Brownian loop-soup on $\mathbb{S}^{1}$.
\end{abstract}

\section{Introduction}
The loop cluster model is a model of random graphs constructed from a Poisson point process of loops on a finite or countable graph. An edge is defined to be open iff it is crossed by at least one loop in the Poisson point process. Then the open edges form loop clusters. The intensity measure of the Poisson point process is determined by some Markov chain on the same graph. This model was introduced and studied by Y. Le Jan in \cite{Amas-de-lacets-markoviens} and then by S. Lemaire and Y. Le Jan in \cite{Markovian-loop-clusters-on-graphs}. As an example in \cite{Markovian-loop-clusters-on-graphs}, they considered the loop cluster model associated with simple random walks on $\mathbb{Z}$ with uniform killing measures. In the present paper, we study the following variant: the loop cluster models associated with rotation-invariant nearest neighbor walks on discrete circles. 

\subsection{Basic settings}\label{subsect: basic settings}

For simplicity, we denote by Model$(\alpha,n,p_n,c_n)$ the model described in the following:

Consider a discrete circle $G^{(n)}$ with $n$ vertices $1,\ldots,n$ and $2n$ directed edges $$E^{(n)}=\{(1,2),(2,3),\ldots,(n-1,n),(n,1),(2,1),(3,2),\ldots,(n,n-1),(1,n)\}.$$
Define the clockwise edges set $E^{(n)}_{+}=\{(1,2),(2,3),\ldots,(n-1,n),(n,1)\}$ and the counter clockwise edges set $E^{(n)}_{-}=E^{(n)}\setminus E^{(n)}_{+}$. Consider a (sub-)Markovian generator\footnote{See \cite[Definition 2.1]{yinshan} for a precise definition of (sub-)Markovian generator.} $L^{(n)}$ which is the following matrix:
\begin{itemize}
 \item for any $e=(e-,e+)\in E^{(n)}_{+}$, $(L^{(n)})^{e-}_{e+}=p_n,(L^{(n)})^{e+}_{e-}=1-p_n,(L^{(n)})^{e-}_{e-}=-(1+c_n)$ for some numbers $0<p_n<1$ and $c_n>0$,
 \item $L^{(n)}$ is null elsewhere.
\end{itemize}
Set $(Q^{(n)})^{x}_{y}=1_{\{x\neq y\}}\frac{(L^{(n)})^x_y}{-(L^{(n)})^x_x}$. As in \cite{loop} and \cite{SznitmanMR2932978}, we define a loop measure and a Poissonian loop ensemble associated with $L^{(n)}$. By a pointed loop $\dot{\ell}=(x_1,\ldots,x_k)$ of length $k$, we mean a bridge on the graph from $x_1$ back to itself: $x_1\rightarrow x_2\rightarrow\cdots\rightarrow x_k\rightarrow x_1$. It is called ``non-trivial'' when $k\geq 2$. We define the non-trivial pointed loop measure by defining the mass of a pointed loop $\ell=(x_1,\ldots,x_k)$ ($k\geq 2$) as follows:
\begin{equation}\label{eq: 1}
\dot{\mu_n}(\dot{\ell}=(x_1,\ldots,x_k))=\frac{1}{k}(Q^{(n)})^{x_1}_{x_2}\cdots (Q^{(n)})^{x_{k-1}}_{x_k}(Q^{(n)})^{x_k}_{x_1}.
\end{equation}
A loop is an equivalence class of pointed loops: two pointed loops are equivalent iff they are the same under a rotation, e.g. the pointed loop $(1,2,3,4)$ is equivalent to $(2,3,4,1)$, but not $(1,2,4,3)$. We denote by $\ell$ the equivalence class of the pointed loop $\dot{\ell}$. The loop measure $\mu_n$ is the corresponding push-forward measure of the pointed loop measure $\dot{\mu}_n$ on the space of loops.

Denote by $\mathcal{DL}_{\alpha}^{(n)}$ the Poisson point process (or ``loop-soup'') of non-trivial loops with intensity measure $\alpha\mu_n$ where $\alpha>0$ is a fixed parameter. We view it as a multiset, by identifying it with its support. For example, if $\mathcal{DL}_{\alpha}^{(n)}=3\delta_{\ell_1}+2\delta_{\ell_2}$, we write $\mathcal{DL}_{\alpha}^{(n)}=\{\ell_1,\ell_1,\ell_1,\ell_2,\ell_2\}$. As in \cite{Markovian-loop-clusters-on-graphs}, we define the loop clusters as follows: 
\begin{defn}[Loop clusters]\label{defn: loop clusters}
 Given a realization of the loop-soup $\mathcal{DL}$, an undirected edge $\{x,y\}$ is closed iff there is no loop in the loop-soup $\mathcal{DL}$ which covers $\{x,y\}$ in any direction. Otherwise, we say that the undirected edge $\{x,y\}$ is open. For two vertices $x$ and $y$, we say that $x$ is connected to $y$ by the loop-soup $\mathcal{DL}$ if either $x=y$ or $x$ and $y$ are connected through open edges, which is denoted by $x\overset{\mathcal{DL}}{\longleftrightarrow}y$. Note that $\overset{\mathcal{DL}}{\longleftrightarrow}$ is an equivalence relation, which naturally defines a partition $\Pi(\mathcal{DL},G)$ of the vertex set $V$ of the graph $G$. Each partition is called a loop cluster. For simplicity of notation, we denote by $\mathcal{C}^{(n)}_{\alpha}$ the partition $\Pi(\mathcal{DL}_{\alpha}^{(n)},G^{(n)})$ associated with $\mathcal{DL}_{\alpha}^{(n)}$ on the discrete circle $G^{(n)}$.
\end{defn}

Note that our loop-soup is slightly different from the loop-soup considered by Lemaire, Le Jan and A.-S. Sznitman as we exclude trivial loops\footnote{A (pointed) loop $(x)$ of a single vertex is called trivial.}. The reason is that trivial loops contribute nothing to the loop clusters according to our definition of open edges. \emph{Therefore, we only consider the non-trivial loops in this paper. Sometimes, we omit the word ``non-trivial'' for the simplicity of notation.}

\subsection{Known results on a subinterval of \texorpdfstring{$\mathbb{Z}$}{Z}}
\begin{defn}\label{defn: sym model on Z}
 By Model$(I,\alpha,\kappa)$, we mean the Poisson point process $\mathcal{DL}_{\alpha}$ of loops on discrete interval $I\subset\mathbb{Z}$ of intensity $\alpha\mu^{(\kappa)}$, where $\alpha,\kappa\geq 0$ are two parameters and the loop measure $\mu^{(\kappa)}$ is the push forward measure of the following pointed loop measure:
\begin{equation}
 \dot{\mu}^{(\kappa)}(\dot{\ell}=(x_1,\ldots,x_m))=\left\{
 \begin{array}{ll}
  \frac{1}{m}\left(\frac{1}{1+\kappa/2}\right)^m & \text{if }\dot{\ell}\text{ is a nearest-neighbor loop on }I,\\
  0 & \text{otherwise.}                                                        
 \end{array}\right.
\end{equation}
\end{defn}

In \cite[Section 5]{Amas-de-lacets-markoviens}, Le Jan studied Model$([1,N],\alpha,0)$ and obtained the following description of the loop clusters and represented the scaling limit by a stable subordinator. Also, Le Jan pointed out the relation among models on different discrete intervals.
\begin{thm}\label{thm: lejan}\cite[Section 5]{Amas-de-lacets-markoviens}
Consider the interval $I=[1,N]$ and $\kappa=0$.
\begin{itemize}
 \item $N=\infty$: for $\alpha\in]0,1[$, the left end points of the closed edges at time $\alpha$ form a renewal process with holding times $(W_i)_{i\geq 1}$; The generating function of $W_1$ is $1-\frac{s}{\Li_{\alpha}(s)}$ where $\Li_{\alpha}$ denotes the polylogarithm: $\forall |s|<1$, $\Li_{\alpha}(s)=\sum\limits_{k=1}^{+\infty}\frac{s^k}{k^{\alpha}}$. Set $S_n=\sum\limits_{i=1}^{n}W_{i}$ for $n\geq 1$. As $\epsilon$ tends to $0$, $(\epsilon S_{\lfloor\epsilon^{\alpha-1}t\rfloor},t\geq 0)$ converges in law towards a stable subordinator with index $1-\alpha$. For $\alpha>1$, there are only a finite number of clusters. In particular, $\mathbb{P}[S_1=\infty]=\frac{1}{\zeta(\alpha)}$.
 \item $N<\infty$: we obtain a renewal process conditioned to jump at point $N$.
\end{itemize}
\end{thm}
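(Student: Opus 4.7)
The plan is to combine the one-dimensional geometry of nearest-neighbor loops with the standard determinantal formula for loop-measure masses on finite sets, deducing first a renewal structure for closed edges and then the explicit holding-time generating function via a telescoping computation.

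The starting point is the geometric observation that every non-trivial nearest-neighbor pointed loop on $I\subset\mathbb{Z}$ has support equal to some discrete sub-interval $\{a,\ldots,b\}$ and covers precisely the edges $\{i,i+1\}$ with $a\le i<b$. Hence loop clusters are themselves discrete sub-intervals of $I$. Conditioning on $\{k,k+1\}$ being closed splits the loops of the soup into two independent Poisson processes supported in $(-\infty,k]$ and $[k+1,+\infty)$; combined with the translation invariance of $\mu^{(0)}$ on $\mathbb{Z}$, this identifies the set $\{k:\{k,k+1\}\text{ closed}\}$ as a stationary renewal process on $\mathbb{Z}$ with generic spacing $W_1$.

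The main step is to compute $\mathbb{E}[s^{W_1}]$. I would work with the renewal function $u_k:=\mathbb{P}(\{k,k+1\}\text{ closed}\mid\{0,1\}\text{ closed})=\exp(-\alpha M_k)$, where $M_k$ denotes the $\mu^{(0)}$-mass of loops with support in $\{1,2,\ldots\}$ that cover $\{k,k+1\}$. The identity $\mu^{(0)}(\supp\subset\{1,\ldots,b\})=-\log\det(I-Q_{\{1,\ldots,b\}})$ together with the classical tridiagonal evaluation $\det(I-Q_{\{1,\ldots,b\}})=(b+1)/2^{b}$ gives $\log(2^{b}/(b+1))$. A finite-difference inversion then yields $\mu^{(0)}(\supp=\{1,\ldots,l\})=\log(l^{2}/((l-1)(l+1)))$ for $l\ge2$, and telescoping over the admissible supports $\{a,\ldots,b\}$ with $1\le a\le k$ and $b\ge k+1$ collapses the double sum to $M_k=\log(k+1)$. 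Hence $u_k=(k+1)^{-\alpha}$, $\sum_{k\ge0}u_k s^k=\Li_\alpha(s)/s$, and inverting the renewal identity $\sum_k u_k s^k=(1-\mathbb{E}[s^{W_1}])^{-1}$ produces $\mathbb{E}[s^{W_1}]=1-s/\Li_\alpha(s)$. This combinatorial reduction to the polylogarithm is the main technical step; everything else is standard.

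The remaining statements follow from classical arguments. For $\alpha\in(0,1)$ the asymptotic $\Li_\alpha(s)\sim\Gamma(1-\alpha)(1-s)^{\alpha-1}$ as $s\to1^-$ gives $1-\mathbb{E}[s^{W_1}]\sim(1-s)^{1-\alpha}/\Gamma(1-\alpha)$; Karamata's Tauberian theorem then places $W_1$ in the domain of attraction of a positive stable law of index $1-\alpha$, and the functional convergence of $(\epsilon S_{\lfloor\epsilon^{\alpha-1}t\rfloor})_{t\ge0}$ to a stable subordinator follows from the standard invariance principle for sums of i.i.d.\ random variables. For $\alpha>1$, taking $s\to1^-$ in the generating function gives $\mathbb{P}(W_1<\infty)=1-1/\zeta(\alpha)$, hence $\mathbb{P}[S_1=\infty]=1/\zeta(\alpha)$ and there are only finitely many clusters almost surely. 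The finite-$N$ case is obtained by the same construction restricted to loops supported in $[1,N]$, which corresponds to conditioning the stationary $\mathbb{Z}$-renewal process to have a renewal at $N$.
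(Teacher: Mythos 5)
Your proposal is correct. Note first that the paper does not actually prove this statement: it is quoted from Le Jan's paper on Markovian loop clusters, so there is no ``paper's own proof'' here to compare against. What can be said is that your toolkit is exactly the one this paper uses for its own analogous computations, namely the determinantal mass formula (Lemma \ref{lem: restriction property}), the tridiagonal Toeplitz identity (Lemma \ref{lem: Toeplitz}), and standard renewal theory, and every step of your computation checks out: for the simple random walk the Toeplitz identity with $a=1$, $b=c=-\tfrac12$ gives the confluent root $x_1=x_2=\tfrac12$ and hence $\det(I-Q_{\{1,\ldots,b\}})=(b+1)/2^{b}$; the second difference of $b\mapsto b\log 2-\log(b+1)$ yields $\mu^{(0)}(\supp=\{1,\ldots,l\})=\log\bigl(l^{2}/((l-1)(l+1))\bigr)$; summing $\log\bigl(l^2/((l-1)(l+1))\bigr)=\bigl(\log l - \log(l-1)\bigr)-\bigl(\log(l+1)-\log l\bigr)$ over $l\ge L$ telescopes to $\log\bigl(L/(L-1)\bigr)$, and the outer sum over $a$ telescopes to $\log(k+1)$; so $u_k=(k+1)^{-\alpha}$, $\sum_{k\ge0}u_ks^k=\Li_\alpha(s)/s$, and the renewal identity gives $\mathbb{E}[s^{W_1}]=1-s/\Li_\alpha(s)$. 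The Tauberian asymptotics, the $\alpha>1$ computation via $\Li_\alpha(1)=\zeta(\alpha)$, and the finite-$N$ conditioning argument are all standard and correctly invoked.

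One remark on the route you chose: in the paper's related proofs (e.g.\ the inclusion--exclusion computations in Lemma \ref{lem: absence of L2 and L4} and Lemma \ref{lem: limit of the loops passing 1}, and in the Lemaire--Le Jan paper itself), the mass $M_k$ of loops covering a fixed edge is obtained by the three-term difference $\mu(\ell\subset\{1,\ldots,N\})-\mu(\ell\subset\{1,\ldots,k\})-\mu(\ell\subset\{k+1,\ldots,N\})$ and then $N\to\infty$. Your decomposition by exact support with a double telescoping sum is an equivalent but slightly different bookkeeping; a small advantage is that it never passes through the ill-defined expression $\mu(\ell\subset\{1,2,\ldots\})$, which for $\kappa=0$ is infinite on the half-line, so you do not need the finite-$N$ regularization step. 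Either way the answer is $M_k=\log(k+1)$.
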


Later, in \cite{Markovian-loop-clusters-on-graphs}, as an example, Lemaire and Le Jan studied the Model$(\mathbb{Z},\alpha,\kappa)$. Their result describes the law of the closed edges and the scaling limit:
\begin{thm}\label{thm: lemaire lejan}\cite[Proposition 3.1]{Markovian-loop-clusters-on-graphs}
 Set $r(\kappa)=\log\left(1+\frac{\kappa}{2}+\sqrt{\kappa+\frac{\kappa^2}{4}}\right)$.
 \begin{itemize}
  \item The midpoints of the closed edges form a renewal process. Moreover, for $n\in\mathbb{Z}$,
  \begin{equation*}
   \mathbb{P}[\{n,n+1\}\text{ is closed }]=(1-e^{-2r(\kappa)})^{\alpha},
  \end{equation*}
  \begin{equation}\label{eq: tll1}
  \mathbb{P}[\{n,n+1\}\text{ is closed }|\{0,1\}\text{ is closed }]=\frac{(1-e^{-2 r(\kappa)})^{\alpha}}{(1-e^{-2(n+1)r(\kappa)})^{\alpha}}.
  \end{equation}
  \item Assume that $\alpha\in]0,1[$. Denote by $\nu^{(\kappa)}$ the law of this renewal process, that is, the law of the distance between the left end points of two consecutive closed edges. For $\epsilon>0$, denote by $(W_{\epsilon,i})_{i\in\mathbb{N}_{+}}$ a sequence of independent random variables with distribution $\nu^{(\epsilon\kappa)}$. For every $t>0$, as $\epsilon\rightarrow 0$, the variable $\sqrt{\epsilon}\sum\limits_{i=1}^{[\epsilon^{-(1-\alpha)/2}t]}W_{\epsilon,i}$ converges in law to the value at $t$ of a subordinator with potential density $U(x,y)=\left(\frac{2\sqrt{\kappa}}{1-e^{-2|x-y|\sqrt{\kappa}}}\right)^{\alpha}$.
 \end{itemize}
\end{thm}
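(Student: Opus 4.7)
The strategy has two main threads. First, because the loop-soup is a Poisson point process of intensity $\alpha\mu^{(\kappa)}$, an edge $\{n,n+1\}$ is closed precisely when no loop in the soup uses it, so
\[
\mathbb{P}[\{n,n+1\}\text{ closed}] = \exp\bigl(-\alpha\,\mu^{(\kappa)}(\{\text{loops through }\{n,n+1\}\})\bigr).
\]
Moreover, conditioning on a set of edges being closed merely forbids loops that cross any of them, and by Poisson splitting the conditional loop-soup becomes an independent loop-soup on each connected component of $\mathbb{Z}$ with those edges removed. Combined with translation invariance of $\mu^{(\kappa)}$, this yields the renewal structure of the closed-edge positions and reduces the conditional probability (\ref{eq: tll1}) to an unconditional closed-edge probability on a half-line.

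Second, the mass of loops crossing a given edge $e$ admits the log-det representation $\log(\det(I-Q^{\setminus e})/\det(I-Q))$, where $Q^{\setminus e}$ is $Q$ with the two entries at $e$ zeroed out. By the matrix-determinant lemma applied to this rank-$2$ perturbation, it simplifies to a $2\times 2$ determinant built from the Green function $G=(I-Q)^{-1}$ at the endpoints of $e$. On $\mathbb{Z}$, $G$ satisfies the second-order recurrence $G(x+1,y)+G(x-1,y)=(2+\kappa)G(x,y)$ off the diagonal, whose characteristic roots are $e^{\pm r(\kappa)}$ (since $\cosh r(\kappa)=1+\kappa/2$), so $G(x,y)\propto e^{-r(\kappa)|x-y|}$ with the proportionality fixed by the diagonal value. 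Substituting into the $2\times 2$ determinant and simplifying yields $(1-e^{-2r(\kappa)})^\alpha$. On the half-line, the reflected Green function $G^{[1,\infty)}(x,y)=G(x,y)-G(x,-y)$ replaces $G$, and the analogous determinant telescopes via $1-\rho^2+\rho^2(1-\rho^{2n})=1-\rho^{2(n+1)}$ (with $\rho=e^{-r(\kappa)}$) to produce exactly the ratio in (\ref{eq: tll1}).

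For the scaling limit (second bullet), the renewal potential $U_\epsilon(n):=\mathbb{P}[\{n,n+1\}\text{ closed}\mid\{0,1\}\text{ closed}]$ is explicit. Since $r(\epsilon\kappa)\sim\sqrt{\epsilon\kappa}$ as $\epsilon\to 0$, for $n=\lfloor x/\sqrt{\epsilon}\rfloor$ we have $U_\epsilon(n)\sim\epsilon^{\alpha/2}\bigl(2\sqrt{\kappa}/(1-e^{-2x\sqrt{\kappa}})\bigr)^\alpha$; summing over a window of $dx/\sqrt{\epsilon}$ consecutive values and multiplying by $\epsilon^{(1-\alpha)/2}$ reproduces the announced potential density $U(0,x)=(2\sqrt{\kappa}/(1-e^{-2x\sqrt{\kappa}}))^\alpha$. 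Convergence in law of $\sqrt{\epsilon}\sum_{i=1}^{[\epsilon^{-(1-\alpha)/2}t]}W_{\epsilon,i}$ then follows by extracting the step Laplace transform from the renewal identity $\mathbb{E}[s^{W_{\epsilon,1}}]=1-(\sum_n U_\epsilon(n)s^n)^{-1}$ and verifying that $\epsilon^{-(1-\alpha)/2}(1-\mathbb{E}[e^{-\lambda\sqrt{\epsilon} W_{\epsilon,1}}])$ converges to the Laplace exponent of the target subordinator; standard results then upgrade this to process convergence.

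The main technical hurdle will be justifying the determinantal identities on the infinite graph $\mathbb{Z}$, where $\det(I-Q)$ itself is not literally defined. One handles this either by truncating to $[-N,N]$ and letting $N\to\infty$ using exponential decay of $G$, or by directly interpreting $\det(I+G\,\delta Q)$ as a finite-rank determinant (well-defined since $\delta Q$ has rank $2$). Once this is in place, the hyperbolic-function algebra for the explicit formulas and the Tauberian passage to the subordinator limit are essentially routine.
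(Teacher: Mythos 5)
This theorem is stated as a citation of Proposition~3.1 of \cite{Markovian-loop-clusters-on-graphs}; the present paper never proves it, so there is no internal proof against which to compare your attempt. Judged on its own terms, your sketch is correct and the algebra does close. Poisson splitting plus translation invariance gives the renewal structure and reduces \eqref{eq: tll1} to an unconditional closed-edge probability on a half-line. With $\rho=e^{-r(\kappa)}$, $q=1/(2+\kappa)$, the translation-invariant Green function $G(x,y)=\gamma\rho^{|x-y|}$ on $\mathbb{Z}$ has $\gamma=(1+\rho^2)/(1-\rho^2)$, from which $\det(I+G\,\delta Q)=1/(1-\rho^2)$ and hence $\mathbb{P}[e\text{ closed}]=(1-e^{-2r})^{\alpha}$; on $\{1,2,\dots\}$ the reflected Green function and your telescoping identity give $\det=(1-\rho^{2(n+1)})/(1-\rho^2)$, recovering \eqref{eq: tll1}. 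The asymptotics $r(\epsilon\kappa)\sim\sqrt{\epsilon\kappa}$ and the bookkeeping of the scales $\sqrt{\epsilon}$, $\epsilon^{-(1-\alpha)/2}$ correctly reproduce the announced potential density, and the passage to a subordinator limit via the Laplace exponent of $W_{\epsilon,1}$ is standard.

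Where your route diverges from the toolkit this paper actually deploys in its own analogous calculations (Lemmas~\ref{lem: absence of L2 and L4} and~\ref{lem: limit of the loops passing 1}): there, loop masses are extracted from the finite-restriction identity of Lemma~\ref{lem: restriction property} together with the closed-form tridiagonal and circulant Toeplitz determinants of Lemma~\ref{lem: Toeplitz}, never from a Green-function rank-$2$ perturbation. Both are equivalent, but the Toeplitz/finite-box route sidesteps the technicality you flag at the end --- giving meaning to $\det(I-Q)$ on the whole of $\mathbb{Z}$ --- because one only ever forms determinants of finite matrices and lets the box grow afterwards. If you want the tidiest rigorous version of your step~2, either adopt that finite-box inclusion--exclusion, or else observe directly that what you need is $\sum_{k\geq 1}\frac{1}{k}\bigl(\Tr Q^k-\Tr (Q^{\setminus e})^k\bigr)$, which converges absolutely and equals $\log\det_{2\times 2}(I+G\,\delta Q)$ by Sylvester's identity since $\delta Q$ has rank~$2$; no infinite determinant is then ever invoked. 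As a last side remark, Definition~\ref{defn: sym model on Z} as printed has a normalization slip (the per-step weight should be $\tfrac{1}{2+\kappa}$, not $\tfrac{1}{1+\kappa/2}$); your computation implicitly uses the correct $q=1/(2+\kappa)$, which is what makes $\cosh r(\kappa)=1+\kappa/2$ and hence the stated formulas come out.
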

\begin{rem}\label{rem: lemairelejan skorokhod}
 Although the convergence is stated for a fixed time $t$, we actually have the convergence in distribution of the finite marginals by Markov property. Moreover, by strong Markov property, they satisfy Aldous' criteria for the tightness, and so, the result could be strengthened to the convergence in Skorokhod space, see Lemma \ref{lem: tightness and skorokhod convergence towards a subordinator}.
\end{rem}

\subsection{Presentation of our results}

In this article, we consider the loop clusters in the discrete circle $G^{(n)}$. We fix some notation which will be frequently used in the sequel.
\begin{defn}\label{defn: notation}
 Set $\kappa^{(n)}\overset{\mathrm{def}}{=}\frac{1+c_n-2\sqrt{p_n(1-p_n)}}{\sqrt{p_n(1-p_n)}}$ and $r^{(n)}=\log\left(1+\frac{\kappa^{(n)}}{2}+\sqrt{\kappa^{(n)}+\frac{(\kappa^{(n)})^2}{4}}\right)$.
\end{defn}
\begin{defn}
 For a loop-soup $\mathcal{DL}$ (i.e. a Poisson point process of loops), we view it as a multiset. We write $\mathcal{DL}=\emptyset$ iff $\mathcal{DL}=0$ as a random point measure. For two loop-soups $\mathcal{DL}$ and $\mathcal{DL}'$, we write $\mathcal{DL}\cup\mathcal{DL}'$ instead of $\mathcal{DL}+\mathcal{DL}'$.
\end{defn}

We write $\mathcal{DL}_{\alpha}$ as sums of four independent Poisson point process $(\mathcal{DL}^{(n)}_{\alpha,i})_{i=1,2,3,4}$ of loops, which will be specified later in Definition \ref{defn: four type of loops}. For the present, we would like to mention that
\begin{itemize}                                                                                                                                                                                                                                                         \item $\mathcal{DL}^{(n)}_{\alpha,1}$ is $\mathcal{DL}_{\alpha}^{(n)}$ restricted on the loops avoiding the vertex $1$,
\item $\mathcal{DL}^{(n)}_{\alpha,2}\cup\mathcal{DL}^{(n)}_{\alpha,3}\cup\mathcal{DL}^{(n)}_{\alpha,4}$ are loops passing through the vertex $1$.                                                                                                                                                                                                                                                                 \end{itemize}

Our argument contains three steps:
\begin{itemize}
 \item We study the loop clusters conditionally on that $\mathcal{DL}^{(n)}_{\alpha}=\mathcal{DL}^{(n)}_{\alpha,1}$. We will use Theorem \ref{thm: lemaire lejan} (\cite[Proposition 3.1]{Markovian-loop-clusters-on-graphs}) as our starting point.
 \item We study the loop clusters conditionally on that $\mathcal{DL}^{(n)}_{\alpha,1}=\emptyset$, which does not appear in the loop cluster model on discrete intervals.
 \item By combining the results in the previous two steps, we get a full description of the loop clusters.
\end{itemize}

We start with the first step: our first observation is the following description of the loop clusters given that $\mathcal{DL}^{(n)}_{\alpha}=\mathcal{DL}^{(n)}_{\alpha,1}$ (or equivalently, $\mathcal{DL}^{(n)}_{\alpha,2}\cup\mathcal{DL}^{(n)}_{\alpha,3}\cup\mathcal{DL}^{(n)}_{\alpha,4}=\emptyset$). 
\begin{prop}\label{conditioned renewal process}
Conditionally on $\mathcal{DL}^{(n)}_{\alpha,2}\cup\mathcal{DL}^{(n)}_{\alpha,3}\cup\mathcal{DL}^{(n)}_{\alpha,4}=\emptyset$,
\begin{itemize}
 \item[a)] our model is the same as Model$([2,n],\alpha,\kappa^{(n)})$ where $\kappa^{(n)}$ is given by Definition \ref{defn: notation},
 \item[b)] \cite[Proposition 3.1]{Markovian-loop-clusters-on-graphs}, the left points of these closed edges, together with the left points of $\{0,1\}$ and $\{n,n+1\}$, form a renewal process $(S^{(\kappa^{n})}_i)_{i\geq 0}$ $(S^{(\kappa^{n})}_0=0)$ conditioned to hit $n$, where the generating function\footnote{This generating function has already been known by Lemaire and Le Jan, see the proof of \cite[Proposition 3.1]{Markovian-loop-clusters-on-graphs}.} $\Psi^{(\kappa^{(n)})}(s)=\mathbb{P}\left[s^{S^{(\kappa^{(n)})}_1-S^{(\kappa^{(n)})}_0}\right]$ of the jump distribution $S^{(\kappa^{(n)})}_1-S^{(\kappa^{(n)})}_0$ is given by
 \begin{equation}\label{eq: crp1}
  (1-\Psi^{(\kappa^{(n)})}(s))^{-1}=\sum\limits_{n\geq 0}\left(\frac{1-\exp\{-2 r^{(n)}\}}{1-\exp\{-(n+1)r^{(n)}\}}\right)^{\alpha}s^n,
 \end{equation}
 where $\kappa^{(n)}$ and $r^{(n)}$ are given in Definition \ref{defn: notation}.
\end{itemize}
\end{prop}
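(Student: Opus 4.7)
The starting point is that $\mathcal{DL}^{(n)}_{\alpha,1}, \mathcal{DL}^{(n)}_{\alpha,2}, \mathcal{DL}^{(n)}_{\alpha,3}, \mathcal{DL}^{(n)}_{\alpha,4}$ will be defined as the restrictions of the Poisson point process $\mathcal{DL}^{(n)}_{\alpha}$ to four disjoint measurable classes of loops, determined by how (if at all) the loop visits vertex $1$. By the usual independence of disjoint restrictions of a Poisson point process, the four components are independent, so conditionally on $\mathcal{DL}^{(n)}_{\alpha,2}\cup\mathcal{DL}^{(n)}_{\alpha,3}\cup\mathcal{DL}^{(n)}_{\alpha,4}=\emptyset$ the full loop-soup on $G^{(n)}$ is distributed as the unconditional Poisson point process $\mathcal{DL}^{(n)}_{\alpha,1}$, with intensity $\alpha\mu_n$ restricted to loops avoiding vertex $1$.

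For part (a), the job is to identify this restricted intensity with the loop measure of Model$([2,n],\alpha,\kappa^{(n)})$. Any loop avoiding vertex $1$ is a nearest-neighbor loop inside $\{2,\ldots,n\}$ and, being a closed path, must use equal numbers of clockwise and counter-clockwise steps. Thus the asymmetric weights pair up into geometric means and
\[
\dot{\mu}_n(\dot{\ell}=(x_1,\ldots,x_k)) = \frac{1}{k}\left(\frac{\sqrt{p_n(1-p_n)}}{1+c_n}\right)^{k}.
\]
A short algebraic check using Definition \ref{defn: notation} gives $\sqrt{p_n(1-p_n)}/(1+c_n) = 1/(2+\kappa^{(n)})$, the step weight of the symmetric nearest-neighbor walk on $[2,n]$ with killing rate $\kappa^{(n)}/2$, so the two loop measures match. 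Equivalently, this is a gauge-invariance statement: conjugating $L^{(n)}|_{\{2,\ldots,n\}}$ by the diagonal matrix $D_{x,x}=((1-p_n)/p_n)^{x/2}$ symmetrizes the generator, and since the $D$-factors telescope around any closed loop, $\dot\mu_n$ is unchanged.

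Part (b) is then a direct application of Theorem \ref{thm: lemaire lejan} (\cite[Proposition 3.1]{Markovian-loop-clusters-on-graphs}) to this equivalent interval model on $[2,n]$. The two edges at vertex $1$, namely $\{0,1\}$ and $\{n,n+1\}$ in the linearized picture, are closed by the very conditioning and contribute the boundary left endpoints $0$ and $n$ to the renewal set; the closed edges strictly inside $\{2,\ldots,n\}$ form the intermediate piece of the Lemaire--Le Jan renewal process. Together they form a renewal process on $\{0,1,\ldots,n\}$ conditioned to jump at $n$. The identity \eqref{eq: crp1} follows from the standard renewal equation $\sum_{m\geq 0} u_m s^m = (1-\Psi^{(\kappa^{(n)})}(s))^{-1}$ together with the explicit form of $u_m=\mathbb{P}[m\in\{S_i^{(\kappa^{(n)})}\}]$, which in turn is read off from the Dirichlet Green's function of the symmetric walk on $[2,n]$; the denominator $1-e^{-(m+1)r^{(n)}}$ in \eqref{eq: crp1} is precisely the Dirichlet counterpart of the infinite-volume factor $1-e^{-2r^{(n)}}$ of \eqref{eq: tll1}.

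\textbf{Main obstacle.} The only conceptually substantive point is the drift cancellation in part (a): although our circle walk is genuinely non-reversible, its loop measure depends only on the geometric mean $\sqrt{p_n(1-p_n)}$, so the loop cluster model collapses to a reversible interval model. Technically, the most delicate bookkeeping is matching the renewal-process indexing to the exact form of \eqref{eq: crp1}, i.e.\ recognizing $1-e^{-(m+1)r^{(n)}}$ as a Dirichlet Green's-function quantity on $[2,n]$ rather than a full-line quantity.
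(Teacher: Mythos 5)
Your proposal is correct and follows essentially the same route as the paper: symmetrize the walk restricted to $\{2,\ldots,n\}$ (your geometric-mean/telescoping argument is exactly the content of the Doob $h$-transform Lemma \ref{lem: Doob h transform} with $h(m)=((1-p_n)/p_n)^{m/2}$), identify the conditioned model with Model$([2,n],\alpha,\kappa^{(n)})$, and then invoke \cite[Proposition 3.1]{Markovian-loop-clusters-on-graphs} together with the renewal equation for the generating function. One small caution on your part~(b) remark: the two factors in \eqref{eq: crp1} are not an infinite-volume/Dirichlet pair on $[2,n]$ — the whole fraction is just the full-line ($\text{Model}(\mathbb{Z},\alpha,\kappa^{(n)})$) quantity $\mathbb{P}[\{m,m+1\}\text{ closed}\mid\{0,1\}\text{ closed}]$ from \eqref{eq: tll1}, read as the renewal function $u_m$ and resummed; the conditioning to hit $n$ plays no role in deriving \eqref{eq: crp1} itself.
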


It is natural to believe that the renewal processes conditioned to hit $n$, rescaled by $1/n$, as $n\rightarrow\infty$, converges to a subordinator conditioned to hit $1$. Indeed, we will prove this in the following proposition. 

\begin{prop}\label{convergence of conditioned renewal processes}
 Assume that $\alpha\in]0,1[$ and that $\lim\limits_{n\rightarrow\infty}n^2\kappa^{(n)}=\kappa$. Then, let $\left(W^{(\kappa^{(n)})}_i\right)_{i\geq 1}$ be a sequence of i.i.d. variables with the generator function $\Phi^{(\kappa^{(n)})}$ defined in Proposition \ref{conditioned renewal process}. For $m\geq 0$, let $S^{(\kappa^{(n)})}_m$ be the partial sum of $\left(W^{(\kappa^{(n)})}_i\right)_{i\geq 1}$, i.e. $S^{(\kappa^{(n)})}_m\overset{\mathrm{def}}{=}\sum\limits_{i=1}^{m}W^{(\kappa^{(n)})}_i$. Set $T_{]1,+\infty[}^{(n)}=\inf\{t\geq 0:\frac{1}{n}S^{(\kappa^{(n)})}_{\lfloor n^{1-\alpha}t\rfloor}>1\}$. Let $(X^{(\kappa)}_t)_{t\geq 0}$ be the subordinator of the potential density $U(x,y)=\left(\frac{2\sqrt{\kappa}}{1-e^{-2|x-y|\sqrt{\kappa}}}\right)^{\alpha}$ and $T_{]1,+\infty[}=\inf\{t\geq 0:X^{(\kappa)}>1\}$. Then, we have the following convergence result in the Skorokhod space.
 \begin{multline*}
  \lim\limits_{n\rightarrow\infty}\mathbb{P}\left[\left.\left(\frac{1}{n}S^{(\kappa^{(n)})}_{\lfloor n^{1-\alpha} t\rfloor}\right)_{t\in\left[0,T^{(n)}_{]1,+\infty[}\right[}\in\cdot\right|S^{(\kappa^{(n)})}\text{ hits }n\right]\\
  =\mathbb{P}\left[\left.(X_t)_{t\in\left[0,T_{]1,+\infty[}\right[}\in \cdot\right|X_{T_{]1,+\infty[}-}=1\right].
 \end{multline*}
\end{prop}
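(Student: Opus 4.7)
The strategy is to express both the discrete and the limiting conditional laws as Doob $h$-transforms, establish pointwise convergence of the corresponding harmonic functions, and combine this with the unconditional Skorokhod convergence from Remark \ref{rem: lemairelejan skorokhod}.

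For the first step, set $u_n(k) := \mathbb{P}_0[S^{(\kappa^{(n)})} \text{ hits } k]$, the renewal function, which by Proposition \ref{conditioned renewal process} is the coefficient of $s^k$ on the right-hand side of \eqref{eq: crp1}. Let $h_n(k) := u_n(n-k) = \mathbb{P}_k[S^{(\kappa^{(n)})} \text{ hits } n]$ for $0 \leq k \leq n$ and $h_n(k) := 0$ otherwise. The strong Markov property yields, for any $m \geq 0$ and any bounded measurable functional $F$ of $(S_0,\ldots,S_m)$, the identity $\mathbb{E}[F \mid S^{(\kappa^{(n)})} \text{ hits } n] = \mathbb{E}[F \cdot h_n(S^{(\kappa^{(n)})}_m)/h_n(0)]$, so the conditioned chain is the Doob $h_n$-transform of the walk killed upon crossing $n$. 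Defining $h(x) := U(x,1) = U(0,1-x)$ on $[0,1)$, the limiting conditional law is, by definition, the corresponding Doob $h$-transform of the subordinator killed on first exit from $[0,1]$. From $n^2\kappa^{(n)} \to \kappa$ one obtains $n r^{(n)} \to \sqrt{\kappa}$, and plugging this into the explicit formula for $u_n$, one checks that for each $y \in [0,1)$, $n^\alpha h_n(\lfloor ny \rfloor) \to (2\sqrt{\kappa}/(1-e^{-2(1-y)\sqrt{\kappa}}))^\alpha = h(y)$, uniformly on each compact subset of $[0,1)$.

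Next, for a fixed truncation $\delta > 0$, set $\tau^{(n,\delta)} := \inf\{t \geq 0 : S^{(\kappa^{(n)})}_{\lfloor n^{1-\alpha}t \rfloor}/n > 1-\delta\}$ and $\tau^{(\delta)} := \inf\{t \geq 0 : X^{(\kappa)}_t > 1-\delta\}$. Remark \ref{rem: lemairelejan skorokhod} yields joint Skorokhod convergence of the stopped rescaled process together with the overshoot position at $\tau^{(n,\delta)}$. Applying the $h$-transform identity at $\tau^{(n,\delta)}$ and using the uniform convergence $h_n \to h$ from the previous step on the event $\{S^{(\kappa^{(n)})}_{\tau^{(n,\delta)}}/n \leq 1\}$, the continuous mapping theorem combined with a uniform-integrability bound on the Radon--Nikodym derivative gives Skorokhod convergence of the conditioned stopped process to the $h$-transformed stopped subordinator. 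Letting $\delta \to 0$ upgrades this to convergence on the full interval $[0, T_{]1,\infty[})$, because on both sides the conditional law makes $\tau^{(\cdot, \delta)}$ increase almost surely to the crossing time of $1$ as $\delta \to 0$.

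The main obstacle is the endpoint blow-up of the harmonic function: $h(x) \sim C_\alpha(1-x)^{-\alpha}$ as $x \to 1^-$. Consequently, the Radon--Nikodym derivative $h_n(S^{(\kappa^{(n)})}_{\tau^{(n,\delta)}})/h_n(0)$ is not uniformly bounded in $n$ on the event $\{S^{(\kappa^{(n)})}_{\tau^{(n,\delta)}}/n \in (1-\delta, 1]\}$; it can reach magnitude $O(n^\alpha)$ when $S^{(\kappa^{(n)})}_{\tau^{(n,\delta)}}$ is close to $n$, so direct bounded convergence fails. Establishing the required uniform integrability is the technical crux, and may be done through sharp asymptotic estimates for $u_n$ that match the density of the continuous undershoot near $1$; alternatively, one can bypass it entirely by first establishing Skorokhod tightness of the conditional laws via Aldous' criterion and identifying the limit through convergence of finite-dimensional distributions via the $h$-transform.
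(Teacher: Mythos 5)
Your framework---Doob $h$-transform of the renewal process killed at $n$, pointwise convergence of the rescaled harmonic function $n^{\alpha}h_n(\lfloor ny\rfloor)\to U(y,1)$ uniformly on compact subsets of $[0,1[$, and a $\delta$-truncation to control the Radon--Nikodym derivative---is exactly the right one and matches the paper's strategy. Your computation of the harmonic-function limit is also correct.

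However, the specific route you spend most of the proof on (stopping at $\tau^{(n,\delta)}$, the first crossing of level $1-\delta$) creates a problem that the paper simply avoids. By stopping at a crossing time you expose the overshoot $S^{(\kappa^{(n)})}_{\tau^{(n,\delta)}}/n\in(1-\delta,1]$, and on that range the Radon--Nikodym factor $h_n(S^{(\kappa^{(n)})}_{\tau^{(n,\delta)}})/h_n(0)$ is of order $n^{\alpha}(1-S^{(\kappa^{(n)})}_{\tau^{(n,\delta)}}/n)^{-\alpha}$, which is unbounded---precisely the difficulty you diagnose. The paper never forms the Radon--Nikodym derivative at a stopping time near the boundary. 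Instead it works with \emph{fixed deterministic times} $0\le t_1\le\cdots\le t_m$ and inserts the indicator $1_{\{S^{(\kappa^{(n)})}_{\lfloor n^{1-\alpha}t_m\rfloor}/n<1-\delta\}}$; on this event $n-S^{(\kappa^{(n)})}_{\lfloor n^{1-\alpha}t_m\rfloor}>n\delta$, so the Radon--Nikodym factor $C^{(\kappa^{(n)})}\bigl(n-S^{(\kappa^{(n)})}_{\lfloor n^{1-\alpha}t_m\rfloor}\bigr)/C^{(\kappa^{(n)})}(n)$ is bounded by a constant depending only on $\delta$, and dominated convergence gives the truncated finite-dimensional convergence. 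Since the scaled conditioned processes are deterministically bounded by $1$, the finite-dimensional laws are automatically tight, and the $\delta$-truncated limits pin down the unique subsequential limit. In other words, the uniform-integrability problem you call the ``technical crux'' is not resolved in the paper---it is bypassed by never asking for the Radon--Nikodym derivative at the overshoot.

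Your fallback suggestion (Aldous' criterion plus finite-dimensional identification) is indeed the paper's route, but you omit the ingredient that makes Aldous' criterion verifiable: the renewal process conditioned to hit $n$ has \emph{exchangeable} increments, and by Kallenberg's result (\cite[proof of Theorem 16.23]{KallenbergMR1876169}) exchangeability together with finite-dimensional convergence yields tightness in the Skorokhod space. Without the exchangeability observation, checking Aldous' criterion for the conditioned process directly is not obviously straightforward. So: replace the stopping-time truncation by the fixed-time truncation, and add the exchangeability argument for tightness, and the proposal becomes the paper's proof.
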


The conditioned subordinator is a well-defined Feller process, see Lemma \ref{lem: subordinator bridge}. Our result, Proposition \ref{conditioned renewal process} together with Proposition \ref{convergence of conditioned renewal processes}, is a conditioned version of Theorem \ref{thm: lemaire lejan} (\cite[Proposition 3.1]{Markovian-loop-clusters-on-graphs}). The convergence of conditioned renewal processes is not included in Theorem \ref{thm: lemaire lejan}  (\cite[Proposition 3.1]{Markovian-loop-clusters-on-graphs}). Some additional argument is necessary, see Subsection \ref{subsect: proof of convergence of conditioned renewal processes}. Also, note that Theorem \ref{thm: lemaire lejan} (\cite[Proposition 3.1]{Markovian-loop-clusters-on-graphs}) is stated for a fixed time $t$. Here, we state the convergence result in Skorokhod space. The reason is that the finite marginal convergence does not imply a convergence result for general clusters. (For instance, if we split the biggest cluster into two clusters by adding a closed edge in the middle of that cluster, then we still have the same limit for finite marginals with a different limit for the clusters.) On the other hand, the Skorokhod convergence in Proposition \ref{convergence of conditioned renewal processes} does imply the convergence of the macroscopic jumps. In other words, it implies the convergence of the loop clusters in the following sense:

\begin{cor}\label{cor: cvg of clusters interval}
 Let $\bar{\mathcal{R}}$ be the closure of the range of $(Y^{(\kappa)}_t)_{t\in[0,\zeta[}$ where
 $$\mathbb{P}[(Y^{(\kappa)}_t)_{t\in[0,\zeta[}\in\cdot]\overset{\mathrm{def}}{=}\mathbb{P}[(X_t)_{t\in[0,T_{]1,+\infty[}-[}\in \cdot|X_{T_{]1,+\infty[}-}=1].$$
 Then, its complementary consists of countably many open intervals. We list them in the decreasing order according to the lengths: $(g_1,d_1),(g_2,d_2),\ldots$ ($g_1-d_1\geq g_2-d_2 \geq \cdots$). Similarly, conditionally on $\mathcal{DL}^{(n)}_{\alpha,2}\cup\mathcal{DL}^{(n)}_{\alpha,3}\cup\mathcal{DL}^{(n)}_{\alpha,4}=\emptyset$, the discrete circle $G^{(n)}$ is divided into several discrete arcs\footnote{It is possible that some discrete arcs are actually single vertices. For example, if $\{1,2\}$ and $\{2,3\}$ are both closed, then the vertex $2$ is considered to be a discrete arc $[2,2]$.} by closed edges. We list them in decreasing order according to the lengths: $[g^{(n)}_1,d^{(n)}_1],[g^{(n)}_2,d^{(n)}_2],\ldots,[g^{(n)}_{k_n},d^{(n)}_{k_n}]$ where $k_n=\#\mathcal{C}^{(n)}_{\alpha}$ is the number of the discrete arcs. Assume that $\alpha\in]0,1[$ and that $\lim\limits_{n\rightarrow\infty}n^2\kappa^{(n)}=\kappa$. Then, for each $k\geq 1$, conditionally on $\mathcal{DL}^{(n)}_{\alpha,2}\cup\mathcal{DL}^{(n)}_{\alpha,3}\cup\mathcal{DL}^{(n)}_{\alpha,4}=\emptyset$, the random variable $\frac{k_n}{n^{1-\alpha}}$ converges in distribution to the time duration $\zeta$ of the process $Y^{(\kappa)}$ and
 $$\frac{1}{n}(g^{(n)}_1,d^{(n)}_1,g^{(n)}_2,d^{(n)}_2,\ldots,g^{(n)}_k,d^{(n)}_k)\text{ converges in distribution to }(g_1,d_1,g_2,d_2,\ldots,g_k,d_k).$$
 Equivalently, as $n\rightarrow\infty$, the compact set $\frac{1}{n}\left([1,n]\setminus\bigcup\limits_{i}]g^{(n)}_i,d^{(n)}_i[\right)$ converges in law to $\bar{\mathcal{R}}$, with respect to the Hausdorff distance between compact sets.
\end{cor}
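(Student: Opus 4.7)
The strategy is to deduce the corollary from the Skorokhod convergence in Proposition \ref{convergence of conditioned renewal processes} by continuous mapping arguments on the jump structure. The key identification is that the arcs $[g^{(n)}_i, d^{(n)}_i]$ are in bijection with the jumps of the renewal process: if $0 = S^{(\kappa^{(n)})}_0 < S^{(\kappa^{(n)})}_1 < \cdots < S^{(\kappa^{(n)})}_{k_n} = n$ denote the left endpoints of the closed edges (including the boundary edges $\{0,1\}$ and $\{n,n+1\}$), then the $i$-th arc has length $S^{(\kappa^{(n)})}_i - S^{(\kappa^{(n)})}_{i-1}$, which is exactly the size of the $i$-th jump of $\bigl(\frac{1}{n}S^{(\kappa^{(n)})}_{\lfloor n^{1-\alpha}t\rfloor}\bigr)_t$. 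Symmetrically, the ordered open intervals $(g_i,d_i)$ in $[0,1]\setminus\bar{\mathcal{R}}$ are precisely the (size-ranked) jumps of the conditioned subordinator $Y^{(\kappa)}$.

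I would then invoke a standard continuity property of the Skorokhod topology: the map sending a càdlàg path to the times and sizes of its $k$ largest jumps is continuous at any path whose $k$-th and $(k+1)$-st largest jumps differ in size. By Lemma \ref{lem: subordinator bridge}, $Y^{(\kappa)}$ has a Lévy-type jump structure driven by a diffuse Lévy measure, so almost surely all its jumps have distinct sizes, and the map is almost surely continuous at $Y^{(\kappa)}$. Applying the continuous mapping theorem to Proposition \ref{convergence of conditioned renewal processes} yields
\[
\tfrac{1}{n}(g^{(n)}_1,d^{(n)}_1,\ldots,g^{(n)}_k,d^{(n)}_k)\xrightarrow{d}(g_1,d_1,\ldots,g_k,d_k)
\]
conditionally on $\mathcal{DL}^{(n)}_{\alpha,2}\cup\mathcal{DL}^{(n)}_{\alpha,3}\cup\mathcal{DL}^{(n)}_{\alpha,4}=\emptyset$.

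For $k_n/n^{1-\alpha}\to\zeta$ in distribution, I would note that this quantity differs from $T^{(n)}_{]1,+\infty[}$ by at most $n^{-(1-\alpha)}$, and under the conditioning the walk hits $n$ exactly, so $T^{(n)}_{]1,+\infty[}$ coincides with the first passage time to $1$ of the rescaled walk. The Skorokhod convergence to the bridge $Y^{(\kappa)}$, whose lifetime $\zeta=T_{]1,+\infty[}$ is almost surely an increase point and not an approach-from-above instant, gives the convergence of lifetimes. The Hausdorff-distance statement then follows from the second one: given the convergence of the top-$k$ rescaled gaps for every $k$, and the fact that $\bar{\mathcal{R}}$ has Lebesgue measure zero for $\alpha\in\,]0,1[\,$, the residual uncovered length beyond the top $k$ gaps tends to $0$ uniformly in $n$ as $k\to\infty$; this controls the two directions of the Hausdorff distance between $\tfrac{1}{n}\bigl([1,n]\setminus\bigcup_i ]g^{(n)}_i,d^{(n)}_i[\bigr)$ and $\bar{\mathcal{R}}$.

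The main obstacle is making the continuous-mapping arguments rigorous across the endpoint singularity: the convergence in Proposition \ref{convergence of conditioned renewal processes} holds only on the open interval $[0,T^{(n)}_{]1,+\infty[}\,[$, so one has to treat the final jump (which lands exactly at $1$ in the limit) by completing the conditioned paths using the bridge description in Lemma \ref{lem: subordinator bridge}. Concretely, one would extend both sides by appending the overshoot, verify that the extended paths still converge in the Skorokhod topology on a compact time interval containing $\zeta$ in its interior, and only then apply the jump-continuity and first-passage-time functionals.
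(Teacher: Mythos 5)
The paper does not provide a separate proof of this corollary: it simply asserts, in the paragraph preceding the statement, that the Skorokhod convergence in Proposition~\ref{convergence of conditioned renewal processes} ``does imply the convergence of the macroscopic jumps'' and leaves the details implicit. Your proposal is a correct and careful expansion of exactly that assertion: you identify the discrete arcs with the jumps of the conditioned renewal process, invoke the a.s.\ continuity in the Skorokhod topology of the top-$k$ jump functional (valid because the L\'evy measure of $X^{(\kappa)}$ is diffuse, hence the jump sizes of $Y^{(\kappa)}$ are a.s.\ distinct), derive the cluster-count limit from the lifetime, and deduce the Hausdorff statement from the ranked gaps plus the zero-drift/measure-zero range of the subordinator. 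The only point worth stressing, which you already flag, is that the Skorokhod convergence in Proposition~\ref{convergence of conditioned renewal processes} is of paths on the random time interval $[0,T^{(n)}_{]1,+\infty[}[$; one should read this as convergence in the Skorokhod space of explosive c\`adl\`ag paths (so the lifetime is part of the data), and then the jump functional and the lifetime functional are both a.s.\ continuous at $Y^{(\kappa)}$. With that understanding your argument is complete and matches the route the paper takes.
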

 We give the density of the time duration $\zeta$ of $Y^{(\kappa)}$ (or the limit distribution of $n^{\alpha-1}k_n$), by using the density of the semi-group of the subordinator $X^{(\kappa)}$ in the following remark.
\begin{rem}
 Denote by $P^{(\kappa)}_t(x,\mathrm{d}y)$ the semi-group of the subordinator $(X^{(\kappa)}_t)_{t\geq 0}$ with potential density $U(x,y)=\left(\frac{2\sqrt{\kappa}}{1-e^{-2|x-y|\sqrt{\kappa}}}\right)^{\alpha}$. By Fourier analysis, we can show that $P_t^{(\kappa)}(x,\mathrm{d}y)$ has a density $p_t^{(\kappa)}(x,y)$ with respect to the Lebesgue measure. Moreover, $p_t^{(\kappa)}(x,y)$ is jointly continuous in $(t,x,y)$. Later, we will see in Lemma \ref{lem: subordinator bridge} that $Y^{(\kappa)}$ is a Doob's harmonic transform of $X^{(\kappa)}$ and that the semi-group $Q^{(\kappa)}_t(x,\mathrm{d}y)$ of $Y^{(\kappa)}$ has the following form,
 \begin{equation}
  Q^{(\kappa)}_t(x,\mathrm{d}y)=\frac{U(y,1)}{U(x,1)}P^{(\kappa)}_t(x,\mathrm{d}y).
 \end{equation}
 Immediately, we see that $Q^{(\kappa)}_t(x,\mathrm{d}y)$ has a density $q_t^{(\kappa)}(x,y)=\frac{U(y,1)}{U(x,1)}p_t^{(\kappa)}(x,y)$ with respect to the Lebesgue measure such that $(t,x,y)\rightarrow q_t^{(\kappa)}(x,y)$ is jointly continuous. By semi-group property,
 $$\mathbb{P}[\zeta>t]=\int Q^{(\kappa)}_t(0,\mathrm{d}y)=\int\frac{U(y,1)}{U(0,1)}p_t^{(\kappa)}(0,y)=\frac{1}{U(0,1)}\int\limits_{t}^{\infty}p_s^{(\kappa)}(0,1)\,\mathrm{d}s.$$
 Thus, the density of $\zeta$ with respect to the Lebesgue measure is exactly $\frac{p_t^{(\kappa)}(0,1)}{U(0,1)}$.
\end{rem}
\bigskip
Next, we study the loops passing through the vertex $1$ conditionally on that $\mathcal{DL}^{(n)}_{\alpha,1}=\emptyset$. We need some notation to represent the cluster formed by the loops passing through the vertex $1$.
\begin{defn}
 If $\mathcal{DL}^{(n)}_{\alpha,1}=\emptyset$ and if there exist at least two clusters, then there exist two end points of the discrete loop cluster containing $1$. We denote by $J_n$ the graph distance, inside the loop cluster containing $1$, between $1$ and the left end point, and by $K_n$ the distance between $1$ and the right end point. Necessarily, $J_n+K_n\leq n-2$.
\end{defn}
For example, if the left end point is $n-1$ and the right end point is $4$, then $J_n=2$ and $K_n=3$. We give an explicit description of the loop clusters conditionally on that $\mathcal{DL}^{(n)}_{\alpha,1}=\emptyset$:
\begin{prop}\label{loops through 1}
 \begin{align*}
  \mathbb{P}[\exists \geq 2\text{ loop clusters}|\mathcal{DL}^{(n)}_{\alpha,1}=\emptyset]=&2^{\alpha}\left(\frac{\cosh (nr^{(n)})-\cosh\left(\frac{1}{2}n\log\left(\frac{p_n}{1-p_n}\right)\right)}{\sinh (nr^{(n)})}\right)^{\alpha}\\
  &\times\left(\sum\limits_{m=1}^{n-1}\left(\frac{\sinh (mr^{(n)})}{\sinh (nr^{(n)})}\right)^{\alpha}-\left(\frac{\sinh ((m-1)r^{(n)})}{\sinh ((n-1)r^{(n)})}\right)^{\alpha}\right).
 \end{align*}
 For two non-negative integers $m$ and $M$ such that $m+M\leq n-2$, we have that
 \begin{multline*}
  \mathbb{P}[\exists\geq 2\text{ loop clusters}, J_n\leq m,K_n\leq M|\mathcal{DL}^{(n)}_{\alpha,1}=\emptyset]\\=2^{\alpha}\left(\frac{\cosh (nr^{(n)})-\cosh\left(\frac{1}{2}n\log\left(\frac{p_n}{1-p_n}\right)\right)}{\sinh (nr^{(n)})}\right)^{\alpha}\\
  \times\left(\frac{\sinh ((m+1)r^{(n)})\sinh ((M+1)r^{(n)})}{\sinh ((m+M+2)r^{(n)})}\right)^{\alpha}.
 \end{multline*}

 Suppose that $\lim\limits_{n\rightarrow\infty}n^2\kappa^{(n)}=\kappa\geq 0$ and that $\lim\limits_{n\rightarrow\infty}n^2c_n=\epsilon\in[0,\kappa/2]$. Then,
 \begin{multline*}
  \lim\limits_{n\rightarrow\infty}\mathbb{P}[\exists \geq 2\text{ loop clusters}|\mathcal{DL}^{(n)}_{\alpha,1}=\emptyset]\\
  =2^{\alpha}\cdot\alpha\sqrt{\kappa}\frac{(\cosh(\sqrt{\kappa})-\cosh(\sqrt{\kappa-2\epsilon}))^{\alpha}}{(\sinh\sqrt{\kappa})^{2\alpha+1}}\int\limits_{0}^{1}\left(\sinh(a\sqrt{\kappa})\right)^{\alpha-1}\left(\sinh((1-a)\sqrt{\kappa})\right)^{\alpha+1}\,\mathrm{d}a.
 \end{multline*}
 For $a,b\geq 0$ such that $a+b\leq 1$, we have that
 \begin{multline*}
  \mathbb{P}\left[\left.\exists\geq 2\text{ loop clusters}, \frac{J_n}{n}\leq a,\frac{K_n}{n}\leq b\right|\mathcal{DL}^{(n)}_{\alpha,1}=\emptyset\right]\\
  =2^{\alpha}\left(\frac{\cosh\sqrt{\kappa}-\cosh\sqrt{\kappa-2\epsilon}}{\sinh\sqrt{\kappa}}\right)^{\alpha}\left(\frac{\sinh (a\sqrt{\kappa})\sinh (b\sqrt{\kappa})}{\sinh ((a+b)\sqrt{\kappa})}\right)^{\alpha}.
 \end{multline*}
\end{prop}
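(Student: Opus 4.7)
The plan is to exploit the Poisson structure conditional on $\mathcal{DL}^{(n)}_{\alpha,1}=\emptyset$: the surviving loop-soup is a Poisson point process with intensity $\alpha\mu_n^{(1)}$ of loops through vertex $1$, every open edge is covered by such a loop, and the cluster containing $1$ is precisely the union of their vertex sets. For $m+M\le n-2$, the event $\{\exists\ge 2,\,J_n\le m,\,K_n\le M\}$ therefore coincides with $\{\text{every loop through }1\text{ lies in the arc }A=\{1-m,\ldots,1+M\}\}$. By the exponential formula for Poisson point processes and the classical identity $\mu_X(\text{loops through }1)=\log G_X(1,1)$ with $G_X:=(I-Q^{(n)}_X)^{-1}$,
\[
\mathbb{P}\bigl[\exists\ge 2,\,J_n\le m,\,K_n\le M\mid\mathcal{DL}^{(n)}_{\alpha,1}=\emptyset\bigr]=\bigl(G_A(1,1)/G^{(n)}(1,1)\bigr)^\alpha.
\]

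The heart of the proof is the explicit computation of these Green's functions. The key algebraic input, read off from Definition~\ref{defn: notation}, is $\cosh r^{(n)}=(1+c_n)/(2\sqrt{p_n(1-p_n)})$; equivalently $p_n z^2-(1+c_n)z+(1-p_n)=0$ has roots $\rho\,e^{\pm r^{(n)}}$ with $\rho=\sqrt{(1-p_n)/p_n}$. Hence $x\mapsto\rho^x\sinh(xr^{(n)})$ and $x\mapsto\rho^x\sinh((N+1-x)r^{(n)})$ span $\ker(I-Q^{(n)})$ on any interval, and the discrete Wronskian formula yields
\[
G_A(1,1)=\frac{2\cosh r^{(n)}\,\sinh((m+1)r^{(n)})\sinh((M+1)r^{(n)})}{\sinh r^{(n)}\,\sinh((m+M+2)r^{(n)})}.
\]
On the circle, $Q^{(n)}$ is circulant with eigenvalues $(p_n\omega^k+(1-p_n)\omega^{-k})/(1+c_n)$, $\omega=e^{2\pi i/n}$; factoring each eigenvalue factor of $I-Q^{(n)}$ through the quadratic roots and applying $\prod_{k=0}^{n-1}(\omega^k-\lambda)=(-1)^n(\lambda^n-1)$ produces, with $s=\tfrac12\log(p_n/(1-p_n))$,
\[
\det(I-Q^{(n)})=2\bigl(p_n(1-p_n)/(1+c_n)^2\bigr)^{n/2}\bigl(\cosh(nr^{(n)})-\cosh(ns)\bigr).
\]
Combined with the analogous Chebyshev-type identity for $\det(I-Q^{(n)}_{[2,n]})$ (via $\rho$-symmetrization) and the Schur complement $G^{(n)}(1,1)=\det(I-Q^{(n)}_{[2,n]})/\det(I-Q^{(n)})$, the ratio collapses to the second stated identity.

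For the first identity I marginalize by telescoping in $K_n$: writing $\tilde F(a,b):=\mathbb{P}[J_n\le a-1,K_n\le b-1,\exists\ge 2]$ for the joint formula above,
\[
\mathbb{P}[J_n=j,\,\exists\ge 2]=\tilde F(j+1,n-1-j)-\tilde F(j,n-1-j),
\]
and summing over $j=0,\ldots,n-2$ produces the first formula; the three-term hyperbolic identity $\sinh(mr)\sinh((n-1)r)-\sinh((m-1)r)\sinh(nr)=\sinh(r)\sinh((n-m)r)$ controls the individual summands. For the asymptotics, setting $p_n=\tfrac12+\delta_n$ in the definition of $\kappa^{(n)}$ gives $\kappa^{(n)}\sim 4\delta_n^2+2c_n$, hence $n\delta_n\to\tfrac12\sqrt{\kappa-2\epsilon}$ (forcing $\epsilon\le\kappa/2$); consequently $nr^{(n)}\to\sqrt\kappa$ and $\tfrac n2\log(p_n/(1-p_n))\to\sqrt{\kappa-2\epsilon}$, which handle the prefactor and the finite-$(m,M)$ limit by continuity of $\sinh$. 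The integral limit follows from a Riemann-sum argument: with $a=m/n$, expanding $u^\alpha-v^\alpha=\alpha u^{\alpha-1}(u-v)+o(u-v)$ and substituting the three-term identity reduces the $m$-th summand to $\tfrac{\alpha\sqrt\kappa}{n}\sinh(a\sqrt\kappa)^{\alpha-1}\sinh((1-a)\sqrt\kappa)^{\alpha+1}/\sinh(\sqrt\kappa)^{\alpha+1}+o(1/n)$, which Riemann-sums to the stated integral. The main technical obstacle is the Fourier computation of $\det(I-Q^{(n)})$ on the circle and its clean collapse to the $\cosh(nr^{(n)})-\cosh(ns)$ form; the remaining work is bookkeeping for the telescoping sum and standard Riemann-sum convergence.
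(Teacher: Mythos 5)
Your approach is correct and takes a genuinely different route from the paper. The paper decomposes the through-$1$ loops into three independent Poisson sub-soups, $\mathcal{DL}^{(n)}_{\alpha,2}$ (nonzero rotation number), $\mathcal{DL}^{(n)}_{\alpha,3}$ (zero rotation, lifted to $\mathbb{Z}$), and $\mathcal{DL}^{(n)}_{\alpha,4}$, computes $\mathbb{P}[\mathcal{DL}^{(n)}_{\alpha,2}\cup\mathcal{DL}^{(n)}_{\alpha,4}=\emptyset]$ via Lemma~\ref{lem: absence of L2 and L4}, and handles $\mathcal{DL}^{(n)}_{\alpha,3}$ via the lift to $\mathbb{Z}$ and Lemma~\ref{lem: limit of the loops passing 1}, then multiplies. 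You instead observe directly that, conditionally on $\mathcal{DL}^{(n)}_{\alpha,1}=\emptyset$, the cluster of $1$ is the union of the vertex sets of the through-$1$ loops, so $\{\exists\geq 2,\ J_n\leq m,\ K_n\leq M\}$ is exactly the event that every such loop stays in the arc $A$, and the exponential formula plus $\mu(1\in\ell)=\log (I-Q)^{-1}_{11}$ reduce everything to the single ratio $(G_A(1,1)/G^{(n)}(1,1))^\alpha$. Both routes ultimately invoke Lemma~\ref{lem: restriction property} and the Toeplitz/circulant determinant Lemma~\ref{lem: Toeplitz}, and your values of $G_A(1,1)$ and $\det(I-Q^{(n)})$ (via the Fourier factorization through $\rho e^{\pm r^{(n)}}$) are correct, reproducing the second displayed identity. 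Your route is more compact for this proposition in isolation; the paper's splitting into $\mathcal{DL}^{(n)}_{\alpha,2},\mathcal{DL}^{(n)}_{\alpha,3},\mathcal{DL}^{(n)}_{\alpha,4}$ is chosen because that decomposition (in particular the lift of $\mathcal{DL}^{(n)}_{\alpha,3}$ and the variables $A_n,B_n$) is reused in Section~5 to glue the cluster through $1$ onto the conditioned renewal range.

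One detail worth flagging: your telescoping over $J_n=j$ gives, after re-indexing $m=j+1$,
\begin{equation*}
2^\alpha\Big(\tfrac{\cosh(nr^{(n)})-\cosh(ns)}{\sinh(nr^{(n)})}\Big)^\alpha\sum_{m=1}^{n-1}\Big[\Big(\tfrac{\sinh(mr^{(n)})\sinh((n-m)r^{(n)})}{\sinh(nr^{(n)})}\Big)^\alpha-\Big(\tfrac{\sinh((m-1)r^{(n)})\sinh((n-m)r^{(n)})}{\sinh((n-1)r^{(n)})}\Big)^\alpha\Big],
\end{equation*}
i.e.\ each summand carries an extra factor $(\sinh((n-m)r^{(n)}))^\alpha$ relative to the proposition's first display. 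This is exactly what your own Riemann-sum asymptotics require in order to produce the $(\sinh((1-a)\sqrt\kappa))^{\alpha+1}$ in the stated limit; without that factor the exponent would be $1$, not $\alpha+1$. So the proposition's first formula as printed appears to contain a typo, and your telescoping yields the consistent (correct) version rather than literally ``the first formula''. You should state the corrected summand explicitly rather than claiming to reproduce the printed expression. Also be sure to note (as you implicitly use) that the Riemann-sum argument needs a mild extra word at the endpoints $a\in\{0,1\}$ where $(\sinh(a\sqrt\kappa))^{\alpha-1}$ blows up for $\alpha<1$ but remains integrable.
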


We have obtained the description of the partition of loop clusters $\Pi(\mathcal{DL}_{\alpha,1}^{(n)},G^{(n)})$ formed by the Loop-soup $\mathcal{DL}_{\alpha,1}^{(n)}$ avoiding $1$, and the loop cluster $CL^{(n)}$ by the loop-soup $\mathcal{DL}_{\alpha}^{(n)}\setminus\mathcal{DL}_{\alpha,1}^{(n)}$ of loops intersecting $1$. Then, the loop clusters $\Pi(\mathcal{DL}_{\alpha}^{(n)},G^{(n)})$ formed by the loop-soup $\mathcal{DL}_{\alpha}^{(n)}$ is determined as follows:
$$\Pi(\mathcal{DL}_{\alpha}^{(n)},G^{(n)})=\left\{CL^{(n)}\cup\bigcup\limits_{\substack{P\in \Pi(\mathcal{DL}_{\alpha,1}^{(n)},G^{(n)})\\P\cap CL^{(n)}\neq\emptyset}}P\right\}\cup\{P: P\in \Pi(\mathcal{DL}_{\alpha,1}^{(n)},G^{(n)}), P\cap CL^{(n)}=\emptyset\}.$$
We give the scaling limit in the following theorem.

\begin{thm}\label{thm: limit distribution of loop cluster}
 Suppose that $\lim\limits_{n\rightarrow\infty}n^2\kappa^{(n)}=\kappa$ where $\kappa^{(n)}\overset{\mathrm{def}}{=}\frac{1+c_n-2\sqrt{p_n(1-p_n)}}{\sqrt{p_n(1-p_n)}}$. Suppose that $\lim\limits_{n\rightarrow\infty}n^2c_n=\epsilon\in[0,\kappa/2]$. Let $\mathcal{C}^{(n)}_{\alpha}$ be the partition given by loop clusters on discrete circle which is defined in the introduction. If $\mathcal{C}^{(n)}_{\alpha}$ is not a single partition, then there exist $G_n\geq 0$ and $D_n\geq 0$ such that $D_n+G_n<n$ and that 
$$\{-G_n+n+1,\ldots,n,1,\ldots,1+D_n\}$$
is the cluster containing $1$. In this case, let $$1+D_n=S_0^{(n)}<S_1^{(n)}<\cdots<S_{k(n)}^{(n)}=n-G_n$$
be all the left end points of the closed edges. Define the scaled process by
$$\tilde{S}^{(n)}_t=\frac{1}{n-1-G_n-D_n}(S^{(n)}_{\lfloor (n-1-G_n-D_n)^{1-\alpha}t\rfloor}-S^{(n)}_0).$$
Let $(G,D)$ be a pair of variables with the following density
$$1_{\{x,y>0,x+y<1\}}\frac{\sin(\alpha\pi)}{\pi}\frac{2^{\alpha-2}(1-\alpha)\kappa\sinh\sqrt{\kappa}}{\sinh(\sqrt{\kappa}(1-\alpha))\left[\sinh(\sqrt{\kappa}(1-x-y))\right]^{\alpha}\left[\sinh(\sqrt{\kappa}(x+y))\right]^{2-\alpha}}.$$
Let $Y^{(\kappa)}$ be a conditioned subordinator described in Lemma \ref{lem: subordinator bridge}.  
\begin{itemize}
 \item[a)] For $\alpha\geq 1$, we have that $\lim\limits_{n\rightarrow\infty}\mathbb{P}[\mathcal{C}^{(n)}_{\alpha}\text{ is a single partition}]=1$. For $\alpha\in]0,1[$, we have that
\begin{multline*}
\lim\limits_{n\rightarrow\infty}\mathbb{P}[\mathcal{C}^{(n)}_{\alpha}\text{ is not a single partition}]\\
=\frac{1}{\sinh\sqrt{\kappa}}2^{\alpha}\sinh(\sqrt{\kappa}(1-\alpha))(\cosh(\sqrt{\kappa})-\cosh(\sqrt{\kappa-2\epsilon}))^{\alpha}.
\end{multline*}
 \item[b)] Fix $\alpha\in]0,1[$. Conditionally on that $\mathcal{C}^{(n)}_{\alpha}$ is not a single partition, $(\frac{G_n}{n},\frac{D_n}{n},\tilde{S}^{(n)})$ converges in distribution to $(G,D,M)$. Conditionally on $(G,D)$, the process $M$ has the same distribution as $Y^{(\kappa(1-G-D)^{2})}$. In particular, similar to Corollary \ref{cor: cvg of clusters interval}, this implies the convergence of $\Pi(\mathcal{DL}_{\alpha}^{(n)},G^{(n)})$. To reduce the amount of notation, we state the convergence result in an equivalent way by considering the closed edges: Let $\mathcal{S}^{(n)}=\{S^{(n)}_0,\ldots,S^{(n)}_{k(n)}\}$ be the set of the left end points of the closed edges on $G^{(n)}$. Then, $\frac{1}{n}\mathcal{S}^{(n)}\subset [0,1]$. Let $\bar{\mathcal{R}}(M)$ be the closure of the range of the process $M$, then $G+(1-G-D)\bar{\mathcal{R}}(M)$ is a compact subset of $[0,1]$. We equip the space $\mathcal{K}[0,1]$ of compact subsets of $[0,1]$ with the Hausdorff metric. Then, as $n\rightarrow\infty$, conditionally on that $\mathcal{C}^{(n)}_{\alpha}$ is not a single partition, $\frac{1}{n}\mathcal{S}^{(n)}$ converges in law to $G+(1-G-D)\bar{\mathcal{R}}(M)$.
\end{itemize}
\end{thm}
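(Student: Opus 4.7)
The plan rests on the independence decomposition $\mathcal{DL}_\alpha^{(n)}=\mathcal{DL}_{\alpha,1}^{(n)}\cup\mathcal{DL}_{\alpha,234}^{(n)}$. Let $CL^{(n)}=\{n+1-\tilde{G}_n,\ldots,n,1,\ldots,1+\tilde{D}_n\}$ denote the cluster of $\mathcal{DL}_{\alpha,234}^{(n)}$ (the loops through $1$) containing vertex $1$. By independence of the two Poisson families, the law of $(\tilde{G}_n,\tilde{D}_n)$ coincides with the conditional law of $(J_n,K_n)$ computed in Proposition~\ref{loops through 1}. The crucial reduction is that any edge $\{k,k+1\}$ with an endpoint outside $CL^{(n)}$ cannot be covered by any loop in $\mathcal{DL}_{\alpha,234}^{(n)}$, so its status in $\mathcal{DL}_\alpha^{(n)}$ is determined by $\mathcal{DL}_{\alpha,1}^{(n)}$ alone. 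By Proposition~\ref{conditioned renewal process}(a), the closed edges of $\mathcal{DL}_{\alpha,1}^{(n)}$ form the Lemaire--Le Jan renewal process on $[1,n]$ with jump law $\nu^{(\kappa^{(n)})}$ pinned at $1$ and $n$. Hence the closed edges of $\mathcal{DL}_\alpha^{(n)}$ are exactly those renewal times that lie in $[1+\tilde{D}_n,n-\tilde{G}_n]$, and the final cluster boundaries $(G_n,D_n)$ arise as the overshoot of this pinned renewal process at $1+\tilde{D}_n$ and the undershoot at $n-\tilde{G}_n$.

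For part~(a) with $\alpha\geq 1$, I would use a first-moment bound: a fixed edge is closed with probability $\leq(1-e^{-2r^{(n)}})^\alpha\sim(2\sqrt{\kappa})^\alpha n^{-\alpha}$ by Theorem~\ref{thm: lemaire lejan}, giving $\mathbb{P}[\text{not single}]\leq O(n^{1-\alpha})\to 0$; the borderline case $\alpha=1$ requires a slight refinement exploiting the $(\cosh\sqrt{\kappa}-\cosh\sqrt{\kappa-2\epsilon})^\alpha$ suppression contributed by the loops through $1$. For $\alpha\in(0,1)$ I decompose
\begin{equation*}
\mathbb{P}[\text{not single}]=\sum_{g_0+d_0<n-1}\mathbb{P}[\tilde{G}_n=g_0,\tilde{D}_n=d_0]\,\mathbb{P}\bigl[\text{the pinned renewal process has a point in }[1+d_0,n-g_0]\bigr],
\end{equation*}
reading the first factor off Proposition~\ref{loops through 1} and the second off the generating function~\eqref{eq: crp1}. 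Riemann-sum passage to the limit using $nr^{(n)}\to\sqrt{\kappa}$ and $\tfrac{n}{2}\log(p_n/(1-p_n))\to\sqrt{\kappa-2\epsilon}$ yields the stated $\sinh$-formula.

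For part~(b), conditional on $(G_n,D_n)=(g,d)$, the renewal times of the Lemaire--Le Jan process inside $[1+d,n-g]$ form a pinned renewal process with jump law $\nu^{(\kappa^{(n)})}$ on an interval of length $n-1-g-d$. Rescaling by this length makes the effective killing parameter $(n-1-g-d)^2\kappa^{(n)}\to\kappa(1-G-D)^2$, and Proposition~\ref{convergence of conditioned renewal processes} then identifies the Skorokhod limit of $\tilde{S}^{(n)}$ with $Y^{(\kappa(1-G-D)^2)}$. The joint density of $(G,D)$ is obtained by convolving the Proposition~\ref{loops through 1} density of $(\tilde{G}_n,\tilde{D}_n)$ with the joint overshoot--undershoot distribution of the pinned renewal process at its two boundary points and taking the scaling limit. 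The factor $\sin(\alpha\pi)/\pi$ in the stated density is the signature of this overshoot, arising from the reflection formula applied to the polylogarithm/stable-subordinator tail of $\nu^{(\kappa^{(n)})}$ (cf.~Theorem~\ref{thm: lejan}), while the $\sinh$-factors and the $(1-\alpha)$ normalisation trace back to the explicit formulas of Propositions~\ref{loops through 1} and~\ref{conditioned renewal process}.

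The main obstacle is the joint-density computation in part~(b). One has to simultaneously track the macroscopic scale of $CL^{(n)}$ and the joint overshoot/undershoot law of the pinned renewal process at its two boundaries, and produce the precise combination $\sinh(\sqrt{\kappa}(1-\alpha))\,\sinh^\alpha(\sqrt{\kappa}(1-x-y))\,\sinh^{2-\alpha}(\sqrt{\kappa}(x+y))$ in the limiting density. This requires uniform tail control on $\nu^{(\kappa^{(n)})}$ interpolating between Le Jan's polylogarithm formula for $\kappa=0$ (Theorem~\ref{thm: lejan}) and the general $\kappa>0$ case via the $\sinh$-transform, together with careful bookkeeping of the boundary convolution.
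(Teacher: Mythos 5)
Your overall decomposition --- loops through $1$ vs.\ loops avoiding $1$, with $(G_n,D_n)$ arising as the overshoot/undershoot of the pinned renewal process (whose limit is the subordinator bridge $Y^{(\kappa)}$ from Lemma~\ref{lem: subordinator bridge}) at the endpoints of the cluster $CL^{(n)}$ of loops through $1$, and an application of Proposition~\ref{convergence of conditioned renewal processes} to the inner process conditioned on $(G_n,D_n)$ --- does match the route the paper takes through Lemma~\ref{lem: the cluster at 0}. But there are genuine gaps. For part~(a) with $\alpha\geq 1$: a first-moment count of closed edges gives $O(n^{1-\alpha})$, which at $\alpha=1$ is only $O(1)$, and your proposed refinement via the $(\cosh\sqrt{\kappa}-\cosh\sqrt{\kappa-2\epsilon})^\alpha$ factor cannot rescue this, since that factor is an $O(1)$ constant, not a vanishing one. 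The paper instead uses monotone coupling of Poisson loop soups: $\mathbb{P}[\#\mathcal{C}^{(n)}_\alpha=1]$ is non-decreasing in $\alpha$, so for $\alpha\geq 1$ one upper-bounds $\lim_n\mathbb{P}[\text{not single}]$ by its value at any $\alpha'<1$ and then lets $\alpha'\uparrow 1$, where the explicit $\sinh(\sqrt{\kappa}(1-\alpha'))$ factor forces the limit to zero. This is cleaner and, unlike the first-moment route, works at $\alpha=1$.

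Your ``crucial reduction'' is correct at the discrete level, but to pass it to the limit you also need that the rescaled cluster endpoints $B$ and $1-A$ almost surely lie \emph{outside} the closure of the range of $Y^{(\kappa)}$ (part~6 of Lemma~\ref{lem: subordinator bridge}, whose role is explained in Section~\ref{subsect: connection}); otherwise the discrete overshoot/undershoot at $1+\tilde D_n$ and $n-\tilde G_n$ need not converge to the continuous overshoot/undershoot, and the identification of the limit breaks. You do not address this. Finally, the density computation for $(G,D)$ --- which you correctly flag as the main obstacle --- is in fact the bulk of the argument: the paper first inverts the Laplace exponent to get the explicit L\'{e}vy measure (Lemma~\ref{lem: levy measure}), then derives the overshoot law $\mathbb{P}^0[X^{(\kappa)}_{T_{]a,\infty[}}\in\mathrm{d}x]$ (Lemma~\ref{lem:distribution of XTa}) and the joint overshoot/undershoot law of the bridge (Lemma~\ref{lem:YTa}), and only then integrates against the density of $(A,B)$ from Lemma~\ref{lem: limit of the loops passing 1}; the $\sin(\alpha\pi)/\pi$ does come from Euler's reflection formula as you guess, but two nontrivial changes of variables and a Beta-function identity are needed to land on the stated $\sinh$-density and the $\sinh(\sqrt{\kappa}(1-\alpha))$ normalization. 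Naming the expected ingredients without carrying out these steps leaves the central calculation unproved.
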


\subsection{Connection with known results, difficulties and techniques}\label{subsect: connection}

Let $\sigma^{(n)}$ be an independent uniform random permutation of $\{1,\ldots,n\}$. Denote by $\sigma^{(n)}(\mathcal{C}_{\alpha}^{(n)})$ the permuted partition of $\{1,\ldots,n\}$ such that two vertices $x,y$ belong to the same cluster of the partition $\sigma^{(n)}(\mathcal{C}_{\alpha}^{(n)})$ iff $\sigma^{-1}(x),\sigma^{-1}(y)$ belong to the same cluster of $\mathcal{C}_{\alpha}^{(n)}$. Conditionally on $\mathcal{DL}_{\alpha}^{(n)}=\mathcal{DL}_{\alpha,1}^{(n)}$ (i.e. no loop passes through $1$), $\sigma^{(n)}(\mathcal{C}_{\alpha}^{(n)})$ is a Gibbs partition\footnote{See \cite[Equations (1.47) (1.48), Section 1.5]{PitmanMR2245368} for a precise definition.}: for each particular partition $\{A_1,\ldots,A_k\}$ of $\{1,\ldots,n\}$,
 \begin{equation}
  \mathbb{P}\left[\sigma^{(n)}(\mathcal{C}_{\alpha}^{(n)})=\{A_1,\ldots,A_k\}\Big|\mathcal{DL}_{\alpha}^{(n)}=\mathcal{DL}_{\alpha,1}^{(n)}\right]=\frac{v_k\prod\limits_{i=1}^{k}\left(w_{\# A_i}\# A_i!\right)}{B_n}
 \end{equation}
 where $v_k=k!$ for $k=1,2,3,\ldots$, $w=(w_i)_i$ is the jumping distribution of the renewal process given by Equation \eqref{eq: crp1} and $B_n$ is the normalizing constant such that $B_n/n!$ equals to Equation \eqref{eq: tll1} with $\kappa=\kappa^{(n)}=\frac{1+c_n-2\sqrt{p_n(1-p_n)}}{\sqrt{p_n(1-p_n)}}$.

 For a consistent family of Gibbs partitions, (more generally, for a consistent family of exchangeable partitions), one has the almost surely convergence of the normalized sizes of equivalence classes, which is known as Kingman's representation theorem, see for example \cite[Theorem 2.2]{PitmanMR2245368}. For $\kappa=0$, conditionally on $\mathcal{DL}_{\alpha}^{(n)}=\mathcal{DL}_{\alpha,1}^{(n)}$, our family of permuted partitions form a consistent family of Gibbs partition which is driven by a mixture of $1-\alpha$ stable subordinator bridge, see \cite[Theorem 4.6]{PitmanMR2245368}. However, for $\kappa\neq 0$, conditionally on $\mathcal{DL}_{\alpha}^{(n)}=\mathcal{DL}_{\alpha,1}^{(n)}$, our family of permuted partitions $\sigma^{(n)}(\mathcal{C}_{\alpha}^{(n)})$ is not consistent. (One can argue this by using \cite[Theorem 4.6]{PitmanMR2245368}.)

 There exists convergence results of Gibbs partitions for non-consistent family of Gibbs partitions, see for example \cite[Theorem 2.4,Theorem 2.5]{PitmanMR2245368} with the references. However, they put an assumption that the sequence $w$ does not depend on $n$. From our point of view, they put this condition to get a convergence towards a subordinator bridge by applying a local limit theorem of I. A. Ibragimov and Y. V. Linnik \cite[Chapter 4]{IbragimovLinnikMR0322926}. That local limit theorem, stated for distributions in the attraction domain of some stable distribution, is not applicable in our situation (as our limit distribution is not stable, see Proposition \ref{convergence of conditioned renewal processes}). Rather than establishing a local limit theorem for our case, we prove the convergence of the conditioned renewal processes by the convergence of renewal process in \cite[Proposition 3.1]{Markovian-loop-clusters-on-graphs}. Also, we would like to mention a general result of O. Kallenberg \cite[Theorem 16.23]{KallenbergMR1876169}\footnote{The result first appeared in his paper \cite{KallenbergMR0394842}.} on the convergence of discrete exchangeable processes towards an exchangeable process on $[0,1]$. Kallenberg formulated an equivalence condition for convergence in Skorokhod space $D[0,1]$. However, for our loop model, roughly speaking, that condition requires the convergence of macroscopic clusters which needs to be proven.
 
 For $\kappa>0$, our limit partition is driven by a subordinator different from the stable $1-\alpha$ subordinator, which appears in \cite[Theorem 2.5]{PitmanMR2245368} (and corresponds to the case $\kappa=0$). However, our scaling function $n^{1-\alpha}$ has the same form as in \cite[Theorem 2.5]{PitmanMR2245368}. More precisely, under the assumption that $\lim\limits_{n\rightarrow\infty}n^2\kappa^{(n)}=\kappa\in[0,\infty[$, the scaling limit of the partition depends on the limit $\kappa$, but the scaling function $n^{1-\alpha}$ doesn't. We would like to briefly explain the reason as follows: suppose we have two sequences of models $(Model_n)_n$ and $(\widetilde{Model}_n)_n$ on $(G^{(n)})_n$ with parameters $(\kappa^{(n)})_n$ and $(\tilde{\kappa}^{(n)})_n$ respectively. Let us suppose further that                                                                                                                                                                                                                                                                                                                                                                                                                                                                                                                                                                                                                                                                                                                                                                                                                                                                                                                                                            $$\forall n\geq 1, \kappa^{(n)}<\tilde{\kappa}^{(n)}\text{ and } \lim\limits_{n\rightarrow\infty}n^2\kappa^{(n)}<\lim\limits_{n\rightarrow\infty}n^2\tilde{\kappa}^{(n)}.$$
 From the construction of loop-soup as a Poisson point process, the loop-soup $\mathcal{DL}_{\alpha}^{(n)}$ in the model $Model_n$ can be constructed from the loop-soup $\widetilde{\mathcal{DL}}_{\alpha}^{(n)}$ in the model $\widetilde{Model}_n$ by adding an additional independent Poisson point process. The intensity measure is equal to the difference $\alpha\mu^{(n)}-\alpha\tilde{\mu}^{(n)}$, where $\mu^{(n)}$ and $\tilde{\mu}^{(n)}$ are intensity measures of $\mathcal{DL}_{\alpha}^{(n)}$ and $\widetilde{\mathcal{DL}}_{\alpha}^{(n)}$ respectively. Under our assumption, $\alpha\mu^{(n)}-\alpha\tilde{\mu}^{(n)}$ are uniformly bounded for all $n$. Thus, $\#(\widetilde{\mathcal{DL}}_{\alpha}^{(n)}\setminus\mathcal{DL}_{\alpha}^{(n)})$ is a Poisson random variable with uniformly bounded expectation. Moreover, with high probability, the loops inside $\widetilde{\mathcal{DL}}_{\alpha}^{(n)}\setminus\mathcal{DL}_{\alpha}^{(n)}$ are macroscopic loops away from $1$. Consequently, $\inf\limits_{n\geq 1}\mathbb{P}[Model_n=\widetilde{Model}_n]>0$. Also, we have the same scaling function for different possible limit $\lim\limits_{n\rightarrow\infty}n^2\kappa^{(n)}$. The same idea shows that the convergence result remains the same if we perturb the killing parameter $\kappa^{(n)}$ (by changing $p_n,c_n$) up to order $o(n^{-2})$. 
 
\bigskip

 Finally, we briefly present the difficulties and the techniques. To prove Proposition \ref{convergence of conditioned renewal processes}, we would like to use the convergence result \cite[Proposition 3.1]{Markovian-loop-clusters-on-graphs} of the renewal processes. The conditioned subordinator in Proposition \ref{convergence of conditioned renewal processes} is well-defined by Doob's $h$-transform in Lemma \ref{lem: subordinator bridge}. The difficulty of proving the convergence is due to the divergence of the Radon-Nikodym derivatives between the conditioned renewal processes and the renewal processes. However, we have the convergence of the conditional expectations of the Radon-Nikodym derivatives on some sub-$\sigma$-fields. As a result, we get a unique candidate for possible finite marginal limit distributions. Then, we get the convergence of finite marginal distributions. (Note that the tightness of the finite marginal distributions follows from the boundedness of the scaled processes.) To get a Skorokhod convergence, we need the tightness of the family of conditioned renewal processes. By the exchangeability (due to the connection with the conditioned renewal process in Proposition \ref{conditioned renewal process}), as an application of Aldous' criteria of tightness \cite[Theorem 16.11]{KallenbergMR1876169}, the finite marginals convergence implies the tightness, see the proof of \cite[Theorem 16.23]{KallenbergMR1876169}. Next, we consider the loops passing through the vertex $1$ which are not too large to cover the whole space. This cluster might cover some edges which are not covered by the loops avoiding $1$. Accordingly, we erase a part of the range of the conditioned subordinator which is the limit of the edges uncovered by loops avoiding $1$. Then, the remaining part of the range of the subordinator represents the closed edges in the scaling limit. For this part, the key is the independence between the loops avoiding $1$ and those loops passing through $1$ which is guaranteed by the Poisson loop-soup construction. To make it rigorous, we need the fact that the end points of the cluster formed by the loop-soup $\mathcal{DL}_{\alpha}^{(n)}\setminus\mathcal{DL}_{\alpha,1}^{(n)}$ through $1$, fall into the interior of some loop clusters formed by $\mathcal{DL}_{\alpha,1}^{(n)}$, with probability tending to $1$ as $n\rightarrow\infty$. This is guaranteed by $6$th part of Lemma \ref{lem: subordinator bridge} and the independence between $\mathcal{DL}_{\alpha}^{(n)}\setminus\mathcal{DL}_{\alpha,1}^{(n)}$ and $\mathcal{DL}_{\alpha,1}^{(n)}$. Finally, to express the results explicitly, we calculate the L\'{e}vy measure of the subordinator in Lemma \ref{lem: levy measure} by inversing Laplace transform which is unknown before this paper.

\subsection{Organization of the paper}
We would like to present the organization of the following sections:

In Section 2, we collect some useful facts on (non-trivial) loop measures by Lemaire and Le Jan, such as the restriction properties of the loop measures (Lemma \ref{lem: restriction property} and Lemma \ref{lem: several closed edges}) and the invariance under Doob's $h$-transform (Lemma \ref{lem: Doob h transform}). Also, we provide a classical result on the determinant of Toeplitz matrices (Lemma \ref{lem: Toeplitz}).

In Section 3, we prove Proposition \ref{conditioned renewal process} and Proposition \ref{convergence of conditioned renewal processes}, results of loop clusters conditioned on the absence of loops through $1$. We identify the closed edges as a renewal process conditioned to jump to $n$. Then, we give a convergence result of the conditioned renewal processes towards a conditioned subordinator.

In Section 4, we give the proof of Proposition \ref{loops through 1}, a full description of the loop clusters formed by loops through $1$, together with a limit result.

In Section 5, by combining the results in Section 3 and 4, we prove Theorem \ref{thm: limit distribution of loop cluster} of the limit distribution of the loop clusters on $G^{(n)}$ under certain conditions on the parameters.

In Section 6, we present an informal relation with Brownian loop clusters on the circle $\mathbb{S}^1$: several limit results can be predicted by Brownian loop clusters.

We postpone several proofs in the $7$-th and the last section.

\section{Useful facts}\label{sec: useful facts}
In this section, we collect some useful properties which are frequently used throughout the paper. Although we are interested in a class of special loop measures on discrete circles, we will state these properties for a general class of loop measures. For the loop measures associated with reversible Markovian chains, these results are already known by Lemaire, Le Jan, Sznitman, \ldots. These results also hold in the non-reversible case, for example, the loop-soup considered in the present paper.

Let's begin with a precise description of the loop measure. In this section, we will consider the (non-trivial) pointed loop measure associated with a discrete Markovian generator\footnote{See \cite[Definition 2.1]{yinshan} for a precise definition.} $L$ on a countable state space $S$: 
\begin{equation}\label{eq: defn gplm}
 \dot{\mu}(\dot{\ell}=(x_1,\ldots,x_k))=\frac{1}{k}Q^{x_1}_{x_2}\cdots Q^{x_k}_{x_1}\text{ for }k\geq 2, x_1,\ldots,x_k\in S 
\end{equation}
where $Q^x_y\overset{\mathrm{def}}{=}\left\{\begin{array}{ll}
-\frac{L^x_y}{L^x_x} & \text{ if }x\neq y\\
0 & \text{ if }x=y.
\end{array}\right.$
The corresponding (non-trivial) loop measure $\mu$ is the push-forward measure of $\dot{\mu}$. As we have emphasized in Subsection \ref{subsect: basic settings}, we only consider the non-trivial loops and we will omit the word ``non-trivial'' for the simplicity of notation. 

We will be interested in the loops which fulfill certain special requirements:
\begin{defn}\label{defn: inclusion exclusion vertices}[Inclusion/exclusion property, vertex set version]
Let $F$ be a subset of the state space $S$ and $\ell$ a loop on $S$. We say that $\ell$ is inside $F$ if $\ell$ does not visit any state in $S\setminus F$, denote it by $\ell\subset F$. We say that $\ell$ avoids $F$ if $\ell\subset F^c$, which is denoted by $\ell\cap F=\emptyset$. For some state $x$, we say that $\ell$ visits $x$, denoted it by $x\in\ell$, if $\ell$ doesn't avoid $\{x\}$.
\end{defn}
If we consider $S$ as a vertex set and we put directed edges between each pair $x,y\in S$, then we get a directed graph. It is natural to extend Definition \ref{defn: inclusion exclusion vertices} to an edge subset $F$.
\begin{defn}\label{defn: inclusion exclusion edges}[Inclusion/exclusion property, edge set version]
Let $F\subset S\times S$ and $\ell$ a loop on $S$. We say that $\ell=(x_1,\ldots,x_k)$ is inside $F$, which is denoted by $\ell\subset F$, if $(x_1,x_2),\ldots,(x_{k-1},x_k),(x_k,x_1)\in F$. We say that $\ell$ avoids $F$ if $\ell\subset F^c$, which is denoted by $\ell\cap F=\emptyset$.
\end{defn}

\begin{lem}\label{lem: restriction property}
 Let $\mu$ be the Markovian loop measure associated with a generator $L$ on a state space $S$. Let $F$ be a finite subset of the state space $S$. Then, $\mu(\ell\text{ is non-trivial },\ell\subset F,\mathrm{d}\ell)$ is the Markovian loop measure associated with the generator $L|_{F\times F}$. Moreover,
 $$\mu(\ell\text{ is non-trivial and }\ell\subset F)=-\log\det(-L|_{F\times F})+\sum\limits_{x\in F}\log(-L)^x_x$$
 with the convention that $-\log 0=+\infty$ and that the determinant of an empty matrix is $1$.
\end{lem}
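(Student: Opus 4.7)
The plan is to reduce both statements to a standard trace--log--determinant identity. For the first claim, observe that the transition kernel $Q|_{F\times F}$ obtained from the restricted generator $L|_{F\times F}$ has, for $x,y\in F$ with $x\neq y$, entry $-L^x_y/L^x_x$ identical to $Q^x_y$ read off from $L$ itself, since $(L|_{F\times F})^x_y=L^x_y$ and $(L|_{F\times F})^x_x=L^x_x$ for $x,y\in F$. A pointed loop $\dot{\ell}=(x_1,\ldots,x_k)$ satisfies $\dot{\ell}\subset F$ iff all $x_i\in F$, and on such pointed loops the weight $\frac{1}{k}Q^{x_1}_{x_2}\cdots Q^{x_k}_{x_1}$ defined from $L$ coincides with the weight defined from $L|_{F\times F}$. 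Pushing forward under the rotation-equivalence quotient, $\mu(\,\cdot\,,\ell\subset F)$ equals the Markovian loop measure of $L|_{F\times F}$.

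For the total-mass formula, by definition of the push-forward $\mu(\ell\text{ non-trivial},\ell\subset F)=\dot{\mu}(\dot{\ell}\text{ non-trivial},\dot{\ell}\subset F)$, so
\begin{equation*}
 \mu(\ell\text{ non-trivial},\ell\subset F)=\sum_{k\geq 2}\frac{1}{k}\sum_{x_1,\ldots,x_k\in F}Q^{x_1}_{x_2}\cdots Q^{x_k}_{x_1}=\sum_{k\geq 1}\frac{1}{k}\Tr\bigl((Q|_{F\times F})^k\bigr),
\end{equation*}
the $k=1$ contribution being null because $Q^x_x=0$. Provided the spectral radius of the non-negative matrix $Q|_{F\times F}$ is strictly less than $1$, the series equals $-\log\det(I-Q|_{F\times F})$ via the standard identity $\sum_{k\geq 1}k^{-1}\Tr(A^k)=-\log\det(I-A)$.

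Writing $D$ for the diagonal matrix on $F$ with entries $D^x_x=-L^x_x>0$, one checks directly that $I-Q|_{F\times F}=D^{-1}(-L|_{F\times F})$ (diagonal entries $1$, off-diagonal entries $L^x_y/L^x_x$ on both sides), so
\begin{equation*}
 \det(I-Q|_{F\times F})=\frac{\det(-L|_{F\times F})}{\prod_{x\in F}(-L^x_x)},
\end{equation*}
and taking $-\log$ yields the announced formula. The only delicate point is the convergence hypothesis: when $\det(-L|_{F\times F})=0$, the non-negative matrix $Q|_{F\times F}$ admits $1$ as a Perron--Frobenius eigenvalue, arbitrarily long loops contribute mass bounded below, the series diverges, and both sides of the equation equal $+\infty$ under the stated conventions. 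In the strictly subcritical case the identity is a purely algebraic manipulation, so the main (mild) obstacle is carefully handling this critical case via the conventions $-\log 0=+\infty$.
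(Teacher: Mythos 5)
Your proof takes essentially the same route as the paper: reduce the total mass to $\sum_{k\geq 1}k^{-1}\Tr\bigl((Q|_{F\times F})^k\bigr)$, apply the trace--log--determinant identity in the subcritical case, and convert $\det(I-Q|_{F\times F})$ into $\det(-L|_{F\times F})/\prod_{x\in F}(-L^x_x)$. The one small difference is the treatment of the degenerate case $\det(-L|_{F\times F})=0$: the paper perturbs to $L-\epsilon\,\mathrm{Id}$, for which the identity holds, and then sends $\epsilon\downarrow 0$ by monotone convergence, whereas you argue directly that the trace series diverges; this works, though a fully precise version of your step should note that a substochastic $Q|_{F\times F}$ with spectral radius $1$ contains a recurrent communicating class along which the diagonal entries of $(Q|_{F\times F})^k$ stay bounded away from zero (up to periodicity), so that the nonnegative series $\sum_k k^{-1}\Tr\bigl((Q|_{F\times F})^k\bigr)$ indeed diverges.
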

\begin{proof}
One can deduce from \eqref{eq: defn gplm} that $\mu(\ell\subset F,\mathrm{d}\ell)$ equals the Markovian loop measure associated with the generator $L|_{F\times F}$. Hence, it remains to show that for a Markovian loop measure $\mu$ associated with the generator $L$ on a finite state space $S$,
\begin{equation}\label{eq: lrp1}
 \mu(\text{non-trivial loops})=-\log\det(-L)+\sum\limits_{x\in S}\log(-L)^x_x.
\end{equation}
By \eqref{eq: defn gplm}, we see that
\begin{equation}\label{eq: lrp2}
\mu(\text{non-trivial loops})=\dot{\mu}(\text{non-trivial pointed loops})=\sum\limits_{k\geq 2}\frac{1}{k}\Tr Q^k
\end{equation}
where
$$Q^x_y\overset{\mathrm{def}}{=}\left\{\begin{array}{ll}
-\frac{L^x_y}{L^x_x} & \text{ if }x\neq y\\
0 & \text{ if }x=y.
\end{array}\right.$$
Since $\Tr Q=0$, we have $\eqref{eq: lrp2}=\sum\limits_{k\geq 1}\frac{1}{k}\Tr Q^{k}$. It suffices to prove Equation \eqref{eq: lrp1} for a matrix $Q$ with a spectral radius strictly less than $1$. For general cases, we consider the loop measure $\mu_{\epsilon}$ associated with $L-\epsilon\cdot Id$, where $Id$ is the identity matrix. Then, Equation \eqref{eq: lrp1} holds for $\mu_{\epsilon}$ and $L-\epsilon\cdot Id$. By taking $\epsilon\downarrow 0$, we get Equation \eqref{eq: lrp1} in the limit. Henceforth, we assume that the eigenvalues $(\lambda_j)_j$ of $Q$ (counted by algebraic multiplicity) are strictly less than $1$. Then, we calculate $\sum\limits_{k\geq 1}\frac{1}{k}\Tr Q^{k}$ by using the eigenvalues:
\begin{equation*}
\sum\limits_{k\geq 1}\frac{1}{k}(\lambda_j)^k=\sum\limits_{j}-\log(1-\lambda_j)=-\log\det(I-Q)=-\log\det(-L)+\sum\limits_{x\in S}\log(-L^x_x).\qedhere
\end{equation*}
\end{proof}

One can deduce the following result from the definition of pointed loop measure. A more general form is hinted in \cite[Exercise 10, Section 2.3]{loop}.

\begin{lem}\label{lem: several closed edges}
 Given a subset $F\subset S\times S$ and a Markovian generator on $S$, we define a modified Markovian generator $\tilde{L}^{F}$ as follows: for two states $x,y\in S$,
 \begin{equation}
 (\tilde{L}^F)^x_y=\left\{
 \begin{array}{ll}
  0 & \text{ if }(x,y)\in F,\\
  L^x_y & \text{ otherwise.}
 \end{array}\right.
 \end{equation}
 Let $\tilde{\mu}$ be the non-trivial pointed loop measure associated with $\tilde{L}$ by Equation \eqref{eq: defn gplm}. Then,
 \begin{equation}
  \mu(\ell\cap F=\emptyset,\mathrm{d}\ell)=\tilde{\mu}(\mathrm{d}\ell).
 \end{equation}
\end{lem}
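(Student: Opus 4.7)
The plan is a direct verification from the defining formula \eqref{eq: defn gplm}, exploiting the fact that zeroing off-diagonal entries of $L$ translates into zeroing the corresponding entries of $Q$ without touching the rest.

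First I would write out the transition kernel associated with $\tilde{L}^F$. In the relevant setting $F$ is an edge set, so $(x,x)\notin F$ for every $x$, and the diagonal of $\tilde{L}^F$ agrees with that of $L$. For $x\neq y$ this immediately gives
$$\tilde{Q}^{x}_{y} = -\frac{(\tilde{L}^F)^{x}_{y}}{(\tilde{L}^F)^{x}_{x}} = Q^{x}_{y}\cdot 1_{\{(x,y)\notin F\}}.$$
Plugging this into \eqref{eq: defn gplm}, for any pointed loop $\dot{\ell}=(x_1,\ldots,x_k)$ with $k\geq 2$ (indices taken cyclically modulo $k$),
$$\dot{\tilde{\mu}}(\dot{\ell}) \;=\; \frac{1}{k}\prod_{i=1}^{k} \tilde{Q}^{x_i}_{x_{i+1}} \;=\; \left(\frac{1}{k}\prod_{i=1}^{k} Q^{x_i}_{x_{i+1}}\right)\prod_{i=1}^{k} 1_{\{(x_i,x_{i+1})\notin F\}}.$$
The product of indicators equals $1$ precisely when none of the consecutive directed edges of $\dot{\ell}$ lies in $F$, i.e.\ when $\dot{\ell}\cap F=\emptyset$ in the sense of Definition \ref{defn: inclusion exclusion edges}; otherwise it vanishes. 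Hence $\dot{\tilde{\mu}}(\mathrm{d}\dot{\ell}) = 1_{\{\dot{\ell}\cap F=\emptyset\}}\,\dot{\mu}(\mathrm{d}\dot{\ell})$.

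To finish, I push the identity forward under the equivalence relation identifying cyclic rotates of a pointed loop. The edge condition $\{\ell\cap F=\emptyset\}$ is invariant under cyclic rotation of the vertex sequence (the cyclic tuple of consecutive edges is preserved), so it descends unambiguously to the loop space, yielding $\mu(\ell\cap F=\emptyset,\,\mathrm{d}\ell) = \tilde{\mu}(\mathrm{d}\ell)$. There is no genuine obstacle; the only point requiring any care is the observation that $F$ is purely off-diagonal, which guarantees $(\tilde{L}^F)^{x}_{x}=L^{x}_{x}\neq 0$ and makes the quotient defining $\tilde{Q}^{x}_{y}$ well posed.
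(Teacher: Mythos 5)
Your proof is correct and is exactly the kind of direct verification the paper has in mind (it states no proof, only that the result can be deduced from the definition of the pointed loop measure). The observation that $F$ must be off-diagonal for $\tilde{Q}$ to be well-defined, and the remark that the event $\{\dot\ell\cap F=\emptyset\}$ is rotation-invariant so the identity descends correctly under the push-forward to unrooted loops, are the only two points needing care and you address both.
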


As we have seen in Lemma \ref{lem: restriction property} and Lemma \ref{lem: several closed edges}, several interesting quantities are related to the determinants of some matrices. For that reason, we state a classical result on the determinants. Please refer to Proposition 2.2 and Example 2.8 in \cite{BottcherMR2179973}.
\begin{lem}[\cite{BottcherMR2179973}] \label{lem: Toeplitz}
Let $T_{3,n}$ be the $n\times n$ tri-diagonal Toeplitz matrix and $S_{n}$ the circulant $n\times n$ matrix such that
$$T_{3,n}=\begin{bmatrix}
a & b & 0 & \cdots & 0\\
c & a & b & \ddots& \vdots\\
0  & \ddots & \ddots & \ddots& 0 \\
 \vdots  & \ddots & c& a& b\\
 0  &\cdots  &  0  & c & a\\
\end{bmatrix}_{n\times n}\text{ and } S_{n}=\begin{bmatrix}
a & b & 0 &  & c\\
c & a & b & \ddots& \\
0  & \ddots & \ddots & \ddots& 0 \\
   & \ddots & c& a& b\\
 b  & &  0  & c & a\\
\end{bmatrix}_{n\times n}.$$

Let $x_1,x_2$ be the roots of $x^2-ax+bc=0$. Then,
\begin{itemize}
\item $\displaystyle{\det(T_{3,n})=\frac{x_1^{n+1}-x_2^{n+1}}{x_1-x_2}}$ for $n\geq 1$,
\item $\det(S_n)=x_1^n+x_2^n+(-1)^{n+1}(b^n+c^n)$ for $n\geq 3$.
\end{itemize}
\end{lem}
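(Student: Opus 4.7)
The plan is to treat the two determinant formulas separately using standard techniques for structured matrices.

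For the tridiagonal determinant $D_n := \det(T_{3,n})$, I would expand along the first row. The only non-zero entries in that row are $a$ at position $(1,1)$ and $b$ at position $(1,2)$, so
\begin{equation*}
D_n = a \det(M_{11}) - b \det(M_{12}),
\end{equation*}
where $M_{11} = T_{3,n-1}$ and $M_{12}$ is an $(n{-}1)\times(n{-}1)$ matrix whose first column is $(c,0,\ldots,0)^T$ and whose lower-right block is $T_{3,n-2}$. Expanding $M_{12}$ along its first column gives $\det(M_{12}) = c\,\det(T_{3,n-2})$, whence $D_n = a D_{n-1} - bc\, D_{n-2}$ for $n \geq 2$, with initial data $D_0 = 1$ and $D_1 = a$. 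The characteristic polynomial of this linear recurrence is exactly $x^2 - ax + bc$, whose roots are $x_1, x_2$; solving with the initial data yields $D_n = (x_1^{n+1} - x_2^{n+1})/(x_1 - x_2)$, with the confluent case $x_1 = x_2$ recovered by continuity.

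For $S_n$, I would observe that the matrix is genuinely circulant: its first row is $(a, b, 0, \ldots, 0, c)$ and every subsequent row is a cyclic right-shift of the previous one. Equivalently, $S_n = aI + bP + cP^{-1}$, where $P$ denotes the cyclic-shift permutation matrix, whose eigenvalues are the $n$-th roots of unity $\omega^k$ with $\omega = e^{2\pi i / n}$, $k = 0, \ldots, n-1$. Consequently,
\begin{equation*}
\det(S_n) = \prod_{k=0}^{n-1}\bigl(a + b\omega^k + c\omega^{-k}\bigr).
\end{equation*}
Since $x_1 + x_2 = a$ and $x_1 x_2 = bc$, each factor rewrites as $(b\omega^k + x_1)(b\omega^k + x_2)/(b\omega^k)$. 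Using the identity $\prod_{k=0}^{n-1}(z - \omega^k) = z^n - 1$ one computes $\prod_k (b\omega^k + x_j) = x_j^n + (-1)^{n+1} b^n$ for $j = 1,2$, while $\prod_k b\omega^k = (-1)^{n-1} b^n$. Multiplying through and invoking $(x_1 x_2)^n = b^n c^n$ collapses several cross terms, leaving $\det(S_n) = x_1^n + x_2^n + (-1)^{n+1}(b^n + c^n)$.

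The main bookkeeping obstacle lies in the circulant computation: one must track the sign $(-1)^{n-1}$ coming from $\prod_k \omega^k$ and handle the degenerate situations ($x_1 = x_2$, i.e.\ $a^2 = 4bc$; or $b = 0$ or $c = 0$) by continuity of the determinant in the matrix entries. Neither step is deep, but the algebraic simplification in the final step — in which the factor $b^n$ cancels against $\prod_k b\omega^k$ up to sign, and $b^n c^n = (x_1 x_2)^n$ folds into the final symmetric expression — is the only slightly subtle point.
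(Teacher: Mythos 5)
Your proof is correct. The paper itself does not prove Lemma \ref{lem: Toeplitz}; it simply cites Proposition 2.2 and Example 2.8 of \cite{BottcherMR2179973}, so there is no in-paper argument to compare against. Your proposal is a clean, self-contained derivation along entirely standard lines: the three-term recurrence $D_n = aD_{n-1} - bcD_{n-2}$ with $D_0=1$, $D_1=a$ for the tridiagonal case, and the spectral decomposition $S_n = aI + bP + cP^{-1}$ with $P$ the cyclic shift for the circulant case. I verified the key algebra: writing $a + b\omega^k + c\omega^{-k} = (b\omega^k + x_1)(b\omega^k + x_2)/(b\omega^k)$ does hold since $x_1+x_2=a$, $x_1x_2=bc$; the product $\prod_{k=0}^{n-1}(b\omega^k + x_j) = x_j^n + (-1)^{n+1}b^n$ follows from $\prod_k(z-\omega^k)=z^n-1$; the denominator $\prod_k b\omega^k = (-1)^{n-1}b^n$ is correct because $\omega^{n(n-1)/2} = e^{\pi i(n-1)} = (-1)^{n-1}$; and expanding the numerator, using $(x_1x_2)^n=b^nc^n$, and dividing by $(-1)^{n+1}b^n$ yields exactly $x_1^n+x_2^n+(-1)^{n+1}(b^n+c^n)$. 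You are also right to flag the degenerate cases $b=0$, $c=0$, or $x_1=x_2$: the circulant factorization requires $b\ne 0$ and the final formula requires $x_1\ne x_2$, but both sides are polynomials in $(a,b,c)$ so continuity (or direct substitution) dispatches these. The only small thing I would add explicitly is the observation that $(-1)^{n-1}=(-1)^{n+1}$, which is what makes the final sign bookkeeping collapse so cleanly.
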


Next, we state another useful property of the loop measure: it is ``invariant'' under Doob's harmonic transform. Lemaire and Le Jan have already observed and stated this in the first half part of \cite[Remark 1.1]{Markovian-loop-clusters-on-graphs}. We state it here without the assumption of reversibility of the Markovian generator for the convenience of readers. The proof is immediate from the definition of the loop measure.
\begin{lem}\cite[Remark 1.1]{Markovian-loop-clusters-on-graphs}\label{lem: Doob h transform}
 Suppose that $h:S\rightarrow ]0,\infty[$ is a function on a finite state space $S$ such that $-Lh\geq 0$. Then, $L^{h}$, the Doob's harmonic transformation of $L$, induces the same loop measure, where \begin{equation}
  (L^{h})^x_y\overset{\mathrm{def}}{=}\frac{L^x_yh(y)}{h(x)}\text{ for }x,y\in S.
 \end{equation}
\end{lem}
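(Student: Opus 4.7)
The plan is to show directly from the definition \eqref{eq: defn gplm} that the pointed loop measures attached to $L$ and to $L^h$ assign the same mass to every non-trivial pointed loop, so that after pushing forward to equivalence classes we get $\mu^h = \mu$.

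First I would check that the hypothesis $-Lh \geq 0$ is exactly what makes $L^h$ a legitimate (sub-)Markovian generator: the off-diagonal entries $(L^h)^x_y = L^x_y h(y)/h(x)$ are non-negative, and the row sums $\sum_y (L^h)^x_y = (Lh)^x/h(x)$ are non-positive. The critical observation is that the diagonal is unchanged, $(L^h)^x_x = L^x_x$, so that the normalized off-diagonal kernel transforms multiplicatively,
$$(Q^h)^x_y = -\frac{(L^h)^x_y}{(L^h)^x_x} = Q^x_y \cdot \frac{h(y)}{h(x)}, \qquad x\neq y.$$

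Plugging this into \eqref{eq: defn gplm}, the mass assigned by $L^h$ to a non-trivial pointed loop $\dot{\ell} = (x_1, \ldots, x_k)$ factors as
$$\frac{1}{k}\prod_{i=1}^{k}(Q^h)^{x_i}_{x_{i+1}} = \left(\frac{1}{k}\prod_{i=1}^{k} Q^{x_i}_{x_{i+1}}\right)\cdot \prod_{i=1}^{k}\frac{h(x_{i+1})}{h(x_i)},$$
with indices taken modulo $k$. The second product telescopes to $1$ around the cycle, so the first factor, which equals $\dot{\mu}(\dot{\ell})$, coincides with $\dot{\mu}^h(\dot{\ell})$. Pushing forward to the quotient by cyclic rotations then yields $\mu^h = \mu$.

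There is essentially no obstacle: the argument reduces to the telescoping identity along a closed cycle together with the observation that the $h$-transform leaves the diagonal of $L$ (and hence the holding rates of the underlying Markov chain) untouched. The restriction to non-trivial loops is precisely what allows this clean cancellation, since no ``extra'' factor depending on $L^x_x$ alone enters.
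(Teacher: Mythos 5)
Your proof is correct and is exactly the computation the paper has in mind when it says the result is ``immediate from the definition of the loop measure'': the diagonal of $L$ is unchanged by the $h$-transform, so $Q$ picks up the multiplicative factor $h(x_{i+1})/h(x_i)$, which telescopes to $1$ around any closed cycle. Nothing further is needed.
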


Finally, we would like to mention that the marginal distributions of the clusters can be expressed by corresponding quantities of the weighted random walk on the graph. As we shall not use the formula, we do not provide the full statement. Please refer to \cite[Lemma 2.7, Lemma 2.8]{Markovian-loop-clusters-on-graphs} for more details.

\section{Loop clusters when no loop passes through \texorpdfstring{$1$}{1}}
We study the discrete loop model on $G^{(n)}$ conditioned on the absence of loops through $1$, and prove Proposition \ref{conditioned renewal process} in Subsection \ref{subsect: Proof of Proposition conditioned renewal process} by identify our model with Model$(\mathbb{Z},\alpha,\kappa^{(n)})$ conditioned on the closedness of $\{1,2\}$ and $\{n,n+1\}$
\subsection{Proof of Proposition \ref{conditioned renewal process}}\label{subsect: Proof of Proposition conditioned renewal process}

\begin{defn}
 For $N=1,2,\ldots,+\infty$ and $\alpha,p,c\geq 0$, denote by Model$([2,N],\alpha,p,c)$ the loop model defined by the loop-soup with intensity measure $\alpha\mu$, where $\mu$ is the loop measure associated with the following Markov generator $L$ by Equation \eqref{eq: defn gplm}:
 $$L^{m}_{m}=-(1+c_n),L^{m}_{m+1}=p_n,L^{m}_{m-1}=1-p_n\text{ for all }m=2,3,\ldots,N,$$
 and $L$ is null elsewhere.
\end{defn}
\begin{rem}
Note that we can identify Model$([2,N],\alpha,p,c)$ and Model$([1,N-1],\alpha,p,c)$ in a natural way by translation invariance.
\end{rem}

By applying Lemma \ref{lem: Doob h transform} with the function $h$ defined by
$$h(m)=\left(\frac{1-p_n}{p_n}\right)^{\frac{m}{2}}\text{ for }m=2,3,\ldots,$$
we see that
\begin{equation}\label{eq: change of parameter by Doob h transform}
 \text{Model}([1,N],\alpha,p,c)=\text{Model}([1,N],\alpha,\kappa)\text{ for }\kappa=\frac{1+c-2\sqrt{p(1-p)}}{\sqrt{p(1-p)}}.
\end{equation}

Among the loop-soup on the discrete circle $G^{(n)}$, the ensemble of loops $\mathcal{DL}_{\alpha,1}^{(n)}$ through the vertex $1$ is independent of its complement $\mathcal{DL}_{\alpha}^{(n)}\setminus\mathcal{DL}_{\alpha,1}^{(n)}$ in the loop-soup. Therefore, $\mathbb{P}[\mathcal{DL}_{\alpha,1}^{(n)}\in\cdot|\mathcal{DL}_{\alpha}^{(n)}\setminus\mathcal{DL}_{\alpha,1}^{(n)}=\emptyset]\overset{\text{law}}{=}\mathcal{DL}_{\alpha,1}^{(n)}$, which also equals in law to the loop-soup in Model$([2,n],\alpha,p^{(n)},c^{(n)})$. By Equation \eqref{eq: change of parameter by Doob h transform}, its law is equal to that of the loop-soup in Model$([2,n],\alpha,\kappa^{(n)})$ where $\kappa^{(n)}=\frac{1+c_n-2\sqrt{p_n(1-p_n)}}{\sqrt{p_n(1-p_n)}}$. Again, by the independence between disjoint loop ensembles, the loop clusters on $G^{(n)}$ has the same distribution as the loop clusters inside $[2,n]$ on Model$(\mathbb{Z},\alpha,\kappa^{(n)})$ by conditioning on the closedness of the edges $\{1,2\}$ and $\{n,n+1\}$. Then, the first part of \cite[Proposition 3.1]{Markovian-loop-clusters-on-graphs} implies Part a) of Proposition \ref{conditioned renewal process}. And Part b) of Proposition \ref{conditioned renewal process} is contained in the proof of \cite[Proposition 3.1]{Markovian-loop-clusters-on-graphs}. For the convenience of the readers, we give a sketch: the jump distribution $\nu^{(\kappa^{(n)})}$ of the renewal process is the distribution of the left end point of the left-most closed edge on $\{1,2,3,\ldots,\}$ in Model$(\mathbb{Z},\alpha,\kappa^{(n)})$, conditionally on the closedness of $\{0,1\}$. Therefore,
$$\mathbb{P}[\{n,n+1\}\text{ is closed}|\{0,1\}\text{ is closed}]=\sum\limits_{k=1}^{\infty}\mathbb{P}[W^{(\kappa^{(n)})}_1+\cdots+W^{(\kappa^{(n)})}_k=n],$$
where $(W^{(\kappa^{(n)})}_i)_{i\geq 1}$ is an independent sequence of variables with the common distribution $\nu^{(\kappa^{(n)})}$. From the expression of $\mathbb{P}[\{n,n+1\}\text{ is closed}|\{0,1\}\text{ is closed}]$ in \cite[Proposition 3.1]{Markovian-loop-clusters-on-graphs}, we get the generating function of the jump distribution $\nu^{(\kappa^{(n)})}$.

\subsection{Proof of Proposition \ref{convergence of conditioned renewal processes}}\label{subsect: proof of convergence of conditioned renewal processes}
As in the statement of Proposition \ref{convergence of conditioned renewal processes}, we assume that $\alpha\in]0,1[$.

For a c\`{a}dl\`{a}g process $X$ and a subset $A$ of the state space, denote by $T_{A}$ the entrance time of $A$, i.e. $T_{A}\overset{\mathrm{def}}{=}\inf\{t\geq 0:X_t\in A\}$. We denote by $X_{t-}$ the left hand limit $\lim\limits_{s\uparrow t}X_s$.

Let $(X^{(\kappa)}_t)_{t\geq 0}$ be a subordinator with potential density $U(x,y)=1_{\{y>x\}}\left(\frac{2\sqrt{\kappa}}{1-e^{-2\sqrt{\kappa}(y-x)}}\right)^{\alpha}$. We first define the law of the process $(X^{(\kappa)}_t,t<T_{]1,+\infty[})$ conditionally on the event $\{X^{(\kappa)}_{T_{]1,+\infty[-}}=1\}$ in the following lemma and postpone its proof in the appendix.

\begin{lem}\ \label{lem: subordinator bridge}
\begin{enumerate}
\item For all positive functions $f$, we have
\begin{multline*}
\mathbb{E}^{0}[f(X^{(\kappa)}_s,s\in[0,t])1_{\{t<T_{]1,+\infty[}\}},X^{(\kappa)}_{T_{]1,+\infty[-}}\in \mathrm{d}b]\\
=\mathbb{E}^{0}[X^{(\kappa)}_{T_{]1,+\infty[-}}\in \mathrm{d}b]\mathbb{E}^{0}\left[f(X^{(\kappa)}_s,s\in[0,t])1_{\{t<T_{]1,+\infty[}\}}\frac{u(b-X^{(\kappa)}_t)}{u(b)}\right].
\end{multline*}
\item The conditioned process\footnote{More precisely, the process defined by the probability $\mathbb{E}^{0}\left[f(X^{(\kappa)}_s,s\in[0,t])1_{\{t<T_{]1,+\infty[}\}}\frac{u(1-X^{(\kappa)}_t)}{u(1)}\right]$.} $Y^{(\kappa)}$ is a $h$-transform of the original subordinator with respect to the excessive function $x\rightarrow u(1-x)$. To be more precise, for $y\in[x,1[$, its semi-group is given by
$$Q^{(\kappa)}_t(x,\mathrm{d}y)=\frac{u(1-y)}{u(1-x)}P^{(\kappa)}_t(x,\mathrm{d}y)$$
where $P^{(\kappa)}_t(x,\mathrm{d}y)$ is the semi-group of the subordinator $X^{(\kappa)}$. Denote by $\mathbb{Q}^{x}$ the law of the Markov process with semi-group $Q^{(\kappa)}_t(x,\mathrm{d}y)=\frac{u(1-y)}{u(1-x)}P^{(\kappa)}_t(x,\mathrm{d}y)$ and initial state $x$. (We choose the c\`{a}dl\`{a}g version of $Y^{(\kappa)}$.)
\item Denote by $\zeta$ the lifetime of the conditioned process $Y^{(\kappa)}$. Then, $Y^{(\kappa)}_{\zeta-}=1$.
\item The semi-group $(Q^{(\kappa)}_t)_{t\geq 0}$ is a Feller semi-group.
\item The time reversal from the lifetime of the process $Y^{(\kappa)}$ is the left-continuous modification of $1-Y^{(\kappa)}$ under $\mathbb{Q}^{0}$.
\item For a fixed $x\in ]0,1[$, with probability $1$ under $\mathbb{Q}^{0}$, it is outside the closure of the range $\bar{\mathcal{R}}(Y^{(\kappa)})$ of $Y^{(\kappa)}$.
\end{enumerate}
\end{lem}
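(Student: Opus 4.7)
The plan is to treat the six parts in order, with Part 1 as the structural foundation. For Part 1, I would apply the strong Markov property of $X^{(\kappa)}$ at time $t$, so that on $\{t<T_{]1,\infty[}\}$ the post-$t$ trajectory is an independent copy of the subordinator started at $X^{(\kappa)}_t$. The classical joint law of undershoot and overshoot at first passage of a subordinator over a level $a$ started at $0$ gives that the undershoot has density $u(y)\,\bar\nu(a-y)$ on $(0,a)$, where $u$ is the potential density and $\bar\nu$ is the tail of the L\'evy measure. Applied with $a = 1-X^{(\kappa)}_t$ and divided by the marginal undershoot density for the process started at $0$, the ratio $u(b-X^{(\kappa)}_t)/u(b)$ appears, which yields the claimed disintegration.

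Parts 2--4 then follow from Part 1 with comparatively routine work. Part 2 is Part 1 at $b=1$ divided by $\mathbb{P}^0(X^{(\kappa)}_{T-}\in\mathrm{d}b)$: the conditional semigroup is identified as the Doob $h$-transform by $h(x)=u(1-x)$, and excessiveness of $h$ for the killed subordinator is extracted from Part 1 by taking $f\equiv 1$. For Part 3, the function $h(y)=u(1-y)$ explodes as $y\uparrow 1$ and $Y^{(\kappa)}$ is monotone with finite lifetime under $\mathbb{Q}^0$; together these force $Y^{(\kappa)}_{\zeta-}=1$ almost surely, with a standard Doob-type argument to rule out exit at an interior point. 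Part 4 is read off the explicit density $q^{(\kappa)}_t(x,y)=p_t(x,y)\,u(1-y)/u(1-x)$, using joint continuity of $p_t$ (noted in the Remark preceding the lemma) and continuity of $u$ on $(0,1]$, with the boundary behaviour at $x=1$ handled using $u(1-x)\to\infty$.

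The main obstacle will be Part 5, the time-reversal identity. My plan is to apply Nagasawa's time-reversal theorem to the Markov process $Y^{(\kappa)}$: since $p_t(x,y)$ depends only on $y-x$, the spatial reflection $y\mapsto 1-y$ intertwines the subordinator semigroup with its Hunt dual with respect to Lebesgue measure, and after absorbing the $h$-transform, the reversed semigroup is recognised as $Q^{(\kappa)}_t$ again; the care required is in matching the entrance behaviour at $0$ with the exit at $1$ and in identifying the left-continuous modification. Part 6 is then short: for $\alpha\in(0,1)$ the subordinator $X^{(\kappa)}$ has no drift, so any fixed $x>0$ is polar for $X^{(\kappa)}$ and is a.s.\ crossed by a jump that strictly straddles it; since $\mathbb{Q}^0$ is locally absolutely continuous with respect to the killed subordinator on $\{t<\zeta\}$ via the $h$-transform, the same straddling property holds under $\mathbb{Q}^0$, giving $x\notin\bar{\mathcal{R}}(Y^{(\kappa)})$ almost surely.
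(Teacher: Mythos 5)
Parts 1, 2, 4, 5, and 6 of your plan are essentially the same as the paper's. For Part 1 you apply the strong Markov property and Bertoin's undershoot--overshoot formula (density $u(y)\bar\nu(a-y)$), which is exactly what the paper does (the paper states the step at a general stopping time $S$, but this is a minor generalization). For Part 2 you extract excessiveness of $h(x)=u(1-x)$ from Part 1 by taking $f\equiv 1$; the paper instead uses $P_tUg=\int_t^\infty P_sg\,\mathrm{d}s$ and approximates the level $1$ from above, but the two routes are comparable. Parts 4, 5, 6 (explicit density / Feller via truncation; time reversal in the sense of Nagasawa and a spatial reflection; Kesten's polarity plus local absolute continuity) match the paper's arguments.

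The genuine gap is Part 3. Your claim that ``$h$ explodes at $1$ and $Y^{(\kappa)}$ is monotone with finite lifetime, hence $Y^{(\kappa)}_{\zeta-}=1$ by a standard Doob-type argument'' does not close. The killed process $X^0$ dies by an overshoot jump across $1$ with $X^0_{\zeta^0-}$ typically strictly below $1$, and the $h$-transform $Y^{(\kappa)}$ could a priori inherit such a ``death by jump'' with $Y^{(\kappa)}_{\zeta-}<1$. The obvious ``Doob-type'' tool here, the bounded $\mathbb{Q}^x$-supermartingale $h(x)/h(Y^{(\kappa)}_t)\,1_{\{t<\zeta\}}$, only forces $Y^{(\kappa)}_{t}\uparrow 1$ on $\{\zeta=\infty\}$; since in fact $\zeta<\infty$ a.s. (because $P^0_th(x)\to 0$), this tells you nothing about $Y^{(\kappa)}_{\zeta-}$. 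One could try Martin-boundary theory (if $h$ is a minimal excessive function, the $h$-process a.s.\ converges to the corresponding boundary point), but that is not ``standard'' here and would require a separate proof of minimality. The paper instead computes explicitly: applying Chung's Theorem 11.9 to $T_{[1-\delta,\infty[}$ and Bertoin's Lemma 1.10, then performing the change of variable $c=1-x-a-b$ (motivated by time reversal), it obtains the exact identity $\mathbb{Q}^x[T_{[1-\delta,\infty[}<\zeta]=\mathbb{Q}^x[T_{[x+\delta,\infty[}<\zeta]$; letting $\delta\downarrow 0$ and using right-continuity at time $0$ gives the limit $1$, and monotonicity of $\delta\mapsto\mathbb{Q}^x[T_{[1-\delta,\infty[}<\zeta]$ then forces each term to equal $1$, hence $Y^{(\kappa)}_{\zeta-}=1$. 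You should replace the Part 3 heuristic with this explicit calculation (or an argument of comparable rigour).
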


We also need a convergence result of $(\frac{1}{n} S^{(\kappa^{(n)})}_{\lfloor n^{\alpha-1}t\rfloor},t\geq 0)$ as $n\rightarrow\infty$, in the sense of Skorokhod convergence, for a reason that will become clear later. If we put $\epsilon=\frac{1}{n}$ in \cite[Proposition 3.1]{Markovian-loop-clusters-on-graphs}, then their result affirms the convergence of $(\frac{1}{n} S^{(\kappa^{(n)})}_{\lfloor n^{\alpha-1}t\rfloor},t\geq 0)$ towards the subordinator $X^{(\kappa)}$ in the sense of finite marginals convergence, where $\kappa^{(n)}=\kappa/n^2$. Same results hold under the assumption that $\lim\limits_{n\rightarrow\infty}n^2\kappa^{(n)}=\kappa$, and the proof is the same. Indeed, the proof of \cite[Proposition 3.1]{Markovian-loop-clusters-on-graphs} is based on asymptotic behaviors of the Laplace transforms of the jump distributions of the renewal processes. To get this, it suffices to assume that $\lim\limits_{n\rightarrow\infty}n^2\kappa^{(n)}=\kappa$. Also, by a coupling argument, this has been pointed out in Subsection \ref{subsect: connection}:
``the convergence result remains the same if we perturb the killing parameter $\kappa^{(n)}$ (by changing $p_n,c_n$) up to order $o(n^{-2})$''. To strengthen the convergence to a Skorokhod convergence, we show the tightness in the following lemma. The argument is standard and we postpone it in the appendix.
\begin{lem}\label{lem: tightness and skorokhod convergence towards a subordinator}
The distribution $(\frac{1}{n} S^{(\kappa^{(n)})}_{\lfloor n^{\alpha-1}t\rfloor},t\geq 0)$ is tight in the Skorokhod space. Therefore, as $n\rightarrow\infty$, the renewal process $(\frac{1}{n} S^{(\kappa^{(n)})}_{\lfloor n^{\alpha-1}t\rfloor},t\geq 0)$ converges to the subordinator $(X_t^{(\kappa)},t\geq 0)$ in Skorokhod space.
\end{lem}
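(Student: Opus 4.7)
The plan is to establish tightness of $\{X_n := (\tfrac{1}{n} S^{(\kappa^{(n)})}_{\lfloor n^{1-\alpha} t\rfloor})_{t\ge 0}\}_n$ in the Skorokhod space $D[0,T]$ for every $T > 0$ via Aldous' criterion \cite[Theorem 16.11]{KallenbergMR1876169}. Combined with the finite-dimensional convergence recalled just before the lemma, this will upgrade to Skorokhod convergence of $X_n$ towards $X^{(\kappa)}$. One-point tightness is immediate from the finite-dimensional convergence in distribution $X_n(t) \to X^{(\kappa)}_t$, so only Aldous' moduli condition requires a genuine argument.

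Concretely I would show that for each $\epsilon, \eta > 0$ there exist $\delta > 0$ and $n_0$ such that, uniformly over $n \ge n_0$, over $(\mathcal{F}^n_t)$-stopping times $\tau \le T$ and over $\theta \in [0,\delta]$,
$$\mathbb{P}\bigl[X_n(\tau+\theta) - X_n(\tau) > \epsilon\bigr] \;\le\; \eta.$$
The essential structural fact is that $X_n$ is non-decreasing, piecewise constant on the grid $\{k/n^{1-\alpha}\}_k$, and its increments over disjoint grid intervals are independent. Rounding $\tau$ upward to $\tau' := \lceil n^{1-\alpha}\tau\rceil/n^{1-\alpha}$, which is still an $(\mathcal{F}^n_t)$-stopping time, the integer $N_n(\tau') := \lfloor n^{1-\alpha}\tau'\rfloor$ is a stopping time for the discrete filtration generated by $(W^{(\kappa^{(n)})}_k)_k$. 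Monotonicity of the paths together with the strong Markov property applied at $N_n(\tau')$ then yields the stochastic domination
$$X_n(\tau+\theta) - X_n(\tau) \;\le\; \tfrac{1}{n}\,\widetilde{S}^{(\kappa^{(n)})}_{\lceil n^{1-\alpha}\delta\rceil + 2},$$
where $\widetilde S^{(\kappa^{(n)})}$ denotes an independent copy of $S^{(\kappa^{(n)})}$; the two extra summands absorb the rounding errors at the endpoints.

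It remains to bound this dominating quantity. For any $\delta' > \delta$ one has $\lceil n^{1-\alpha}\delta\rceil + 2 \le \lfloor n^{1-\alpha}\delta'\rfloor$ for $n$ large, so by monotonicity of $\widetilde S^{(\kappa^{(n)})}$ and the already-established finite-dimensional convergence,
$$\limsup_{n \to \infty}\mathbb{P}\!\left[\tfrac{1}{n}\widetilde S^{(\kappa^{(n)})}_{\lceil n^{1-\alpha}\delta\rceil + 2} > \epsilon\right] \;\le\; \mathbb{P}\!\left[X^{(\kappa)}_{\delta'} \ge \epsilon\right].$$
Since $X^{(\kappa)}$ is a c\`adl\`ag subordinator started at $0$, the right-hand side tends to $0$ as $\delta' \to 0$, which delivers Aldous' bound for suitably small $\delta > 0$ and large $n_0$. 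The main technical subtlety I foresee is the passage from an $(\mathcal{F}^n_t)$-stopping time of the c\`adl\`ag process $X_n$ to a stopping time of the underlying discrete random walk; this is precisely what the grid-rounding trick addresses, and the finitely many boundary summands it creates are the only nuisance, but they are swallowed by the same small-$\delta$ bound.
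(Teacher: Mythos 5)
Your proposal is correct and takes essentially the same route as the paper: both verify Aldous' criterion using the filtration $\mathcal{G}^{(n)}_t=\tilde{\mathcal{G}}^{(n)}_{\lfloor n^{1-\alpha}t\rfloor}$, translate a continuous-time stopping time into a discrete stopping time for the underlying renewal walk, exploit monotonicity plus i.i.d. increments to stochastically dominate the increment over a small time window by $\tfrac1n S^{(\kappa^{(n)})}_{O(n^{1-\alpha}\theta)}$, and then invoke the already-known finite-dimensional convergence together with $X^{(\kappa)}_{0+}=0$ to make that bound small. The only cosmetic difference is that the paper rounds $n^{1-\alpha}T$ down rather than up, and writes $\lfloor n^{1-\alpha}T_1\rfloor+\lceil n^{1-\alpha}\theta\rceil$ where you carry a harmless $+2$ correction; both bookkeeping choices are valid.
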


As a consequence of Lemma \ref{lem: tightness and skorokhod convergence towards a subordinator}, by the coupling theorem of Skorokhod and Dudley, we shall assume that \emph{$(\frac{1}{n}S^{(\kappa^{(n)})}_{\lfloor n^{1-\alpha}t\rfloor},t\geq 0)_n$ converges to $(X^{(\kappa)}_t,t\geq 0)$ almost surely, as $n\rightarrow\infty$}.

\medskip

We are ready for the proof of Proposition \ref{convergence of conditioned renewal processes}. Note that Lemma \ref{lem: subordinator bridge} gives the Radon-Nikodym derivative between the subordinator $X^{(\kappa)}$ and the conditioned process $Y^{(\kappa)}$ on a sub-$\sigma$-field. The main idea of the proof of Proposition \ref{convergence of conditioned renewal processes} is to  show the convergence of the Radon-Nikodym derivatives from the discrete cases to the continuous case.

\bigskip

Firstly, we compute the Radon-Nikodym derivatives between the renewal processes and the conditioned renewal processes. For $m\geq 1$, let $C^{(\kappa^{(n)})}(m)=\mathbb{P}[\exists i\geq 1:S^{(\kappa^{(n)})}_i=m]$. 
For $m\geq 1$, define $T_m=\inf\{i\geq 0: S^{(\kappa^{(n)})}_i\geq m\}$. For all $m\geq 0$ and all positive measurable functions $F:\mathbb{R}^{m+1}\rightarrow\mathbb{R}_{+}$, we have that
\begin{multline}\label{eq: cocrp1}
 \mathbb{E}\left[F(S^{(\kappa^{(n)})}_0,\ldots,S^{(\kappa^{(n)})}_m)1_{\{T_n>m\}}\left|\{\exists i\geq 1:S^{(\kappa^{(n)})}_i=n\}\right.\right]\\
 =\mathbb{E}\left[F(S^{(\kappa^{(n)})}_0,\ldots,S^{(\kappa^{(n)})}_m)1_{\{T_n>m\}}\frac{C^{(\kappa^{(n)})}(n-S^{(\kappa^{(n)})}_m)}{C^{(\kappa^{(n)})}(n)}\right].
\end{multline}
We denote by $({\tilde{\mathcal{G}}_{m}^{(n)})}_m$ the filtration generated by the renewal process $(S^{(\kappa^{(n)})}_i)_i$ and by $(\mathcal{G}^{(n)}_{t})_{t\geq 0}$ the filtration $(\tilde{\mathcal{G}}_{\lfloor n^{1-\alpha}t\rfloor}^{(n)})_{t\geq 0}$. Then, for a stopping time $\tau$ and an event $A\in\mathcal{G}_{\tau}^{(n)}$, from Equation \eqref{eq: cocrp1}, we deduce that
\begin{multline}\label{eq: cocrp2}
 \mathbb{P}\left[A\cap\{S^{(\kappa^{(n)})}_{\lfloor n^{1-\alpha}\tau\rfloor}/n<1\}\left|\{\exists i\geq 1:S^{(\kappa^{(n)})}_i=n\}\right.\right]\\
 =\mathbb{E}\left[1_A1_{\left\{S^{(\kappa^{(n)})}_{\lfloor n^{1-\alpha}\tau\rfloor}/n<1\right\}}\frac{C^{(\kappa^{(n)})}\left(n-S^{(\kappa^{(n)})}_{\lfloor n^{1-\alpha}\tau\rfloor}\right)}{C^{(\kappa^{(n)})}(n)}\right].
\end{multline}

\medskip

Secondly, we will show that for fixed time $t$,
\begin{equation}\label{eq: cocrp3}
 \lim\limits_{n\rightarrow\infty}1_{\left\{S^{(\kappa^{(n)})}_{\lfloor n^{1-\alpha}t\rfloor}/n<1\right\}}\frac{C^{(\kappa^{(n)})}\left(n-S^{(\kappa^{(n)})}_{\lfloor n^{1-\alpha}t\rfloor}\right)}{C^{(\kappa^{(n)})}(n)}=1_{\{X_t<1\}}\frac{u(1-X^{(\kappa)}_t)}{u(1)}.
\end{equation}
By Theorem \ref{thm: lemaire lejan} \cite[Proposition 3.1]{Markovian-loop-clusters-on-graphs}, we have that
\begin{equation*}
 C^{(\kappa^{(n)})}(m)=\left(\frac{1-e^{-2r^{(n)}}}{1-e^{-2(m+1)r^{(n)}}}\right)^{\alpha}.
\end{equation*}
Note that as $n$ tends to $\infty$,
\begin{equation}\label{eq: cocrp4}
 C^{(\kappa^{(n)})}(\lfloor bn\rfloor)\sim \left\{
\begin{array}{ll}
\left(\frac{2\sqrt{\kappa}}{1-e^{-2b\sqrt{\kappa}}}\right)^{\alpha}n^{-\alpha} & \kappa>0,\\
(bn)^{-\alpha} & \kappa=0.
\end{array}
\right.
\end{equation}
Moreover, $(C^{(\kappa^{(n)})}(\lfloor bn\rfloor)n^{\alpha},b\in K)$ converges uniformly on any compact subset $K\subset]0,\infty[$.
As a Feller process, $X^{(\kappa)}$ is continuous at $t$ with probability $1$. Thus, for fixed time $t$, we get that $\lim\limits_{n\rightarrow\infty}\frac{1}{n}S^{(\kappa^{(n)})}_{\lfloor n^{1-\alpha}t\rfloor}=X^{(\kappa)}_t$, which implies Equation \ref{eq: cocrp3}.

\medskip

Finally, we deduce the finite marginals convergence from \eqref{eq: cocrp3}.

For $\delta>0$, $m\geq 1$ and a bounded continuous function $f:\mathbb{R}^m:\rightarrow\mathbb{R}$, the following quantity is uniformly bounded by some finite constant $Cst(\delta,||f||_{\infty})$ for all $n$:
$$\left|f\left(\frac{1}{n}S^{(\kappa^{(n)})}_{\lfloor n^{1-\alpha}s\rfloor},s\in[0, t]\right)1_{\left\{S^{(\kappa^{(n)})}_{\lfloor n^{1-\alpha}t\rfloor}/n<1-\delta\right\}}\frac{C^{(\kappa^{(n)})}\left(n-S^{(\kappa^{(n)})}_{\lfloor n^{1-\alpha}t\rfloor}\right)}{C^{(\kappa^{(n)})}(n)}\right|<Cst(\delta,||f||_{\infty})<\infty.$$
For $0\leq t_1\leq \cdots\leq t_m$, the subordinator $X^{(\kappa)}$, as a Feller process, is almost surely continuous at time $t_1,\ldots,t_m$. Thus, almost surely,
$$\forall i=1,\ldots,m,\quad \lim\limits_{n\rightarrow\infty}\frac{1}{n}S^{(\kappa^{(n)})}_{\lfloor n^{1-\alpha}t_i\rfloor}=X^{(\kappa)}_{t_i}.$$
Thus, by Equation \eqref{eq: cocrp3} and dominated convergence, we get that
\begin{multline*}
\lim\limits_{n\rightarrow\infty}\mathbb{P}\left[f\left(\frac{1}{n}S^{(\kappa^{(n)})}_{\lfloor n^{1-\alpha}t_1\rfloor},\ldots,\frac{1}{n}S^{(\kappa^{(n)})}_{\lfloor n^{1-\alpha}t_m\rfloor}\right)1_{\left\{S^{(\kappa^{(n)})}_{\lfloor n^{1-\alpha}t_m\rfloor}/n<1-\delta\right\}}\frac{C^{(\kappa^{(n)})}\left(n-S^{(\kappa^{(n)})}_{\lfloor n^{1-\alpha}{t_m}\rfloor}\right)}{C^{(\kappa^{(n)})}(n)}\right]\\
=\mathbb{P}\left[f(X^{(\kappa)}_{t_1},\ldots,X^{(\kappa)}_{t_m})\frac{u(1-X^{(\kappa)}_{t_m})}{u(1)},X^{(\kappa)}_{t_m}<1-\delta\right].
\end{multline*}
Equivalently, by Equation \eqref{eq: cocrp1} and Lemma \ref{lem: subordinator bridge}, $\forall \delta>0,m\geq 1,0\leq t_1\leq\cdots\leq t_m$ and bounded continuous $f:\mathbb{R}^m\rightarrow\mathbb{R}$, we have that
\begin{multline*}
\lim\limits_{n\rightarrow\infty}\mathbb{P}\left[f\left(\frac{1}{n}S^{(\kappa^{(n)})}_{\lfloor n^{1-\alpha}t_1\rfloor},\ldots,\frac{1}{n}S^{(\kappa^{(n)})}_{\lfloor n^{1-\alpha}t_m\rfloor}\right)1_{\left\{S^{(\kappa^{(n)})}_{\lfloor n^{1-\alpha}t_m\rfloor}/n<1-\delta\right\}}\left|\{\exists i\geq 1:S^{(\kappa^{(n)})}_i=n\}\right.\right]\\
=\mathbb{P}\left[f(Y^{(\kappa)}_{t_1},\ldots,Y^{(\kappa)}_{t_m}),Y^{(\kappa)}_{t_m}<1-\delta\right].
\end{multline*}
Therefore, we have the uniqueness of all possible sub-sequential limits of the distributions of the finite marginals of the conditioned renewal processes. The scaled conditioned renewal processes are uniformly bounded by $1$, which implies the tightness of all finite marginal distributions. Consequently, we see that Proposition \ref{convergence of conditioned renewal processes} holds in the sense of finite marginals convergence. To get a Skorokhod convergence, we need the tightness of the family of conditioned renewal processes. Note that for $n\geq 1$, a renewal process conditioned to hit $n$ is exchangeable. By the exchangeability, as an application of Aldous' criteria of tightness \cite[Theorem 16.11]{KallenbergMR1876169}, the finite marginals convergence implies the tightness, see the proof of \cite[Theorem 16.23]{KallenbergMR1876169}.

\section{Loop clusters when all loops pass through \texorpdfstring{$1$}{1}}\label{sec: loops through 1}
In this section, we will prove Proposition \ref{loops through 1}, a description of the loop clusters conditionally on the absence of the loops $\mathcal{DL}_{\alpha,1}^{(n)}$ avoiding the vertex $1$. As we have mentioned in the introduction, we will divide the loop-soup $\mathcal{DL}_{\alpha}^{(n)}\setminus\mathcal{DL}_{\alpha,1}^{(n)}$ passing through $1$ into three disjoint loop-soups: $\mathcal{DL}_{\alpha,2}^{(n)}$, $\mathcal{DL}_{\alpha,3}^{(n)}$ and $\mathcal{DL}_{\alpha,4}^{(n)}$. As a loop-soup is a Poisson point process, $\mathcal{DL}_{\alpha,2}^{(n)}$, $\mathcal{DL}_{\alpha,3}^{(n)}$ and $\mathcal{DL}_{\alpha,4}^{(n)}$ are independent Poisson point process. We will study them separately and then put the results together to prove Proposition \ref{loops through 1}. To precise the definition of $\mathcal{DL}_{\alpha,2}^{(n)}$, $\mathcal{DL}_{\alpha,3}^{(n)}$ and $\mathcal{DL}_{\alpha,4}^{(n)}$, we need to introduce several notation.

We know that $\mathbb{Z}$ is a covering space of $G^{(n)}$ under the following mapping $\pi^{(n)}$:
$$\pi^{(n)}(i+kn)=i+1\text{ for }k\in\mathbb{Z}\text{ and }i=0,\ldots,n-1.$$
\begin{defn}\label{defn: rotation number and lifted pointed loop}
When $\gamma=(\gamma(1),\ldots,\gamma(m))$ is a path in $G^{(n)}$ and $z$ is a point ``lying over'' $\gamma(1)$ (i.e. $\pi^{(n)}(z)=\gamma(1)$), then there exists a unique path $\Gamma$ in $\mathbb{Z}$ lying over $\gamma$ (i.e. $\pi^{(n)}\circ\Gamma=\gamma$) such that $\Gamma(1)=z$. The path $\Gamma$ is called the lift of $\gamma$ at $z$.

By definition, a pointed loop $\dot{\ell}=(x_1,x_2,\ldots,x_m)$ on $G^{(n)}$ is a path $\gamma=(x_1,x_2\ldots,x_m,x_1)$ on $G^{(n)}$. For $z\in\mathbb{Z}$ such that $\pi^{(n)}(z)=x_1$, let $\Gamma=(\Gamma(1),\ldots,\Gamma(m+1))$ be the lift of $\gamma$ at $z$. Then, $(\Gamma(m+1)-\Gamma(1))/n$ is an integer independent of the choice of $z$, which is defined to be the rotation number $\Rot(\dot{\ell})$ of the pointed loop $\dot{\ell}$. If we choose $z$ within $\{0,1,\ldots,n-1\}$, then $\Gamma$ is uniquely determined. When $\Rot(\dot{\ell})=0$, we have that $\Gamma(m+1)=\Gamma(1)$ and $\Gamma$ is a bridge. Then, we denote by $\Lift(\dot{\ell})$ the unique pointed loop $(\Gamma(1),\ldots,\Gamma(m))$, by choosing $\Gamma(1)\in\{0,1,\ldots,n-1\}$. Since two equivalent pointed loops have the same rotation number, the rotation number $\Rot(\ell)$ of a loop $\ell$ is well-defined.
\end{defn}

\begin{defn}\label{defn:lift loop}
 Define a $0$-$1$ valued function $\Psi^{(n)}$ on non-trivial loops on $G^{(n)}$: $\Psi^{(n)}(\ell)=1$ iff the following three conditions are all fulfilled.
 \begin{itemize}
  \item[a)] $\Rot(\ell)=0$,
  \item[b)] $\ell$ passes through the vertex $1$ in the discrete circle $G^{(n)}$,
  \item[c)] suppose that $\dot{\ell}$ and $\dot{\ell}'$ are both in the equivalence class $\ell$ and start from the vertex $1$ in $G^{(n)}$. According to Definition \ref{defn: rotation number and lifted pointed loop}, we have a unique pointed loop $\Lift(\dot{\ell})$ on $\mathbb{Z}$ starting from $0$ as the lift of $\dot{\ell}$. Similarly, we get $\Lift(\dot{\ell}')$. Then, our condition c) requires that $\Lift(\dot{\ell})$ and $\Lift(\dot{\ell}')$ are equivalent pointed loops.
 \end{itemize}
For a loop $\ell$ such that $\Psi^{(n)}(\ell)=1$, we choose some representative pointed loop $\dot{\ell}$ in the equivalence class $\ell$. Denote by $\Lift(\dot{\ell})$ the unique pointed loop on $\mathbb{Z}$ starting from $0$ that lies over $\dot{\ell}$. Then, we define the lift of the loop $\ell$ to be the loop $\Lift(\ell)$ which is the equivalence class of $\Lift(\dot{\ell})$. 
\end{defn}
\begin{rem}
 Condition $c)$ is equivalent to the following statement: let $\dot{\ell}=(x_1,\ldots,x_m)$ be a pointed loop in the class $\ell$, starting from the vertex $x_1=1$, with zero rotation number. Then, there exists no consecutive subsequence $1,2,\ldots,n,1$ or $1,n,n-1,\ldots,1$ inside $x_1,\ldots,x_n$. 
\end{rem}

We introduce Definition \ref{defn:lift loop} for the following purpose:
\begin{lem}\label{lem:lift loop measure}
 Let $\mu_{n,\mathbb{Z}}$ be the non-trivial loop measure on $\mathbb{Z}$ associated with the following Markov generator $L$:
$$L^{i}_{j}=\left\{
\begin{array}{ll}
\frac{1}{2} & \text{ for }|i-j|=1,\\
-1-\kappa^{(n)}/2 & \text{ for }i=j,\\
0 & \text{otherwise,}
\end{array}
\right.$$
where $\kappa^{(n)}=\frac{1+c_n-2\sqrt{p_n(1-p_n)}}{\sqrt{p_n(1-p_n)}}$. Then, the push-forward $\Lift\circ\mu_n(\Psi^{(n)}(\ell)=1,\mathrm{d}\ell)$ of the measure $\mu_n(\Psi^{(n)}(\ell)=1,\mathrm{d}\ell)$ equals
$$\mu_{n,\mathbb{Z}}(0\in\ell\text{ and }\ell\subset[1-n,n-1],\mathrm{d}\ell).$$
\end{lem}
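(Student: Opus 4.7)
The plan is to factor the statement through an intermediate measure on $\mathbb{Z}$. Consider the generator $\tilde L$ on $\mathbb{Z}$ defined by $\tilde L^i_{i+1}=p_n$, $\tilde L^i_{i-1}=1-p_n$, $\tilde L^i_i=-(1+c_n)$, which has exactly the same per-step transition rates as $L^{(n)}$ on $G^{(n)}$ but lives on the universal cover; let $\tilde\mu$ denote its associated non-trivial loop measure. I will prove (i) that $\Lift\circ\mu_n(\Psi^{(n)}(\ell)=1,\mathrm{d}\ell)=\tilde\mu(0\in\ell,\,\ell\subset[1-n,n-1],\mathrm{d}\ell)$, and (ii) that $\tilde\mu$ and $\mu_{n,\mathbb{Z}}$ coincide when restricted to loops contained in $[1-n,n-1]$; the two together yield the lemma.

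For (i), the key geometric fact is that for a pointed loop $\dot\ell=(x_1,\ldots,x_k)$ on $G^{(n)}$ starting at vertex $1$ with zero rotation, its lift $\Lift(\dot\ell)=(y_1,\ldots,y_k)$ at $0\in\mathbb{Z}$ is the unique nearest-neighbor path with $y_1=0$ and $\pi^{(n)}(y_i)=x_i$. Because the walk on $\mathbb{Z}$ is nearest-neighbor, $y_i\in n\mathbb{Z}$ precisely at the indices $i$ with $x_i=1$, so the lift stays inside $[1-n,n-1]$ if and only if $y_i=0$ whenever $x_i=1$, which is exactly condition c) of Definition \ref{defn:lift loop}. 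Thus $\Lift$ provides a bijection between pointed representatives of $\Psi^{(n)}=1$ loops starting at vertex $1$ and pointed loops on $\mathbb{Z}$ starting at $0$ and contained in $[1-n,n-1]$. A clockwise step on $G^{(n)}$ (weight $p_n/(1+c_n)$ in $Q^{(n)}$) corresponds to a $+1$ step on $\mathbb{Z}$ (weight $p_n/(1+c_n)$ in $\tilde Q$), and likewise counter-clockwise to $-1$, so the $1/k$-normalised mass of Equation \eqref{eq: defn gplm} is preserved step by step under $\Lift$.

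To pass from pointed loops to equivalence classes I verify that the minimal periods match. Given $\ell$ of minimal period $p$ represented by some $\dot\ell$ starting at $1$, projection from $\Lift(\ell)$ back to $\ell$ gives $p\mid p'$, where $p'$ is the minimal period of $\Lift(\ell)$. Conversely, $y_{1+p}\in n\mathbb{Z}\cap[1-n,n-1]=\{0\}$ combined with the fact that each increment $y_{j+1}-y_j$ is determined by the directed edge $x_j\to x_{j+1}$ yields $y_{i+p}=y_i$ for all $i$ by induction, hence $p'\mid p$. Both $\ell$ and $\Lift(\ell)$ therefore have exactly $p$ distinct pointed representatives of equal $\dot\mu_n$/$\dot{\tilde\mu}$-mass, and summing gives $\mu_n(\{\ell\})=\tilde\mu(\{\Lift(\ell)\})$, which proves (i).

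For (ii), Lemma \ref{lem: restriction property} lets me replace both sides by the loop measures of the respective generators restricted to $[1-n,n-1]$. Applying Doob's $h$-transform (Lemma \ref{lem: Doob h transform}) to $\tilde L|_{[1-n,n-1]}$ with $h(m)=\bigl(\tfrac{1-p_n}{p_n}\bigr)^{m/2}$ symmetrises the off-diagonal entries to $\sqrt{p_n(1-p_n)}$ without changing the loop measure; multiplying the generator by the positive constant $1/(2\sqrt{p_n(1-p_n)})$ leaves $Q$ invariant (hence leaves the loop measure invariant) while rescaling the off-diagonal rates to $1/2$ and the diagonal to $-(1+c_n)/(2\sqrt{p_n(1-p_n)})=-(1+\kappa^{(n)}/2)$ by Definition \ref{defn: notation}. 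This is precisely the generator defining $\mu_{n,\mathbb{Z}}$, proving (ii). The subtlest step I anticipate is the period-matching in (i); it is exactly there that condition c) --- equivalently, containment of the lift in $[1-n,n-1]$ --- enters essentially, since if the lift ever visited $\pm n$ then two cyclic shifts of $\dot\ell$ starting at vertex $1$ would lift to pointed loops on $\mathbb{Z}$ differing by a nonzero translation rather than by a cyclic shift.
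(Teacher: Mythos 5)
Your proof is correct and is a fully detailed version of what the paper dispatches in one sentence ("can be proven by comparing the weights of each particular loop under these two measures"): you compare weights loop by loop, taking care to match the multiplicities via the period argument, and you handle the change of parameters from $(p_n,c_n)$ to the symmetric form via Doob's $h$-transform and constant rescaling (the latter step could also be done directly by noting that any loop on $\mathbb{Z}$ has equal numbers of $+1$ and $-1$ steps, so only $\sqrt{p_n(1-p_n)}/(1+c_n)$ enters, but your route through Lemma~\ref{lem: Doob h transform} is equally valid). Your intermediate generator $\tilde L$ and the period-matching discussion are exactly the points the paper elides, and they are handled correctly.
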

\begin{proof}
 Lemma \ref{lem:lift loop measure} can be proven by comparing the weights of each particular loop under these two measures.
\end{proof}

Next, we define a partition $(\mathcal{O}^{(n)}_i,i=1,2,3,4)$ of possible non-trivial loops on $n$-th discrete circle $G^{(n)}$.

\begin{defn}\label{defn: four type of loops}
Let $\mathcal{O}^{(n)}_1$ be the ensemble of non-trivial loops avoiding $1$, $\mathcal{O}^{(n)}_2$ the ensemble of non-trivial loops passing through $1$ with non-zero rotation numbers, $\mathcal{O}^{(n)}_3$ the ensemble $\{\ell\text{ is non-trivial}:\Phi^{(n)}(\ell)=1\}$, and $\mathcal{O}^{(n)}_4$ the remainder. Let $\mathcal{O}^{(n)}_{\text{cov}}$ be the ensemble of non-trivial loops which cover all the vertices in the discrete circle $G^{(n)}$. Define $\mathcal{DL}^{(n)}_{\alpha,i}=\mathcal{DL}_{\alpha}^{(n)}\cap\mathcal{O}^{(n)}_i$ for $i=1,2,3,4$, where $\mathcal{DL}_{\alpha}^{(n)}$ is the Poissonian loop ensemble of intensity $\alpha\mu_n$.
\end{defn}
By the definition of the Poisson random measure, $(\mathcal{DL}_{\alpha,i}^{(n)},i=1,2,3,4)$ are independent. Also, note that $\mathcal{DL}^{(n)}_{\alpha,1}\cap\mathcal{O}^{(n)}_{\text{cov}}=\emptyset$ and that $\mathcal{DL}^{(n)}_{\alpha,2}\cup\mathcal{DL}^{(n)}_{\alpha,4}\subset\mathcal{O}^{(n)}_{\text{cov}}\subset \mathcal{DL}^{(n)}_{\alpha,2}\cup\mathcal{DL}^{(n)}_{\alpha,3}\cup\mathcal{DL}^{(n)}_{\alpha,4}$. From the definition of the non-trivial loop measure $\mu_n$, the law of $\mathcal{DL}^{(n)}_{\alpha,1}$, $\mathcal{DL}^{(n)}_{\alpha,3}$ and $\mathcal{DL}^{(n)}_{\alpha,4}$ do not change if we replace $(p_n,1-p_n,1+c_n)$ by $(\sqrt{p_n(1-p_n)},\sqrt{p_n(1-p_n)},1+c_n)$ (or $(\frac{1}{2},\frac{1}{2},1+\frac{\kappa^{(n)}}{2})$ equivalently). The non-symmetry only affects the distribution of $\mathcal{DL}^{(n)}_{\alpha,2}$. (This will become clear in the following subsections.)

\bigskip

We will study $\mathcal{DL}^{(n)}_{\alpha,3}$ and $\mathcal{DL}^{(n)}_{\alpha,2}\cup\mathcal{DL}^{(n)}_{\alpha,4}$ in different subsections. Then, we will prove Proposition \ref{loops through 1} in the last subsection of the present section.

\subsection{Loop-soup \texorpdfstring{$\mathcal{DL}^{(n)}_{\alpha,2}\cup\mathcal{DL}^{(n)}_{\alpha,4}$}{DLα,2 ∪ DLα,4}}
Note that the loops in $\mathcal{DL}^{(n)}_{\alpha,2}\cup\mathcal{DL}^{(n)}_{\alpha,4}$ are loops covering all the vertices in $G^{(n)}$. To study the loop clusters, it suffices to calculate the probability $\mathbb{P}[\mathcal{DL}^{(n)}_{\alpha,2}\cup\mathcal{DL}^{(n)}_{\alpha,4}=\emptyset]$, which is given by the following lemma.
\begin{lem}\label{lem: absence of L2 and L4}
Suppose that $r^{(n)}$ is the same as in Definition \ref{defn: notation}. We have that
$$\mathbb{P}[\mathcal{DL}^{(n)}_{\alpha,2}\cup\mathcal{DL}^{(n)}_{\alpha,4}=\emptyset]=\left(\frac{\cosh(nr^{(n)})}{\cosh(nr^{(n)})-\cosh(n\log(\frac{p_n}{1-p_n})/2)}\right)^{-\alpha}.$$
If $\lim\limits_{n\rightarrow\infty}n^2\kappa^{(n)}=\kappa$ and $\lim\limits_{n\rightarrow\infty}n^2c_n=\epsilon\in[0,\kappa/2]$, then
\begin{equation}\label{eq: laoLaL0}
 \lim\limits_{n\rightarrow\infty}\mathbb{P}[\mathcal{DL}^{(n)}_{\alpha,2}\cup\mathcal{DL}^{(n)}_{\alpha,4}=\emptyset]=\left(\frac{\cosh(\sqrt{\kappa})-\cosh(\sqrt{\kappa-2\epsilon})}{\cosh(\sqrt{\kappa})}\right)^{\alpha}.
\end{equation}
\end{lem}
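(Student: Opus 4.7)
The plan is to use the Poisson point process structure to write $\mathbb{P}[\mathcal{DL}^{(n)}_{\alpha,2}\cup\mathcal{DL}^{(n)}_{\alpha,4}=\emptyset]=\exp\bigl(-\alpha\,\mu_n(\mathcal{O}^{(n)}_{\mathrm{cov}})\bigr)$, where $\mathcal{O}^{(n)}_{\mathrm{cov}}=\mathcal{O}^{(n)}_2\cup\mathcal{O}^{(n)}_4$ is the set of non-trivial loops covering every vertex of $G^{(n)}$. The key observation is that, with respect to the four-type decomposition introduced in Definition \ref{defn: four type of loops}, loops visiting $1$ split disjointly into $\mathcal{O}^{(n)}_2\cup\mathcal{O}^{(n)}_3\cup\mathcal{O}^{(n)}_4$, while $\mathcal{O}^{(n)}_3$ is precisely the set of loops through $1$ with zero rotation whose range does not sweep the whole circle (i.e., the non-covering part). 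Hence I decompose
\[
\mu_n(\mathcal{O}^{(n)}_{\mathrm{cov}})\;=\;\mu_n(\text{non-trivial loops through }1)\,-\,\mu_n(\mathcal{O}^{(n)}_3)\;=\;\mu_n(\text{all non-trivial loops})\,-\,\mu_n(\mathcal{O}^{(n)}_1)\,-\,\mu_n(\mathcal{O}^{(n)}_3),
\]
and evaluate each piece separately using Section~\ref{sec: useful facts}.

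For $\mu_n(\text{all})$, I apply Lemma \ref{lem: restriction property} to $F=\{1,\dots,n\}$; the relevant matrix $-L^{(n)}$ is the circulant matrix treated in the second half of Lemma \ref{lem: Toeplitz}, with $a=1+c_n$, $b=-p_n$, $c=-(1-p_n)$. The key algebraic input is that the roots of $x^2-(1+c_n)x+p_n(1-p_n)$ equal $\sqrt{p_n(1-p_n)}\,e^{\pm r^{(n)}}$ (this uses the identity $\cosh r^{(n)}=1+\kappa^{(n)}/2=(1+c_n)/(2\sqrt{p_n(1-p_n)})$). Lemma \ref{lem: Toeplitz} then gives $\det(-L^{(n)})=2(p_n(1-p_n))^{n/2}\bigl[\cosh(nr^{(n)})-\cosh(\tfrac{n}{2}\log\tfrac{p_n}{1-p_n})\bigr]$. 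For $\mu_n(\mathcal{O}^{(n)}_1)$, Lemma \ref{lem: restriction property} restricts to the path $\{2,\dots,n\}$, producing the tridiagonal Toeplitz matrix $T_{3,n-1}$ of the same lemma, whence $\det(T_{3,n-1})=(p_n(1-p_n))^{(n-1)/2}\sinh(nr^{(n)})/\sinh(r^{(n)})$.

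For $\mu_n(\mathcal{O}^{(n)}_3)$, I invoke Lemma \ref{lem:lift loop measure} to identify this with the symmetric loop mass on $\mathbb{Z}$ of loops through $0$ contained in $[1-n,n-1]$; this equals the total loop mass on $[1-n,n-1]$ minus the loop masses on the two disjoint intervals $[1-n,-1]$ and $[1,n-1]$ obtained by deleting $0$. Each of these three contributions is, via Lemma \ref{lem: restriction property}, a tridiagonal determinant with diagonal $\cosh r^{(n)}$ and off-diagonals $-1/2$, whose roots are $e^{\pm r^{(n)}}/2$, so Lemma \ref{lem: Toeplitz} yields closed forms in $\sinh\bigl((k+1)r^{(n)}\bigr)$. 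Using $\sinh(2nr^{(n)})=2\sinh(nr^{(n)})\cosh(nr^{(n)})$, the three pieces collapse to $\mu_n(\mathcal{O}^{(n)}_3)=\log\tfrac{\cosh(r^{(n)})\sinh(nr^{(n)})}{\sinh(r^{(n)})\cosh(nr^{(n)})}$. Combining the three contributions and expressing $1+c_n=2\sqrt{p_n(1-p_n)}\cosh r^{(n)}$, all non-trivial factors cancel and I obtain $\mu_n(\mathcal{O}^{(n)}_{\mathrm{cov}})=\log\tfrac{\cosh(nr^{(n)})}{\cosh(nr^{(n)})-\cosh(\tfrac{n}{2}\log(p_n/(1-p_n)))}$, which gives the closed form in the lemma after taking $\exp(-\alpha\,\cdot)$.

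For the scaling limit, I Taylor expand under $n^2\kappa^{(n)}\to\kappa$ and $n^2c_n\to\epsilon\in[0,\kappa/2]$: from $\cosh r^{(n)}=1+\kappa^{(n)}/2$ one gets $nr^{(n)}\to\sqrt{\kappa}$, while solving the defining relation $(2+\kappa^{(n)})\sqrt{p_n/(1-p_n)}=(1+c_n)(1+p_n/(1-p_n))$ for $\log(p_n/(1-p_n))$ to second order yields $\bigl(\tfrac{n}{2}\log\tfrac{p_n}{1-p_n}\bigr)^2\to\kappa-2\epsilon$, hence $\tfrac{n}{2}\log\tfrac{p_n}{1-p_n}\to\sqrt{\kappa-2\epsilon}$. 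Continuity of $\cosh$ then yields \eqref{eq: laoLaL0}. The main obstacle is the cancellation bookkeeping in step three of the plan and confirming that the lift identification of $\mathcal{O}^{(n)}_3$ via Lemma \ref{lem:lift loop measure} correctly picks out exactly the non-covering zero-rotation loops through $1$; the asymptotic expansion for $\tfrac{n}{2}\log(p_n/(1-p_n))$ is routine but must be done to the correct order since the first-order term vanishes when $p_n\to 1/2$.
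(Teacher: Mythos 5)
Your proof is correct and follows essentially the same route as the paper: the exponential formula for the Poisson loop-soup, the decomposition $\mu_n(\mathcal{O}^{(n)}_2\cup\mathcal{O}^{(n)}_4)=\mu_n(1\in\ell)-\mu_n(\mathcal{O}^{(n)}_3)$, the circulant and tridiagonal Toeplitz determinants via Lemmas \ref{lem: restriction property} and \ref{lem: Toeplitz}, the lift identification of $\mathcal{O}^{(n)}_3$ via Lemma \ref{lem:lift loop measure}, and the second-order expansion of $\tfrac{n}{2}\log(p_n/(1-p_n))$. One minor imprecision: you identify $\mathcal{O}^{(n)}_2\cup\mathcal{O}^{(n)}_4$ with the set $\mathcal{O}^{(n)}_{\mathrm{cov}}$ of covering loops, but in fact only $\mathcal{O}^{(n)}_2\cup\mathcal{O}^{(n)}_4\subsetneq\mathcal{O}^{(n)}_{\mathrm{cov}}$ holds in general (a zero-rotation loop through $1$ whose lift fills out $[1-n,n-1]$ lies in $\mathcal{O}^{(n)}_3$ yet covers $G^{(n)}$); this does not affect your computation, since the identity $\mu_n(\mathcal{O}^{(n)}_2\cup\mathcal{O}^{(n)}_4)=\mu_n(1\in\ell)-\mu_n(\mathcal{O}^{(n)}_3)$ uses only that $\mathcal{O}^{(n)}_2,\mathcal{O}^{(n)}_3,\mathcal{O}^{(n)}_4$ partition the loops through $1$, not any covering characterization.
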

\begin{proof}
Since $\mathcal{DL}_{\alpha,2}^{(n)}\cup\mathcal{DL}_{\alpha,4}^{(n)}$ is a Poisson point process,
\begin{equation}\label{eq: laoLaL1}
 \mathbb{P}[\mathcal{DL}^{(n)}_{\alpha,2}\cup\mathcal{DL}^{(n)}_{\alpha,4}=\emptyset]=\exp\{-\alpha\mu_n(\mathcal{O}^{(n)}_2\cup\mathcal{O}^{(n)}_4)\},
\end{equation}
where $\mu_n$ is the push-forward measure of $\dot{\mu}_n$ defined in Equation \eqref{eq: 1}, and
\begin{equation}\label{eq: laoLaL2}
 \mu_n(\mathcal{O}^{(n)}_2\cup\mathcal{O}^{(n)}_4)=\mu_n(1\in\ell)-\mu_n(\mathcal{O}^{(n)}_3).
\end{equation}
Let's calculate $\mu_n(1\in\ell)$: By Lemma \ref{lem: restriction property}, we have that
\begin{align*}
 \mu_n(1)=&-\log\det(-L^{(n)})+\sum\limits_{i=1}^{n}\log(-L^{(n)})^i_i\\
 \mu_n(\ell\subset\{2,\ldots,n\})=&-\log\det(-L^{(n)}|_{\{2,\ldots,n\}^2})+\sum\limits_{i=2}^{n}\log(-L^{(n)})^i_i.
\end{align*}
Thus, by taking the difference, we see that
\begin{align*}
\mu_n(1\in\ell)=&\mu_n(1)-\mu_n(\ell\subset\{2,\ldots,n\})\\
=&\log(-L^{(n)})^1_1+\log(\det(-L^{(n)}|_{\{2,\ldots,n\}^2}))-\log(\det(-L^{(n)})).
\end{align*}
By Lemma \ref{lem: Toeplitz} for the determinants, the above quantity equals $$\log(1+c_n)+\log(x_1^n-x_2^n)-\log(x_1-x_2)-\log(x_1^n+x_2^n-p_n^n-(1-p_n)^n),$$
where $x_1=e^{r^{(n)}}\sqrt{p_n(1-p_n)}$ and $x_2=e^{-r^{(n)}}\sqrt{p_n(1-p_n)}$. Or equivalently,
\begin{align}\label{eq: laoLaL3}
\mu_n(1\in\ell)=&\log\left(\frac{1+c_n}{\sqrt{(1+c_n)^2-4p_n(1-p_n)}}\right)+\log(\sinh(nr^{(n)}))\notag\\
&-\log\left(\cosh(nr^{(n)})-\cosh(n\log(p_n/(1-p_n))/2)\right).
\end{align}
Next, we calculate $\mu_n(\mathcal{O}^{(n)}_3)$: By Lemma \ref{lem:lift loop measure},
\begin{align*}
\mu_n(\mathcal{O}^{(n)}_3)=&\mu_{n,\mathbb{Z}}(0\in\ell,\ell\subset[1-n,n-1])\\
=&\mu_{n,\mathbb{Z}}(\ell\subset[1-n,n-1])-\mu_{n,\mathbb{Z}}(\ell\subset[1-n,-1])-\mu_{n,\mathbb{Z}}(\ell\subset[1,n-1]),
\end{align*}
where $\mu_{n,\mathbb{Z}}$ is defined in Lemma \ref{lem:lift loop measure}. By Lemma \ref{lem: restriction property} and Lemma \ref{lem: Toeplitz},
\begin{equation}\label{eq: laoLaL4}
\mu_n(\mathcal{O}^{(n)}_3)=\log\left(\frac{1+c_n}{\sqrt{(1+c_n)^2-4p_n(1-p_n)}}\right)+\log(\tanh(nr^{(n)})).
\end{equation}
By combining Equations \eqref{eq: laoLaL1}, \eqref{eq: laoLaL2}, \eqref{eq: laoLaL3} and \eqref{eq: laoLaL4} together,
\begin{equation*}
 \mathbb{P}[\mathcal{DL}^{(n)}_{\alpha,2}\cup\mathcal{DL}^{(n)}_{\alpha,4}=\emptyset]=\left(\frac{\cosh(nr^{(n)})}{\cosh(nr^{(n)})-\cosh(n\log(\frac{p_n}{1-p_n})/2)}\right)^{-\alpha}.
\end{equation*}
Under the assumptions $\lim\limits_{n\rightarrow\infty}n^2\kappa^{(n)}=\kappa$ and $\lim\limits_{n\rightarrow\infty}n^2c_n=\epsilon\in[0,\kappa/2]$, we have
$$\frac{p_n}{1-p_n}=1-\frac{2\sqrt{\kappa-2\epsilon}}{n}+o(1/n)\text{ and }r^{(n)}=\frac{\sqrt{\kappa}}{n}+o(1/n).$$ 
Consequently,
\begin{equation*}
 \lim\limits_{n\rightarrow\infty}\mathbb{P}[\mathcal{DL}^{(n)}_{\alpha,2}\cup\mathcal{DL}^{(n)}_{\alpha,4}=\emptyset]=\left(\frac{\cosh(\sqrt{\kappa})-\cosh(\sqrt{\kappa-2\epsilon})}{\cosh(\sqrt{\kappa})}\right)^{\alpha}. \qedhere
\end{equation*}
\end{proof}

\subsection{Loop-soup \texorpdfstring{$\mathcal{DL}_{\alpha,3}^{(n)}$}{DLα,3}}\label{subsect: dl3}
For a loop $\ell$ in $\mathcal{DL}^{(n)}_{\alpha,3}$, $\Lift(\ell)$ is a loop on $\mathbb{Z}$ passing through $0$ but never reaching $-n$ nor $n$. By Lemma \ref{lem:lift loop measure}, $\Lift(\mathcal{DL}^{(n)}_{\alpha,3})\overset{\mathrm{def}}{=}\{\Lift(\ell):\ell\in\mathcal{DL}_{\alpha,3}^{(n)}\}$ is the Poisson ensemble of loops on $\mathbb{Z}$ with intensity measure $\alpha\mu_{n,\mathbb{Z}}$. They cover a discrete random sub-interval $[-A_n,B_n]$ of $[-n+1,\ldots,n-1]$ which contains $0$. When $A_n+B_n\geq n-1$, all the vertices belong to the same loop cluster formed by $\mathcal{DL}_{\alpha,3}^{(n)}$; when $A_n+B_n\leq n-2$, there is a correspondence between $[-A_n,B_n]$ and the random discrete arc covered by $\mathcal{DL}^{(n)}_{\alpha,3}$ such that $A_n=J_n$ and $B_n=K_n$. We give the distribution of $[-A_n,B_n]$ in the following lemma.
\begin{lem}\label{lem: limit of the loops passing 1}
For a fixed sub-interval $[-m_n,M_n]$ in $[1-n,n-1]$, we have that
\begin{multline*}
 \mathbb{P}([-A_n,B_n]\subset [-m_n,M_n])\\
 =\left(\frac{2\cosh nr^{(n)}}{\sinh nr^{(n)}}\right)^{\alpha}\left(\frac{\sinh \left((m+1)r^{(n)}\right)\sinh \left((M+1)r^{(n)}\right)}{\sinh (m+M+2)r^{(n)}}\right)^{\alpha},
\end{multline*}
where $r^{(n)}$ is given in Definition \ref{defn: notation}. As $n$ tends to infinity, under the assumption that $\lim\limits_{n\rightarrow\infty}n^2\kappa^{(n)}=\kappa$, the sequence of variables $(\frac{A_n}{n},\frac{B_n}{n})_n$ converges in distribution towards $(A,B)\in[0,1]^2$ where
$$\mathbb{P}[A\leq  a,B\leq b]=\left(\frac{2\cosh(\sqrt{\kappa})}{\sinh(\sqrt{\kappa
})}\right)^{\alpha}\left(\frac{\sinh(\sqrt{\kappa}a)\sinh(\sqrt{\kappa}b)}{\sinh(\sqrt{\kappa}(a+b))}\right)^{\alpha}.$$
\end{lem}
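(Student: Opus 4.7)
The plan is a direct computation based on the Poisson structure of the loop-soup and the Toeplitz determinant formulas. By Lemma \ref{lem:lift loop measure}, $\Lift(\mathcal{DL}_{\alpha,3}^{(n)})$ is a Poisson point process of loops on $\mathbb{Z}$ with intensity $\alpha\mu_{n,\mathbb{Z}}(0\in\ell,\ell\subset[1-n,n-1],\mathrm{d}\ell)$. Every such loop contains $0$, so the event $\{[-A_n,B_n]\subset[-m_n,M_n]\}$ is exactly the event that no loop in $\Lift(\mathcal{DL}_{\alpha,3}^{(n)})$ escapes $[-m_n,M_n]$. Consequently,
\begin{equation*}
\mathbb{P}([-A_n,B_n]\subset[-m_n,M_n])=\exp\{-\alpha\Lambda_n\},\quad\text{where}
\end{equation*}
\begin{equation*}
\Lambda_n=\mu_{n,\mathbb{Z}}(0\in\ell,\ell\subset[1-n,n-1])-\mu_{n,\mathbb{Z}}(0\in\ell,\ell\subset[-m_n,M_n]).
\end{equation*}

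Next I would compute $\mu_{n,\mathbb{Z}}(0\in\ell,\ell\subset[a,b])$ for each interval $[a,b]\ni 0$. Writing $\{0\in\ell,\ell\subset[a,b]\}=\{\ell\subset[a,b]\}\setminus\{\ell\subset[a,b]\setminus\{0\}\}$ and invoking Lemma \ref{lem: restriction property}, everything reduces to logarithms of determinants:
\begin{equation*}
\mu_{n,\mathbb{Z}}(0\in\ell,\ell\subset[a,b])=\log(-L)^{0}_{0}+\log\det(-L|_{([a,b]\setminus\{0\})^{2}})-\log\det(-L|_{[a,b]^{2}}).
\end{equation*}
Here $L$ is the generator of Lemma \ref{lem:lift loop measure}: tridiagonal with diagonal $-\cosh r^{(n)}=-(1+\kappa^{(n)}/2)$ and off-diagonal $1/2$. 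The characteristic quadratic is $x^{2}-\cosh(r^{(n)})x+\tfrac14$ with roots $\tfrac12 e^{\pm r^{(n)}}$, so Lemma \ref{lem: Toeplitz} gives for a contiguous block of size $k$
\begin{equation*}
\det(-L|_{\text{block of size }k})=\left(\tfrac12\right)^{k}\frac{\sinh((k+1)r^{(n)})}{\sinh r^{(n)}}.
\end{equation*}
Since removing $0$ splits $[-m_n,M_n]$ into the two disjoint blocks $[-m_n,-1]$ and $[1,M_n]$ of sizes $m_n$ and $M_n$, its determinant factors into the product of the two corresponding Toeplitz determinants. Substituting, the $\cosh r^{(n)}$ factor, the powers of $1/2$ and the $\sinh r^{(n)}$ terms cancel, and one finds
\begin{equation*}
\Lambda_n=\log\!\left(\frac{\sinh(nr^{(n)})\,\sinh((m_n+M_n+2)r^{(n)})}{2\cosh(nr^{(n)})\,\sinh((m_n+1)r^{(n)})\,\sinh((M_n+1)r^{(n)})}\right),
\end{equation*}
which is exactly the closed form stated in the lemma after exponentiating.

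For the scaling limit, I would specialise to $m_n=\lfloor an\rfloor$ and $M_n=\lfloor bn\rfloor$ with $a,b\in(0,1)$ and $a+b<1$. Under the hypothesis $n^{2}\kappa^{(n)}\to\kappa$, the definition of $r^{(n)}$ gives $r^{(n)}=\sqrt{\kappa}/n+o(1/n)$, hence $(m_n+1)r^{(n)}\to a\sqrt{\kappa}$, $(M_n+1)r^{(n)}\to b\sqrt{\kappa}$, $(m_n+M_n+2)r^{(n)}\to(a+b)\sqrt{\kappa}$ and $nr^{(n)}\to\sqrt{\kappa}$. Plugging in, the distribution function converges pointwise on $\{(a,b)\in(0,1)^{2}:a+b<1\}$ to a continuous function, which upgrades to convergence in distribution of $(A_n/n,B_n/n)$ to $(A,B)$.

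All the computations are routine; the only slightly delicate point is the observation that removing $0$ disconnects the induced sub-graph of $\mathbb{Z}$ into two pieces, which is what permits the factorisation of the determinant and ultimately the clean product form in $(m_n,M_n)$. Everything else (Poissonisation, the Lift identity, the restriction property and the Toeplitz formula) has already been set up in Lemma \ref{lem:lift loop measure} and Section \ref{sec: useful facts}, so the main ``obstacle'' is really just careful algebraic bookkeeping.
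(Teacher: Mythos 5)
Your proposal follows the same route as the paper: Poissonization to reduce the event to $\exp\{-\alpha\mu_{n,\mathbb{Z}}(\cdot)\}$, inclusion-exclusion plus Lemma \ref{lem: restriction property} to pass to determinants, Lemma \ref{lem: Toeplitz} to evaluate them, and then the elementary $r^{(n)}\sim\sqrt{\kappa}/n$ asymptotics for the scaling limit. The only cosmetic difference is that you factor the determinant over the two blocks created by deleting $0$ rather than writing three separate restriction terms, and you correctly identify the Toeplitz roots as $\tfrac12 e^{\pm r^{(n)}}$ for the generator of Lemma \ref{lem:lift loop measure} (the paper reuses the expression $e^{\pm r^{(n)}}\sqrt{p_n(1-p_n)}$ from an earlier matrix, which is a harmless slip since only ratios of determinants enter the final formula).
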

\begin{proof}
We fix a sub-interval $[-m_n,M_n]$ of $[1-n,n-1]$. Then,
\begin{multline}\label{eq: llotlp11}
\mathbb{P}([-A_n,B_n]\subset [-m_n,M_n])\\
=\exp\{-\alpha\mu_{n,\mathbb{Z}}(0\in\ell, \ell\subset [1-n,n-1],\ell\not\subset[-m_n,M_n])\},
\end{multline}
where $\mu_{n,\mathbb{Z}}$ is defined in Lemma \ref{lem:lift loop measure}. By inclusion-exclusion principle, for positive integers $m_n$ and $M_n$, we have that
\begin{multline*}
\mu_{n,\mathbb{Z}}(0\in\ell, \ell\subset[-m_n,M_n])\\
=\mu_{n,\mathbb{Z}}(\ell\subset [-m_n,M_n])-\mu_{n,\mathbb{Z}}(\ell\subset [-m_n,-1])-\mu_{n,\mathbb{Z}}(\ell\subset[1,M_n]).
\end{multline*}
By Lemma \ref{lem: restriction property}, we see that
$$\mu_{n,\mathbb{Z}}(\ell\subset [-m_n,M_n])=-\log\det(-L|_{[-m_n,M_n]})+\sum\limits_{x=-m_n}^{M_n}\log(-L^x_x),$$
where $L$ is the Markov generator defined in Lemma \ref{lem:lift loop measure}. Similar expressions hold for the terms $\mu_{n,\mathbb{Z}}(\ell\subset [-m_n,-1])$ and $\mu_{n,\mathbb{Z}}(\ell\subset [1,M_n])$.
Thus,
\begin{multline*}
\mu_{n,\mathbb{Z}}(0\in\ell, \ell\subset [-m_n,M_n])\\
=\log(-L^{0}_{0})-\log\det(-L|_{[-m_n,M_n]})+\log\det(-L|_{[-m_n,-1]})+\log\det(-L|_{[1,M_n]}).
\end{multline*}
We calculate the above determinants by using Lemma \ref{lem: Toeplitz}:
\begin{align*}
(-L^0_0)=&\frac{x_1^2-x_2^2}{x_1-x_2},\\
\det(-L|_{[-m_n,M_n]})=&\frac{x_1^{M_n+m_n+2}-x_2^{M_n+m_n+2}}{x_1-x_2},\\
\det(-L|_{[-m_n,-1]})=&\frac{x_1^{m_n+1}-x_2^{m_n+1}}{x_1-x_2},\\
\det(-L|_{[1,M_n]})=&\frac{x_1^{M_n+1}-x_2^{M_n+1}}{x_1-x_2},
\end{align*}
where $x_1$ and $x_2$ are the solutions of $x^2-(1+c_n)x+\sqrt{p_n(1-p_n)}=0$:
\begin{equation}\label{eq: llotlp12}
x_1=e^{r^{(n)}}\sqrt{p_n(1-p_n)}\text{ and }x_2=e^{-r^{(n)}}\sqrt{p_n(1-p_n)}.
\end{equation}
Therefore,
\begin{equation*}
\mu_{n,\mathbb{Z}}(0\in\ell, \ell\subset[-m_n,M_n])=\log\left((x_1+x_2)\frac{(x_1^{M_n+1}-x_2^{M_n+1})(x_1^{m_n+1}-x_2^{m_n+1})}{(x_1-x_2)(x_1^{m_n+M_n+2}-x_2^{m_n+M_n+2})}\right).
\end{equation*}
In particular,
\begin{equation*}
\mu_{n,\mathbb{Z}}(0\in\ell, \ell\subset [-n+1,n-1])=\log\left((x_1+x_2)\frac{(x_1^{n}-x_2^{n})(x_1^{n}-x_2^{n})}{(x_1-x_2)(x_1^{2n}-x_2^{2n})}\right).
\end{equation*}
By taking the difference, we see that
\begin{multline}\label{eq: llotlp13}
\mu_{n,\mathbb{Z}}(0\in\ell, \ell\subset[-n+1,n-1], \ell\not\subset[-m_n,M_n])\\
=\log\left(\frac{(x_1^{n}-x_2^{n})(x_1^{n}-x_2^{n})(x_1^{m_n+M_n+2}-x_2^{m_n+M_n+2})}{(x_1^{M_n+1}-x_2^{M_n+1})(x_1^{m_n+1}-x_2^{m_n+1})(x_1^{2n}-x_2^{2n})}\right).
\end{multline}
By combining Equations \eqref{eq: llotlp11}, \eqref{eq: llotlp12} and \eqref{eq: llotlp13}, we get that
\begin{equation*}
\mathbb{P}([-A_n,B_n]\subset [-m_n,M_n])=\left(\frac{2\cosh nr^{(n)}}{\sinh nr^{(n)}}\cdot\frac{\sinh \left((m+1)r^{(n)}\right)\sinh \left((M+1)r^{(n)}\right)}{\sinh (m+M+2)r^{(n)}}\right)^{\alpha}. 
\end{equation*}
Finally, by an explicit calculation, we get the convergence result for $(\frac{A_n}{n},\frac{B_n}{n})$, as $n\rightarrow\infty$, under the assumption that $\lim\limits_{n\rightarrow\infty}n^2\kappa^{(n)}=\kappa$.
\end{proof}

\subsection{Proof of Proposition \ref{loops through 1}}
In this subsection, we combine the previous results and give a proof of Proposition \ref{loops through 1}. Note that for non-negative integers $m$ and $M$ such that $m+M\leq n-2$,
\begin{multline*}
 \{\exists \geq 2\text{ loop clusters},J_n\leq m,K_n\leq M,\mathcal{DL}_{\alpha,1}^{(n)}=\emptyset\}\\
 =\{\mathcal{DL}_{\alpha,1}\cup\mathcal{DL}_{\alpha,2}\cup\mathcal{DL}_{\alpha,4}=\emptyset, A_n\leq m,B_n\leq M\},
\end{multline*}
where $A_n,B_n$ are defined in Subsection \ref{subsect: dl3}. Then, by the independence of $(\mathcal{DL}_{\alpha,i}^{(n)})_{i=1,2,3,4}$, Lemmas \ref{lem: absence of L2 and L4} and \ref{lem: limit of the loops passing 1}, we get that
\begin{align*}
 \mathbb{P}[\exists \geq 2\text{ loop clusters},& J_n\leq m,K_n\leq M|\mathcal{DL}_{\alpha,1}^{(n)}=\emptyset]\\
 =&\mathbb{P}[\mathcal{DL}_{\alpha,2}^{(n)}\cup\mathcal{DL}_{\alpha,4}^{(n)}=\emptyset]\mathbb{P}[A_n\leq m,B_n\leq M]\\
 =&2^{\alpha}\left(\frac{\cosh (nr^{(n)})-\cosh\left(\frac{1}{2}n\log\left(\frac{p_n}{1-p_n}\right)\right)}{\sinh (nr^{(n)})}\right)^{\alpha}\\
  &\times\left(\frac{\sinh ((m+1)r^{(n)})\sinh ((M+1)r^{(n)})}{\sinh ((m+M+2)r^{(n)})}\right)^{\alpha},
\end{align*}
which implies the expression of $\mathbb{P}[\exists \geq 2\text{ loop clusters}|\mathcal{DL}_{\alpha,1}^{(n)}=\emptyset]$. The limit result in Proposition \ref{loops through 1} is a consequence of the limit result in Lemma \ref{lem: limit of the loops passing 1}. Indeed,
as the limit distribution $(A,B)$ has a probability density, we have that
\begin{equation*}
 \lim\limits_{n\rightarrow\infty}\mathbb{P}[\exists \geq 2\text{ loop clusters}|\mathcal{DL}_{\alpha,1}^{(n)}=\emptyset]=\mathbb{P}[A+B\leq 1],
\end{equation*}
and for positive real numbers $a$ and $b$ such that $a+b\leq 1$,
\begin{equation*}
 \lim\limits_{n\rightarrow\infty}\mathbb{P}[\exists \geq 2\text{ loop clusters}, J_n\leq an,K_n\leq bn|\mathcal{DL}_{\alpha,1}^{(n)}=\emptyset]=\mathbb{P}[A\leq a,B\leq b].
\end{equation*}

\section{Proof of Theorem \ref{thm: limit distribution of loop cluster}}
For Theorem \ref{thm: limit distribution of loop cluster}, it suffices to prove the following lemma. We will explain this in details after the statement of the lemma.
\begin{lem}\label{lem: the cluster at 0}
For $0<\alpha<1$,
\begin{equation}\label{eq: ltca01}
\lim\limits_{n\rightarrow\infty}\mathbb{P}[\exists\geq 2\text{ loop clusters}|\mathcal{DL}^{(n)}_{\alpha,2}\cup\mathcal{DL}^{(n)}_{\alpha,4}=\emptyset]=\frac{(2\cosh\sqrt{\kappa})^{\alpha}\sinh(\sqrt{\kappa}(1-\alpha))}{\sinh\sqrt{\kappa}}.
\end{equation}
Conditionally on the existence of closed edges, $(G_n/n,D_n/n)$ converges in distribution towards $G,D$ where the density $q(x,y)$ of $(G,D)$ is given by
\begin{multline*}
 \mathbb{P}[G\in \mathrm{d}x,D\in \mathrm{d}y]/\mathrm{d}x\mathrm{d}y\\
=\frac{\sin(\alpha\pi)}{\pi}\frac{2^{\alpha-2}(1-\alpha)\kappa\sinh\sqrt{\kappa}}{\sinh(\sqrt{\kappa}(1-\alpha))\left[\sinh(\sqrt{\kappa}(1-x-y))\right]^{\alpha}\left[\sinh(\sqrt{\kappa}(x+y))\right]^{2-\alpha}}.
\end{multline*}
\end{lem}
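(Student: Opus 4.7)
The plan is to exploit the independence of the disjoint sub-loop-soups $\mathcal{DL}^{(n)}_{\alpha,1}$ (loops avoiding vertex $1$) and $\mathcal{DL}^{(n)}_{\alpha,3}$ to decompose the cluster containing $1$, conditionally on $\mathcal{DL}^{(n)}_{\alpha,2}\cup\mathcal{DL}^{(n)}_{\alpha,4}=\emptyset$. This cluster is the union of the arc $\{n-A_n+1,\dots,n,1,\dots,B_n+1\}$ produced by $\mathcal{DL}^{(n)}_{\alpha,3}$ with the two $\mathcal{DL}^{(n)}_{\alpha,1}$-clusters on the path $[2,n]$ containing its endpoints $n-A_n+1$ and $B_n+1$. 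Writing $L_L$ and $R_R$ for the left endpoint of the former and the right endpoint of the latter, one has $G_n=n+1-L_L$, $D_n=R_R$, and the event ``$\geq 2$ loop clusters'' coincides with $\{L_L>R_R\}$, equivalently with the existence of a closed $\mathcal{DL}^{(n)}_{\alpha,1}$-edge with left endpoint in $\{B_n+1,\dots,n-A_n\}$.

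Next I would pass to a joint scaling limit. By Lemma~\ref{lem: limit of the loops passing 1}, $(A_n/n,B_n/n)\xrightarrow{d}(A,B)$ with the stated density; by Proposition~\ref{convergence of conditioned renewal processes} and Corollary~\ref{cor: cvg of clusters interval}, the rescaled closed-edge set of $\mathcal{DL}^{(n)}_{\alpha,1}$ on $[2,n]$ converges in the Hausdorff sense to the closure $\bar{\mathcal{R}}$ of the range of the conditioned subordinator $Y^{(\kappa)}$, independently of the first convergence. Since $(A,B)$ has a density, part~$6$ of Lemma~\ref{lem: subordinator bridge} combined with Fubini's theorem guarantees that $1-A$ and $B$ lie almost surely in the interiors of (possibly coinciding) complementary intervals of $\bar{\mathcal{R}}$. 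Consequently, after a Skorokhod coupling, the map sending $(A,B,\bar{\mathcal{R}})$ to $(G,D):=(1-l_L,r_R)$, with $l_L=\sup\{t\in\bar{\mathcal{R}}:t\leq 1-A\}$ and $r_R=\inf\{t\in\bar{\mathcal{R}}:t\geq B\}$, is almost surely continuous; this gives $(G_n/n,D_n/n)\xrightarrow{d}(G,D)$ and identifies $\{\geq 2\text{ clusters}\}$ in the limit with $\{G+D<1\}$.

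The explicit density of $(G,D)$ and~\eqref{eq: ltca01} then follow from an integral computation using the $h$-transform structure of $Y^{(\kappa)}$ (Lemma~\ref{lem: subordinator bridge}, part~$2$). Writing $u$ for the renewal density and $\nu$ for the Lévy measure of the underlying subordinator $X^{(\kappa)}$, the Markov/Poisson structure together with the $h$-weight $u(1-\cdot)/u(1)$ yields the joint density
\[\frac{u(z)\,\nu(r-z)\,u(l-r)\,\nu(w-l)\,u(1-w)}{u(1)}\]
for the two gaps of $\bar{\mathcal{R}}$ straddling $B$ and $1-A$ (with $0<z<B<r<l<1-A<w<1$). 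Multiplying by the joint density $f_{A,B}(a,b)$, substituting $r=d$, $l=1-g$, and integrating $z\in(0,b),w\in(1-a,1),a\in(0,g),b\in(0,d)$ yields the density of $(G,D)$ on $\{g+d<1\}$ under the law conditioned on $\mathcal{DL}^{(n)}_{\alpha,2}\cup\mathcal{DL}^{(n)}_{\alpha,4}=\emptyset$; its total mass on the triangle gives~\eqref{eq: ltca01}, and dividing through recovers $q$. The main obstacle is the final hyperbolic-function simplification: it needs the closed form of $\nu$ provided later in Lemma~\ref{lem: levy measure} (obtained by Laplace inversion, whence the $\sin(\alpha\pi)/\pi$ prefactor via $\Gamma(\alpha)\Gamma(1-\alpha)=\pi/\sin(\alpha\pi)$), after which the exponentials from $u(x)\propto e^{\alpha\sqrt\kappa x}/\sinh^\alpha(\sqrt\kappa x)$ telescope across the product, the powers $\sinh^{-\alpha}(\sqrt\kappa(1-x-y))$ and $\sinh^{\alpha-2}(\sqrt\kappa(x+y))$ emerge respectively from $u(1-g-d)$ and from collapsing the inner integrals against $f_{A,B}$, and the remaining constants reassemble into the announced prefactor.
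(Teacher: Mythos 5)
Your proposal follows essentially the same route as the paper's own proof: decompose the $1$-cluster into the $\mathcal{DL}^{(n)}_{\alpha,3}$-arc and the two $\mathcal{DL}^{(n)}_{\alpha,1}$-clusters containing its endpoints, pass (via a Skorokhod coupling and part~6 of Lemma~\ref{lem: subordinator bridge}) to the joint limit in which $(G,D)$ are the appropriate endpoints of the complementary gaps of $\bar{\mathcal{R}}(Y^{(\kappa)})$ straddling $B$ and $1-A$, and compute the density from the $u$-$\Pi$ structure of $Y^{(\kappa)}$ integrated against the density of $(A,B)$. The only organizational difference is that you write down the four-variable joint density directly, whereas the paper first marginalizes the two gap sizes in Lemma~\ref{lem:YTa} and then integrates against $\rho(a,b)$; the final hyperbolic-function reduction you sketch is exactly the (nontrivial) computation the paper carries out via the change of variables $p,q$ and the Beta-function identities.
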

\begin{proof}[Proof of Theorem \ref{thm: limit distribution of loop cluster} by using Lemma \ref{lem: the cluster at 0}]\ 

 By independence of $(\mathcal{DL}_{\alpha,i}^{(n)})_{i=1,2,3,4}$, Equations \eqref{eq: laoLaL0} and \eqref{eq: ltca01} imply Part a) of Theorem \ref{thm: limit distribution of loop cluster} for $\alpha\in]0,1[$. For $\alpha\geq 1$, since $\mathbb{P}[\#\mathcal{C}_{\alpha}^{(n)}=1]$ increases as $\alpha$ increases, the result is obtained by taking $\alpha\uparrow 1$. Since $\mathbb{P}[(G_n/n,D_n/n)\in \cdot|\exists \text{ closed edges}]$ converges towards $(G,D)$ and the distribution of $G+D$ has no atom, we must have
$$\lim\limits_{n\rightarrow\infty}\mathbb{P}[G_n+D_n=n-1|\exists \text{ closed edges}]=0.$$
As a result,
\begin{equation*}
 \mathbb{P}[\exists \text{ a unique closed edge in }G^{(n)}]\leq\mathbb{P}[G_n+D_n=n-1|\exists \text{ closed edges}]\overset{n\rightarrow\infty}{\rightarrow}0.
\end{equation*}
Therefore, by Lemma \ref{lem: the cluster at 0}, we also have the convergence of $(G_n/n,D_n/n)$ by conditioning on the event $\{\#\mathcal{C}_{\alpha}^{(n)}\geq 2\}$ :
$$\lim\limits_{n\rightarrow\infty}\mathbb{P}[(G_n/n,D_n/n)\in\cdot|\#\mathcal{C}_{\alpha}^{(n)}\geq 2]=\mathbb{P}[(G,D)\in\cdot].$$
By the coupling theorem of Skorokhod and Dudley, we shall assume that \emph{$(G_n,D_n)_n$ converges almost surely to $(G,D)$ as $n\rightarrow\infty$}. As in the statement of Theorem \ref{thm: limit distribution of loop cluster}, the processes $(S_i^{(n)}-S_0^{(n)})_{i=0,\ldots,k(n)}$ are the left end points of closed edges, shifted by $S_0^{(n)}=D_n+1$. Conditionally on $(G_n,D_n)$, the sequence $(S_i^{(n)}-S_0^{(n)})_{i=0,\ldots,k(n)}$ has the same law as the conditioned renewal processes formed by the left end points of the closed edges in Model$([1,\ldots,n-1-G_n-D_n],\alpha,\kappa^{(n)})$. Note that
$$\lim\limits_{n\rightarrow\infty}(n-1-G_n-D_n)^2\kappa^{(n)}=(1-G-D)^2\kappa.$$ Therefore, by Proposition \ref{convergence of conditioned renewal processes},
\begin{equation*}
 \lim\limits_{n\rightarrow\infty}\mathbb{P}[\tilde{S}^{(n)}\in\cdot|G_n,D_n]\overset{\text{Skorokhod}}{=}\mathbb{P}[Y^{(\kappa(1-G-D)^{2})}\in\cdot |G,D],
\end{equation*}
where the scaled process $\tilde{S}^{(n)}_t=\frac{1}{n-1-G_n-D_n}(S^{(n)}_{\lfloor (n-1-G_n-D_n)^{1-\alpha}t\rfloor}-S^{(n)}_0)$.
\end{proof}

To prove Lemma \ref{lem: the cluster at 0}, we need the following lemmas.
\begin{lem}\label{lem: levy measure}
The L\'{e}vy measure $\Pi$ of the subordinator of the renewal density $u(x)=(\frac{2\sqrt{\kappa}}{1-e^{-2\sqrt{\kappa}x}})^{\alpha}$ is given by the following expression:
$$\Pi(\mathrm{d}t)=\,\mathrm{d}t\cdot \frac{1}{\pi}(1-\alpha)\sin(\alpha\pi)e^{2\sqrt{\kappa}(\alpha-1)t}\left(\frac{2\sqrt{\kappa}}{1-e^{-2\sqrt{\kappa}t}}\right)^{2-\alpha}.$$
\end{lem}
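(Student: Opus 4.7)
The plan is to verify the proposed Lévy density by matching Laplace exponents. For a driftless subordinator with no killing, its Lévy measure is uniquely determined by its Laplace exponent $\phi$, and the potential density $u$ satisfies $\int_0^\infty e^{-\lambda x} u(x)\,\mathrm{d}x = 1/\phi(\lambda)$. So I will (i) compute $\hat u(\lambda)$ explicitly, obtaining a ratio of Gamma functions; (ii) compute $\tilde\phi(\lambda) := \int_0^\infty (1-e^{-\lambda t})\Pi(\mathrm{d}t)$ for the candidate Lévy density on the right-hand side; and (iii) check $\hat u(\lambda)\tilde\phi(\lambda)=1$. Uniqueness of the Bernstein representation (together with $\phi(0)=0$ from the blow-up of $\hat u$ at $0$, and a check that the drift vanishes from the asymptotics at $+\infty$) then identifies $\Pi$.

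For step (i), setting $\beta=2\sqrt{\kappa}$ and substituting $s=e^{-\beta x}$ turns
\[
\hat u(\lambda)=\int_0^\infty e^{-\lambda x}\left(\tfrac{\beta}{1-e^{-\beta x}}\right)^\alpha\mathrm{d}x
\]
into $\beta^{\alpha-1}\int_0^1 s^{\lambda/\beta-1}(1-s)^{-\alpha}\,\mathrm{d}s = \beta^{\alpha-1}\Gamma(1-\alpha)\,\Gamma(\lambda/\beta)/\Gamma(\lambda/\beta+1-\alpha)$, a Beta function evaluation.

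For step (ii), the same substitution $s=e^{-\beta t}$ in the candidate $\Pi$ reduces $\tilde\phi(\lambda)$ to
\[
\tilde\phi(\lambda) = \frac{(1-\alpha)\sin(\alpha\pi)}{\pi}\,\beta^{1-\alpha}\,I(\mu),\qquad
I(\mu):=\int_0^1(1-s^\mu)\,s^{-\alpha}(1-s)^{\alpha-2}\,\mathrm{d}s,
\]
with $\mu=\lambda/\beta$. The main obstacle is that $I(\mu)$ is a difference of two divergent Beta-like integrals, so naive integration by parts or splitting both terms produces uncancelled boundary divergences. My plan is to avoid this entirely by expanding $(1-s)^{\alpha-2} = \sum_{n\ge 0}\Gamma(n+2-\alpha)/[\Gamma(2-\alpha)\,n!]\,s^n$, integrating term-by-term (each term is elementary and yields $\mu/[(n+1-\alpha)(n+1-\alpha+\mu)]$), then writing $1/(n+1-\alpha+\mu)=\int_0^1 u^{n-\alpha+\mu}\,\mathrm{d}u$ and swapping sum and integral. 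The interior sum resums via the binomial identity $\sum_{n\ge 0}\Gamma(n+1-\alpha)u^n/n! = \Gamma(1-\alpha)(1-u)^{\alpha-1}$ to produce another Beta integral, giving
\[
I(\mu) = \frac{\Gamma(\alpha)}{1-\alpha}\cdot\frac{\Gamma(\mu+1-\alpha)}{\Gamma(\mu)}.
\]

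For step (iii), inserting this into $\tilde\phi(\lambda)$ and applying Euler's reflection formula $\Gamma(\alpha)\Gamma(1-\alpha)=\pi/\sin(\alpha\pi)$ turns the prefactor $(1-\alpha)\sin(\alpha\pi)\,\Gamma(\alpha)/[\pi(1-\alpha)]$ into $1/\Gamma(1-\alpha)$, giving
\[
\tilde\phi(\lambda) = \frac{\beta^{1-\alpha}}{\Gamma(1-\alpha)}\cdot\frac{\Gamma(\lambda/\beta+1-\alpha)}{\Gamma(\lambda/\beta)} = \frac{1}{\hat u(\lambda)},
\]
as required. The uniqueness of the Lévy–Khintchine decomposition then forces the proposed density to be the Lévy density, concluding the proof. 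The delicate point is purely step (ii): justifying Fubini between the binomial series and the $\mathrm{d}u$-integral, which is straightforward because on any compact subinterval $u\in[0,1-\delta]$ the series converges uniformly and $u^{\mu-\alpha}$ is integrable near $0$ for $\mu>\alpha-1$, while integrability near $u=1$ follows from the absolute convergence of the original $I(\mu)$.
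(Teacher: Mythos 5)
Your proof is correct, and it takes a genuinely different route from the paper's. The paper works \emph{forward}: starting from $1/\Phi(\lambda)=\hat u(\lambda)=(2\sqrt\kappa)^{\alpha-1}\mathrm{Beta}(\lambda/2\sqrt\kappa,1-\alpha)$, it inverts this directly via the Beta identity $\mathrm{Beta}(x,y)\,\mathrm{Beta}(x+y,1-y)=\pi/(x\sin\pi y)$, obtaining $\Phi(\lambda)$ as a constant times $\lambda\,\mathrm{Beta}(\lambda/2\sqrt\kappa+1-\alpha,\alpha)$; the factor $\lambda$ in front, combined with the change of variable $y=e^{-2\sqrt\kappa u}$ inside the Beta integral, presents $\Phi$ in the canonical form $\lambda\int_0^\infty e^{-\lambda u}\bar\Pi(u)\,\mathrm du$, from which $\bar\Pi$ is read off and differentiated. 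You instead work \emph{backward} as a verification: you compute the Laplace exponent $\tilde\phi$ of the candidate Lévy density directly via the binomial series for $(1-s)^{\alpha-2}$, a $1/(n+1-\alpha+\mu)=\int_0^1 u^{n-\alpha+\mu}\mathrm du$ trick, and a resummation back to a Beta integral, then check $\hat u\cdot\tilde\phi\equiv 1$. What the paper's route buys is directness (no need to guess the answer and no interchange of an infinite sum with an integral); the crucial step is recognizing $\Phi$ as $\lambda$ times a Laplace transform. What your route buys is that it is essentially self-contained given the stated formula: you only need Tonelli (everything is nonnegative, so the interchanges are automatic) and the binomial series, and you sidestep the inverse-Laplace identification of $\bar\Pi$. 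Both proofs ultimately rely on Euler's reflection $\Gamma(\alpha)\Gamma(1-\alpha)=\pi/\sin(\pi\alpha)$ and on the drift vanishing (which you correctly note follows from $\hat u(0+)=\infty$ and the $\lambda\to\infty$ asymptotics of $\phi$), so they are of comparable depth. One small remark: since all summands and integrands in your step (ii) are nonnegative, you can invoke Tonelli outright and dispense with the uniform-convergence-on-compacts argument for the Fubini step; this makes the ``delicate point'' entirely routine.
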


\begin{lem}\label{lem:distribution of XTa}
Consider the subordinator $X^{(\kappa)}$ of the potential density $u(x)=\left(\frac{2\sqrt{\kappa}}{1-e^{-2\sqrt{\kappa}x}}\right)^{\alpha}$. For $a>0$, we have that $\mathbb{P}^{0}[X^{(\kappa)}_{T_{]a,\infty[}}=a]=0$ and that
\begin{align*}
\mathbb{P}^{0}[X^{(\kappa)}_{T_{]a,\infty[}}\in \mathrm{d}x]/\mathrm{d}x=&\int\limits_{z\in ]x-a,x[}u(x-z)\Pi(\mathrm{d}z)\\
=&\frac{\sqrt{\kappa}}{\pi}\sin(\alpha\pi)\frac{e^{\alpha\sqrt{\kappa}x}(\sinh(\sqrt{\kappa}a))^{1-\alpha}}{\sinh(\sqrt{\kappa}x)(\sinh(\sqrt{\kappa}(x-a)))^{1-\alpha}}.
\end{align*}
\end{lem}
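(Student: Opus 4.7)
The plan is to derive the first displayed identity from the bivariate first-passage formula for a subordinator, and then evaluate the resulting integral in closed form by a single well-chosen substitution.

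For the first equality I invoke the Pecherskii--Rogozin / Greenwood--Pitman compensation formula: for a subordinator with potential density $u$, L\'evy measure $\Pi$, and no drift, the joint law of the undershoot--overshoot pair is (for $0\le y\le a<x$)
\begin{equation*}
\mathbb{P}^{0}\bigl[X^{(\kappa)}_{T_{]a,\infty[}-}\in\mathrm{d}y,\ X^{(\kappa)}_{T_{]a,\infty[}}\in\mathrm{d}x\bigr]=u(y)\,\mathrm{d}y\,\Pi(\mathrm{d}x-y).
\end{equation*}
Integrating in $y\in[0,a]$ and changing variable $z=x-y$ yields the first line of the lemma. For the atom-free statement: Lemma \ref{lem: levy measure} shows $u(t)\sim t^{-\alpha}$ as $t\downarrow 0$, so the Laplace exponent satisfies $\phi(\lambda)\sim\lambda^{1-\alpha}/\Gamma(1-\alpha)$ as $\lambda\to\infty$, whence the drift $\lim_{\lambda\to\infty}\phi(\lambda)/\lambda$ vanishes. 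Thus $X^{(\kappa)}$ is a drift-free pure-jump subordinator, it does not creep across any fixed level, and $\mathbb{P}^{0}[X^{(\kappa)}_{T_{]a,\infty[}}=a]=0$.

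For the explicit evaluation, using $1-e^{-2\sqrt{\kappa}t}=2e^{-\sqrt{\kappa}t}\sinh(\sqrt{\kappa}t)$, I rewrite
\begin{equation*}
u(t)=\frac{\kappa^{\alpha/2}\,e^{\alpha\sqrt{\kappa}t}}{(\sinh(\sqrt{\kappa}t))^{\alpha}},\qquad\frac{\Pi(\mathrm{d}t)}{\mathrm{d}t}=\frac{(1-\alpha)\sin(\alpha\pi)}{\pi}\cdot\frac{\kappa^{(2-\alpha)/2}\,e^{\alpha\sqrt{\kappa}t}}{(\sinh(\sqrt{\kappa}t))^{2-\alpha}}.
\end{equation*}
Plugging these in, the two exponentials collapse into a global factor $e^{\alpha\sqrt{\kappa}x}$ and the constants aggregate into $\kappa(1-\alpha)\sin(\alpha\pi)/\pi$, reducing the problem to evaluating
\begin{equation*}
I(x,a)=\int_{0}^{a}\frac{\mathrm{d}y}{(\sinh(\sqrt{\kappa}y))^{\alpha}(\sinh(\sqrt{\kappa}(x-y)))^{2-\alpha}}.
\end{equation*}

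The key step is the substitution $s=\sinh(\sqrt{\kappa}y)/\sinh(\sqrt{\kappa}(x-y))$: the addition formula $\sinh(A+B)=\sinh A\cosh B+\cosh A\sinh B$ gives
\begin{equation*}
\mathrm{d}s=\frac{\sqrt{\kappa}\sinh(\sqrt{\kappa}x)}{\sinh^{2}(\sqrt{\kappa}(x-y))}\,\mathrm{d}y,
\end{equation*}
and the integrand transforms to $s^{-\alpha}\,\mathrm{d}s/(\sqrt{\kappa}\sinh(\sqrt{\kappa}x))$. Integrating $s$ from $0$ to $\sinh(\sqrt{\kappa}a)/\sinh(\sqrt{\kappa}(x-a))$ produces
\begin{equation*}
I(x,a)=\frac{1}{\sqrt{\kappa}(1-\alpha)\sinh(\sqrt{\kappa}x)}\left(\frac{\sinh(\sqrt{\kappa}a)}{\sinh(\sqrt{\kappa}(x-a))}\right)^{1-\alpha},
\end{equation*}
and recombining with the prefactor recovers the closed form stated in the lemma. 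The main conceptual obstacle is invoking the bivariate L\'evy-system identity together with the no-creeping property; both are classical for pure-jump drift-free subordinators and can be cited from Bertoin's monograph on L\'evy processes. Everything else is essentially one substitution in a trigonometric integral.
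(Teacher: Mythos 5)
Your proof is correct and follows essentially the same route as the paper: Bertoin's overshoot identity (Lemma 1.10 of his monograph / the Pecherskii--Rogozin formula), zero drift implying the absence of an atom at $a$, and direct evaluation of the resulting integral by a single substitution. The only cosmetic differences are the route to zero drift (the paper cites $U(0,0+)=\infty$ directly rather than your Tauberian estimate $\phi(\lambda)\sim\lambda^{1-\alpha}/\Gamma(1-\alpha)$), and the substitution chosen (the paper uses $t=(1-e^{-2\sqrt{\kappa}z})/(1-e^{-2\sqrt{\kappa}x})$ rather than your $s=\sinh(\sqrt{\kappa}y)/\sinh(\sqrt{\kappa}(x-y))$, both of which linearize the same integral); also note that the small-$t$ asymptotics $u(t)\sim t^{-\alpha}$ follows from the explicit formula for $u$ rather than from the lemma on the L\'evy measure.
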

\begin{proof}
The subordinator $X^{(\kappa)}$ has zero drift as $\lim\limits_{x\downarrow 0}U(0,x)=\infty$ by \cite[Theorem 5 in Chapter 3]{BertoinMR1406564}. Consequently, for any fixed $a>0$, by a result of H. Kesten \cite{KestenMR0272059} (see \cite[Proposition 1.9 (i)]{BertoinMR1746300}), for $a>0$,
$$\mathbb{P}^{0}[a\text{ belongs to the closure of the range of }X^{(\kappa)}]=0.$$
Hence, $\mathbb{P}^{0}[X^{(\kappa)}_{T_{]a,\infty[}}=a]=0$ for $a>0$. According to Lemma 1.10 in \cite{BertoinMR1746300}, for $x>a$,
$$\mathbb{P}^{0}[X^{(\kappa)}_{T_{]a,\infty[}}\in \mathrm{d}x]/\mathrm{d}x=\int\limits_{z\in ]x-a,x[}u(x-z)\Pi(\mathrm{d}z).$$
By Lemma \ref{lem: levy measure},
$$\Pi(\mathrm{d}z)=\,\mathrm{d}z\cdot\frac{1}{\pi}(1-\alpha)\sin(\alpha\pi)e^{2\sqrt{\kappa}(\alpha-1)z}\left(\frac{2\sqrt{\kappa}}{1-e^{-2\sqrt{\kappa}z}}\right)^{2-\alpha}.$$
Thus,
\begin{align*}
\mathbb{P}^{0}[X^{(\kappa)}_{T_{]a,\infty[}}\in \mathrm{d}x]/\mathrm{d}x=&\int\limits_{z\in
]x-a,x[}\frac{(2\sqrt{\kappa})^{\alpha}}{(1-e^{-2\sqrt{\kappa}(x-z)})^{\alpha}}\\
&\times\frac{1-\alpha}{\pi}\sin(\alpha\pi)e^{2\sqrt{\kappa}z(\alpha-1)}\frac{(2\sqrt{\kappa})^{2-\alpha}}{(1-e^{-2\sqrt{\kappa}z})^{2-\alpha}}\,\mathrm{d}z.
\end{align*}
By performing the change of variable $t=\frac{1-e^{-2\sqrt{\kappa}z}}{1-e^{-2\sqrt{\kappa}x}}$, we see that
\begin{align*}
\mathbb{P}^{0}[X^{(\kappa)}_{T_{]a,\infty[}}\in \mathrm{d}x]/\mathrm{d}x=&\frac{2\sqrt{\kappa}}{\pi}(1-\alpha)\sin(\alpha\pi)(1-e^{-2\sqrt{\kappa}x})^{-1}\\
&\times\int\limits_{\frac{1-e^{-2\sqrt{\kappa}(x-a)}}{1-e^{-2\sqrt{\kappa}x}}}^{1}(t^{-1}-1)^{-\alpha}t^{-2}\,\mathrm{d}t\\
=&\frac{\sqrt{\kappa}}{\pi}\sin(\alpha\pi)\frac{e^{\alpha\sqrt{\kappa}x}(\sinh(\sqrt{\kappa}a))^{1-\alpha}}{\sinh(\sqrt{\kappa}x)(\sinh(\sqrt{\kappa}(x-a)))^{1-\alpha}}.\qedhere
\end{align*}
\end{proof}

\begin{lem}\label{lem:YTa}
Let $Y^{(\kappa)}$ and $\mathbb{Q}^{0}$ be the same as in Lemma \ref{lem: subordinator bridge}. Fix a positive measurable function $f:[0,1]^2\rightarrow\mathbb{R}_{+}$ and $0<a,b<1$ such that $a+b<1$. Then,
\begin{multline*}
\mathbb{Q}^{0}\left[f(Y^{(\kappa)}_{T_{]a,\infty[}},1-Y^{(\kappa)}_{T_{]1-b,\infty[}-})1_{\{Y^{(\kappa)}_{T_{]a,\infty[}}<1-b\}}\right]\\
=\int\limits_{\mathclap{a<x<1-y<1-b<1}}f(x,y)\frac{\kappa}{\pi^2}\frac{\sin^{2}(\alpha\pi)(\sinh(\sqrt{\kappa}))^{\alpha}}{(\sinh(\sqrt{\kappa}(1-x-y)))^{\alpha}\sinh(\sqrt{\kappa}x)\sinh(\sqrt{\kappa}y)}\\
\times \left(\frac{\sinh(\sqrt{\kappa}a)\sinh(\sqrt{\kappa}b)}{\sinh(\sqrt{\kappa}(x-a))\sinh(\sqrt{\kappa}(y-b))}\right)^{1-\alpha}\,\mathrm{d}x\,\mathrm{d}y.
\end{multline*}
\end{lem}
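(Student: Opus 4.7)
\textbf{Proof proposal for Lemma \ref{lem:YTa}.}

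The plan is to reduce the computation under $\mathbb{Q}^{0}$ to one under the law of the unconditioned subordinator $X^{(\kappa)}$ via the Doob $h$-transform from Lemma \ref{lem: subordinator bridge}, and then evaluate everything using Lemma \ref{lem:distribution of XTa} applied \emph{twice}. Set $T_1=T_{]a,+\infty[}$ and $T_2=T_{]1-b,+\infty[}$. First, the $h$-transform identity in part 1 of Lemma \ref{lem: subordinator bridge} extends from fixed times $t$ to any stopping time $T$ bounded by $T_{]1,+\infty[}$ by the usual supermartingale/optional stopping argument for Doob transforms; applied with $T=T_2$ on the event $\{Y_{T_1}<1-b\}\subset\{T_1<T_2<\zeta\}$, it gives
\begin{equation*}
\mathbb{Q}^{0}\bigl[f(Y_{T_1},1-Y_{T_2-})1_{\{Y_{T_1}<1-b\}}\bigr]
=\frac{1}{u(1)}\mathbb{E}^{0}\bigl[f(X_{T_1},1-X_{T_2-})\,u(1-X_{T_2})\,1_{\{X_{T_1}<1-b,\,X_{T_2}<1\}}\bigr].
\end{equation*}

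Next I would apply the strong Markov property of $X^{(\kappa)}$ at $T_1$, together with the standard compensation formula for the undershoot/overshoot at a first passage time of a subordinator (e.g.\ \cite[Lemma 1.10]{BertoinMR1746300}), to write the joint law of $(X_{T_1},X_{T_2-},X_{T_2})$ restricted to $\{X_{T_1}<1-b\}$ as
\begin{equation*}
\mathbb{P}^{0}[X_{T_1}\in dx]\cdot u(z-x)\,\Pi(dw-z)\,dz\quad\text{on }\{a\le x\le z<1-b\le w\}.
\end{equation*}
Perform the change of variables $y=1-z$ (so $y\in(b,1-x)$) and then, inside the innermost integral, $v=w-z=w-(1-y)$:
\begin{equation*}
\int_{1-b}^{\infty}u(1-w)\,\Pi(dw-z)\;=\;\int_{y-b}^{y}u(y-v)\,\Pi(dv).
\end{equation*}
The crucial observation is that by Lemma \ref{lem:distribution of XTa} the right-hand side is exactly $\mathbb{P}^{0}[X_{T_{]b,+\infty[}}\in dy]/dy$. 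This is the engine of the argument: a single compensation-plus-substitution turns the integral over $\Pi$ back into a first-passage density, for which Lemma \ref{lem:distribution of XTa} supplies a fully explicit formula.

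Substituting the explicit forms of $\mathbb{P}^{0}[X_{T_1}\in dx]$ and $\mathbb{P}^{0}[X_{T_{]b,+\infty[}}\in dy]$ from Lemma \ref{lem:distribution of XTa} yields a density that is a product of two factors of the form $\frac{\sqrt{\kappa}}{\pi}\sin(\alpha\pi)e^{\alpha\sqrt{\kappa}x}(\sinh\sqrt{\kappa}a)^{1-\alpha}/[\sinh\sqrt{\kappa}x(\sinh\sqrt{\kappa}(x-a))^{1-\alpha}]$ (and analogously for $y,b$), multiplied by $u(1-x-y)/u(1)$. The remaining step is algebraic simplification using the identity
\begin{equation*}
u(t)=\left(\frac{\sqrt{\kappa}\,e^{\sqrt{\kappa}t}}{\sinh\sqrt{\kappa}t}\right)^{\alpha},
\qquad\text{whence}\qquad e^{\alpha\sqrt{\kappa}(x+y)}\,\frac{u(1-x-y)}{u(1)}=\left(\frac{\sinh\sqrt{\kappa}}{\sinh\sqrt{\kappa}(1-x-y)}\right)^{\alpha};
\end{equation*}
this cancellation is what makes the exponentials disappear and produces the stated $\sinh$-only expression.

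The hard part is not the opening $h$-transform step (which is routine) but rather spotting the substitution in Step 3 that re-expresses the inner Lévy-measure integral as another first-passage density. Without this recognition one would be forced to work directly with the explicit Lévy measure $\Pi(dv)$ from Lemma \ref{lem: levy measure}, and a direct reduction to the target hyperbolic form would be considerably more painful. Everything else reduces to bookkeeping: tracking the integration region $a<x$, $b<y$, $x+y<1$ (equivalent to $a<x<1-y<1-b<1$) and the algebraic identity above.
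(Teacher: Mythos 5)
Your proposal is correct and follows essentially the same route as the paper. Both proofs combine (i) the Doob $h$-transform from Lemma \ref{lem: subordinator bridge}, (ii) the strong Markov property at the first passage over $a$, (iii) Bertoin's compensation formula for the undershoot/overshoot at a first passage time, and (iv) the recognition that the resulting inner $\Pi$-integrals $\int_{]s-c,s[}u(s-z)\Pi(\mathrm{d}z)$ are exactly the first-passage densities of Lemma \ref{lem:distribution of XTa}. The only cosmetic difference is bookkeeping: the paper writes the full four-variable joint density $(z_1,z_2,z_3,z_4)$ (pre- and post-jump positions at both passage times) under $\mathbb{P}^0$ and applies the $h$-transform afterward, whereas you apply the $h$-transform first at the stopping time $T_2$ and work directly with $\mathbb{P}^0[X_{T_1}\in\mathrm{d}x]$; after the change of variables this yields the same two-variable integrand, so the two presentations coincide.
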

\begin{proof}
Let $X^{(\kappa)}$ be the subordinator with the potential density $$U(x,y)=1_{\{y>x\}}\left(\frac{2\sqrt{\kappa}}{1-e^{2\sqrt{\kappa}(y-x)}}\right)^{\alpha}$$
and $\mathbb{P}^{0}$ its law starting from $0$. By Lemma \ref{lem:distribution of XTa}, $\mathbb{P}^{0}[X^{(\kappa)}_{T_{]a,\infty[}}=a]=0$ for $a>0$. Next, according to Lemma 1.10 in \cite{BertoinMR1746300}, for $0\leq x<a<x+y$,
$$\mathbb{P}^{0}[X^{(\kappa)}_{T_{]a,\infty[}-}\in \mathrm{d}x, X^{(\kappa)}_{T_{]a,\infty[}}-X^{(\kappa)}_{T_{]a,\infty[}-}\in \mathrm{d}y]=u(x)\mathrm{d}x\Pi(\mathrm{d}y).$$
By applying the strong Markov property at time $T_{]a,\infty[}$ for $X^{(\kappa)}$, we see that
\begin{multline*}
\mathbb{P}^{0}\left[\phi(X^{(\kappa)}_{T_{]a,\infty[}-},X^{(\kappa)}_{T_{]a,\infty[}},X^{(\kappa)}_{T_{]1-b,\infty[}-},X^{(\kappa)}_{T_{]1-b,\infty[}})1_{\{X^{(\kappa)}_{T_{]a,\infty[}}<1-b\}}\right]\\
=\int\limits_{\mathclap{\substack{0<z_1<a<z_1+z_2\\0<z_3<1-b-z_1-z_2<z_3+z_4}}}\phi(z_1,z_1+z_2,z_1+z_2+z_3,z_1+z_2+z_3+z_4)u(z_1)\mathrm{d}z_1\Pi(\mathrm{d}z_2)u(z_3)\mathrm{d}z_3\Pi(\mathrm{d}z_4),
\end{multline*}
where $\phi$ is a positive measurable function. Therefore, for a positive measurable function $\phi$, we have
\begin{multline*}
\mathbb{Q}^{0}\left[\phi(X^{(\kappa)}_{T_{]a,\infty[}-},X^{(\kappa)}_{T_{]a,\infty[}},X^{(\kappa)}_{T_{]1-b,\infty[}-},X^{(\kappa)}_{T_{]1-b,\infty[}})1_{\{X^{(\kappa)}_{T_{]a,\infty[}}<1-b\}}\right]\\
=\mathbb{P}^{0}\left[\phi(X^{(\kappa)}_{T_{]a,\infty[}-},X^{(\kappa)}_{T_{]a,\infty[}},X^{(\kappa)}_{T_{]1-b,\infty[}-},X^{(\kappa)}_{T_{]1-b,\infty[}})\frac{u(1-X^{(\kappa)}_{T_{]1-b,\infty[}})}{u(1)}1_{\{X^{(\kappa)}_{T_{]1-b,\infty[}}<1,X^{(\kappa)}_{T_{]a,\infty[}}<1-b\}}\right]\\
=\int\limits_{\mathclap{0<z_1<a<z_1+z_2<z_1+z_2+z_3<1-b<z_1+z_2+z_3+z_4<1}}\phi(z_1,z_1+z_2,z_1+z_2+z_3,z_1+z_2+z_3+z_4)\\
\times\frac{u(1-z_1-z_2-z_3-z_4)}{u(1)}u(z_1)\mathrm{d}z_1\Pi(\mathrm{d}z_2)u(z_3)\mathrm{d}z_3\Pi(\mathrm{d}z_4).
\end{multline*}
Particularly,
\begin{multline*}
\mathbb{Q}^{0}\left[f(Y^{(\kappa)}_{T_{]a,\infty[}},1-Y^{(\kappa)}_{T_{]1-b,\infty[}-})1_{\{Y^{(\kappa)}_{T_{]a,\infty[}}<1-b\}}\right]\\
=\int\limits_{\mathclap{0<z_1<a<z_1+z_2<z_1+z_2+z_3<1-b<z_1+z_2+z_3+z_4<1}}f(z_1+z_2,1-z_1-z_2-z_3)\\
\times\frac{u(1-z_1-z_2-z_3-z_4)}{u(1)}u(z_1)\mathrm{d}z_1\Pi(\mathrm{d}z_2)u(z_3)\mathrm{d}z_3\Pi(\mathrm{d}z_4).
\end{multline*}
By performing the change of variables $x=z_1+z_2,y=1-z_1-z_2-z_3$, we have that
\begin{multline*}
\mathbb{Q}^{0}\left[f(Y^{(\kappa)}_{T_{]a,\infty[}},1-Y^{(\kappa)}_{T_{]1-b,\infty[}-})1_{\{Y^{(\kappa)}_{T_{]a,\infty[}}<1-b\}}\right]\\
=\int\limits_{\mathclap{0<x-z_2<a<x<1-y<1-b<1-y+z_4<1}}f(x,y)\frac{u(1-x-y)}{u(1)}\mathrm{d}x\mathrm{d}y\cdot u(y-z_4)u(x-z_2)\Pi(\mathrm{d}z_2)\Pi(\mathrm{d}z_4)\\
=\int\limits_{\mathclap{a<x<1-y<1-b<1}}f(x,y)\frac{u(1-x-y)}{u(1)}\mathrm{d}x\mathrm{d}y\\
\times\int\limits_{\mathclap{z_2\in]x-a,x[,z_4\in]y-b,y[}}u(y-z_4)u(x-z_2)\Pi(\mathrm{d}z_2)\Pi(\mathrm{d}z_4).
\end{multline*}
Finally, the calculation is finished by using Lemma \ref{lem:distribution of XTa}.
\end{proof}

\begin{proof}[Proof of Lemma \ref{lem: the cluster at 0}]
Firstly, we deduce from Lemma \ref{lem: limit of the loops passing 1} the density $\rho(a,b)$ of $A,B$:
$$\rho(a,b)=\kappa\alpha(\alpha+1)\left(\frac{2\cosh(\sqrt{\kappa})}{\sinh(\sqrt{\kappa})}\right)^{\alpha}\frac{(\sinh(\sqrt{\kappa a})\sinh(\sqrt{\kappa}b))^{\alpha}}{(\sinh(\sqrt{\kappa}(a+b)))^{\alpha+2}}.$$
Take an independent subordinator bridge $Y^{(\kappa)}$ defined in Lemma \ref{lem: subordinator bridge}. Set
$$(g,d)=(Y^{(\kappa)}_{T_{]B,\infty[}},1-Y^{(\kappa)}_{T_{]1-A,\infty[}-}).$$
By Lemma \ref{lem: limit of the loops passing 1} of the loop clusters formed by $\mathcal{DL}_{\alpha,3}^{(n)}$, a combination of Proposition \ref{conditioned renewal process} and Proposition \ref{convergence of conditioned renewal processes} about the loop clusters formed by $\mathcal{DL}_{\alpha,1}^{(n)}$, the independence between $\mathcal{DL}^{(n)}_{\alpha,1}$ and $\mathcal{DL}^{(n)}_{\alpha,3}$ and the last statement in Lemma \ref{lem: subordinator bridge},
$$\lim\limits_{n\rightarrow\infty}\mathbb{P}[\text{All the edges are not covered by the loops in }\mathcal{DL}^{(n)}_{\alpha,1}\cup\mathcal{DL}^{(n)}_{\alpha,3}]=\mathbb{P}[g+d<1].$$
Moreover, conditionally on the existence of closed edges, $(G_n/n,D_n/n)$ converges in distribution towards $(G,D)$, whose density equals to
$$1_{\{x>0,y>0,x+y<1\}}\frac{\mathbb{P}[g\in \mathrm{d}x,d\in \mathrm{d}y]/\mathrm{d}x\mathrm{d}y}{\mathbb{P}[g+d<1]}.$$
By Lemma \ref{lem:YTa}, for $x>0,y>0,x+y<1$,
\begin{multline*}
\mathbb{P}[g\in \mathrm{d}x,d\in \mathrm{d}y|A=a,B=b]\\
=\mathrm{d}x\mathrm{d}y\cdot1_{\{a<x<1-y<1-b\}}\frac{\kappa}{\pi^2}\frac{\sin^{2}(\alpha\pi)(\sinh(\sqrt{\kappa}))^{\alpha}}{(\sinh(\sqrt{\kappa}(1-x-y)))^{\alpha}\sinh(\sqrt{\kappa}x)\sinh(\sqrt{\kappa}y)}\\
\times \left(\frac{\sinh(\sqrt{\kappa}a)\sinh(\sqrt{\kappa}b)}{\sinh(\sqrt{\kappa}(x-a))\sinh(\sqrt{\kappa}(y-b))}\right)^{1-\alpha}.
\end{multline*}
Therefore,
\begin{align*}
\mathbb{P}[g\in \mathrm{d}x,d\in \mathrm{d}y]=&\int\limits_{0<a<x,0<b<y}\rho(a,b)\mathbb{P}[g\in \mathrm{d}x,d\in \mathrm{d}y|A=a,B=b]\,\mathrm{d}a\,\mathrm{d}b\\
=&\mathrm{d}x\mathrm{d}y\cdot\int\limits_{0<a<x,0<b<y}\frac{\kappa^2}{\pi^2}\frac{\alpha(\alpha+1)\sin^2(\alpha\pi)(2\cosh\sqrt{\kappa})^{\alpha}}{(\sinh(\sqrt{\kappa}(1-x-y)))^{\alpha}\sinh(\sqrt{\kappa}x)\sinh(\sqrt{\kappa}y)}\\
&\times\frac{\sinh(\sqrt{\kappa}a)\sinh(\sqrt{\kappa}b)\,\mathrm{d}a\,\mathrm{d}b}{(\sinh(\sqrt{\kappa}(a+b)))^{\alpha+2}(\sinh(\sqrt{\kappa}(x-a))\sinh(\sqrt{\kappa}(y-b)))^{1-\alpha}}.
\end{align*}
We make a change of variable as follows: $$p=\frac{(1-e^{-2\sqrt{\kappa}a})(1-e^{-2\sqrt{\kappa}x})}{e^{-2\sqrt{\kappa}a}-e^{-2\sqrt{\kappa}x}}\text{ and }q=\frac{(1-e^{-2\sqrt{\kappa}b})(1-e^{-2\sqrt{\kappa}y})}{e^{-2\sqrt{\kappa}b}-e^{-2\sqrt{\kappa}y}}.$$
Accordingly,
\begin{align*}
\mathbb{P}[g\in \mathrm{d}x,d\in \mathrm{d}y]=&\mathrm{d}x\mathrm{d}y\cdot\frac{1}{\pi^2}\frac{2^{2\alpha-2}\kappa\alpha(\alpha+1)\sin^2(\alpha\pi)(\cosh\sqrt{\kappa})^{\alpha}}{(\sinh(\sqrt{\kappa}(1-x-y)))^{\alpha}(\sinh(\sqrt{\kappa}x)\sinh(\sqrt{\kappa}y))^{2-\alpha}}\\
&\times\int\limits_{p,q>0}\frac{pq\cdot \mathrm{d}p\mathrm{d}q}{\left(\frac{1-e^{-2\sqrt{\kappa}(x+y)}}{(1-e^{-2\sqrt{\kappa}x})(1-e^{-2\sqrt{\kappa}y})}pq+p+q\right)^{\alpha+2}}.
\end{align*}
For the simplicity of notation, set $\delta=\frac{1-e^{-2\sqrt{\kappa}(x+y)}}{(1-e^{-2\sqrt{\kappa}x})(1-e^{-2\sqrt{\kappa}y})}$. By performing the change of variable $z=\frac{p}{\delta pq+p+q}$,
\begin{align*}
\int\limits_{p,q>0}\frac{pq\cdot \mathrm{d}p\mathrm{d}q}{\left(\frac{1-e^{-2\sqrt{\kappa}(x+y)}}{(1-e^{-2\sqrt{\kappa}x})(1-e^{-2\sqrt{\kappa}y})}pq+p+q\right)^{\alpha+2}}=&\int\limits_{p,q>0}\frac{pq\,\mathrm{d}p\,\mathrm{d}q}{(p+q+\delta pq)^{\alpha+2}}\\
=&\int\limits_{p>0,z\in[0,1]}\frac{p^{1-\alpha}}{(1+\delta p)^2}z^{\alpha-1}(1-z)\,\mathrm{d}p\,\mathrm{d}z\\
=&\frac{1}{\alpha(\alpha+1)}\int\limits_{0}^{\infty}\frac{p^{1-\alpha}}{(1+\delta p)^2}\,\mathrm{d}p.
\end{align*}
We take $w=\frac{1}{1+\delta p}$:
\begin{align*}
\frac{1}{\alpha(\alpha+1)}\int\limits_{0}^{\infty}\frac{p^{1-\alpha}}{(1+\delta p)^2}\,\mathrm{d}p=&\frac{1}{\alpha(\alpha+1)\delta^{2-\alpha}}\int\limits_{0}^{1}w^{\alpha-1}(1-w)^{1-\alpha}\,dw\\
=&\frac{1}{\alpha(\alpha+1)\delta^{2-\alpha}}\text{Beta}(2-\alpha,\alpha)\\
=&\frac{1-\alpha}{\alpha(\alpha+1)\delta^{2-\alpha}}\text{Beta}(1-\alpha,\alpha).
\end{align*}
By Euler's reflection formula, $\text{Beta}(1-\alpha,\alpha)=\frac{\pi}{\sin(\pi\alpha)}$. Thus, for $x>0,y>0,x+y<1$,
$$\mathbb{P}[g\in \mathrm{d}x,d\in \mathrm{d}y]=\mathrm{d}x\mathrm{d}y\cdot\frac{\sin(\alpha\pi)}{\pi}\frac{2^{\alpha}(1-\alpha)\kappa(\cosh\sqrt{\kappa})^{\alpha}}{\left[\sinh(\sqrt{\kappa}(1-x-y))\right]^{\alpha}\left[\sinh(\sqrt{\kappa}(x+y))\right]^{2-\alpha}}.$$

Denote by $Pr$ the quantity $\int\limits_{x>0,y>0,x+y<1}\mathbb{P}[g\in \mathrm{d}x,d\in \mathrm{d}y]$. Then,
\begin{align*}
Pr=&\int\limits_{x>0,y>0,x+y<1}\frac{\sin(\alpha\pi)}{\pi}\frac{\kappa(1-\alpha)(2\cosh\sqrt{\kappa})^{\alpha}\,\mathrm{d}x\,\mathrm{d}y}{[\sinh(\sqrt{\kappa}(1-x-y))]^{\alpha}[\sinh(\sqrt{\kappa}(x+y))]^{2-\alpha}}\\
=&\int\limits_{0<x<z<1}\frac{\sin(\alpha\pi)}{\pi}\frac{\kappa(1-\alpha)(2\cosh\sqrt{\kappa})^{\alpha}\,\mathrm{d}x\,\mathrm{d}z}{[\sinh(\sqrt{\kappa}(1-z))]^{\alpha}[\sinh(\sqrt{\kappa}z)]^{2-\alpha}}\\
=&\int\limits_{0}^{1}\frac{\sin(\alpha\pi)}{\pi}\frac{\kappa(1-\alpha)(2\cosh\sqrt{\kappa})^{\alpha}z\,\mathrm{d}z}{[\sinh(\sqrt{\kappa}(1-z))]^{\alpha}[\sinh(\sqrt{\kappa}z)]^{2-\alpha}}.
\end{align*}
Take $s=\frac{1-e^{-2\sqrt{\kappa}z}}{1-e^{-2\sqrt{\kappa}}}$:
\begin{align*}
Pr=&\frac{\sin(\alpha\pi)}{\pi}(1-\alpha)(2\cosh\sqrt{\kappa})^{\alpha}\frac{e^{-\alpha\sqrt{\kappa}}}{1-e^{-2\sqrt{\kappa}}}\\
&\times\int\limits_{0}^{1}s^{\alpha-2}(1-s)^{-\alpha}(-\log(1-(1-e^{-2\sqrt{\kappa}})s))\,\mathrm{d}s.
\end{align*}
By Taylor expansion, $-\log(1-(1-e^{-2\sqrt{\kappa}})s)=\sum\limits_{n=1}^{\infty}\frac{1}{n}(1-e^{-2\sqrt{\kappa}})^ns^n$, hence
\begin{align*}
Pr=&\frac{\sin(\alpha\pi)}{\pi}(1-\alpha)(2\cosh\sqrt{\kappa})^{\alpha}\frac{e^{-\alpha\sqrt{\kappa}}}{1-e^{-2\sqrt{\kappa}}}\sum\limits_{n\geq 1}\int\limits_{0}^{1}\frac{(1-e^{-2\sqrt{\kappa}})^n}{n}s^{\alpha-2+n}(1-s)^{-\alpha}\,\mathrm{d}s\\
=&\frac{\sin(\alpha\pi)}{\pi}(1-\alpha)(2\cosh\sqrt{\kappa})^{\alpha}\frac{e^{-\alpha\sqrt{\kappa}}}{1-e^{-2\sqrt{\kappa}}}\sum\limits_{n\geq 1}\frac{(1-e^{-2\sqrt{\kappa}})^n}{n}\text{Beta}(\alpha+n-1,1-\alpha)\\
=&\frac{\sin(\alpha\pi)}{\pi}(1-\alpha)\Gamma(1-\alpha)(2\cosh\sqrt{\kappa})^{\alpha}\frac{e^{-\alpha\sqrt{\kappa}}}{1-e^{-2\sqrt{\kappa}}}\sum\limits_{n\geq 1}\frac{(1-e^{-2\sqrt{\kappa}})^{n}}{n!}\Gamma(\alpha+n-1).
\end{align*}
We have
\begin{align*}
\sum\limits_{n\geq 1}\frac{(1-e^{-2\sqrt{\kappa}})^{n}}{n!}\Gamma(\alpha+n-1)=&\sum\limits_{n\geq 1}\frac{(1-e^{-2\sqrt{\kappa}})^{n}}{n!}\int\limits_{0}^{\infty}e^{-t}t^{\alpha-2+n}\,\mathrm{d}t\\
=&\int\limits_{0}^{\infty}e^{-t}t^{\alpha-2}(\sum\limits_{n\geq 1}\frac{(1-e^{-2\sqrt{\kappa}})^{n}t^{n}}{n!})\,\mathrm{d}t\\
=&\int\limits_{0}^{\infty}(e^{-e^{-2\sqrt{\kappa}}t}-e^{-t})t^{\alpha-2}\,\mathrm{d}t.
\end{align*}
By integration by parts,
\begin{align*}
\sum\limits_{n\geq 1}\frac{(1-e^{-2\sqrt{\kappa}})^{n}}{n!}\Gamma(\alpha+n-1)=&\left.(e^{-e^{-2\sqrt{\kappa}}t}-e^{-t})\frac{t^{\alpha-1}}{\alpha-1}\right|^{\infty}_{0}\\
&-\frac{1}{\alpha-1}\int\limits_{0}^{\infty}(e^{-t}-e^{-2\sqrt{\kappa}}e^{-e^{-2\sqrt{\kappa}}t})t^{\alpha-1}\,\mathrm{d}t\\
=&\frac{\Gamma(\alpha)(1-e^{-2\sqrt{\kappa}(1-\alpha)})}{1-\alpha}.
\end{align*}
Hence,
$$\mathbb{P}[g+d<1]=\frac{(2\cosh\sqrt{\kappa})^{\alpha}\sinh(\sqrt{\kappa}(1-\alpha))}{\sinh\sqrt{\kappa}}.$$
Finally, one can deduce the distribution of $(G,D)$.
\end{proof}

\section{Informal relation with convergence of loop-soups}
In this section, we would like to give informal remarks of the previous results from the point of view of the scaling limit of the loop-soup. Please refer to \cite{titus} for the Markovian loop-soup of one dimensional diffusions.

Firstly, let us give an informal explanation of the convergence result for the closed edges in the loop cluster model on $\mathbb{N}$ which is proved in \cite{Markovian-loop-clusters-on-graphs}.

It is known that the Brownian loop-soup is the scaling limit of simple random walk loop-soup. Intuitively, the scaling limit of the closed edges probably\footnote{There is not an immediate consequence of the convergence of loop-soup. That's why our explanation stays informal.} has some relation with the zero set of the occupation field of the Brownian loop. As an application of \cite[Proposition 4.5]{titus}, the occupation field of Brownian loop-soup with killing rate $\frac{\kappa}{2}$ within $]0,\infty[$ is a homogeneous branching process with immigration. It is the solution of the following SDE:
$$\mathrm{d}X_t=2\sqrt{X_t}\mathrm{d}B_t-2\sqrt{\kappa}X_t\,\mathrm{d}t+2\alpha\,\mathrm{d}t,t\in[0,\infty[,$$
where $B$ is a Brownian motion and $X_{0}=0$. (It belongs to the Cox-Ingersoll-Ross (CIR) family of diffusions which could be viewed as a generalization of squared Bessel process. More precisely, it is a radial Ornstein-Uhlenbeck process of dimension $2\alpha$ with parameter $-\sqrt{\kappa}$, see \cite{AnjaMR1997032}.) To be more precise, when we apply  Proposition 4.5 in \cite{titus}, we take the non-increasing positive harmonic function to be $u_{\downarrow}(x)=e^{-\sqrt{\kappa}x}$ and take the non-decreasing positive harmonic function to be $u_{\uparrow}(x)=\frac{2}{\sqrt{\kappa}}\sinh(\sqrt{\kappa}x)$ such that the Green function density with respect to the Lebesgue measure $G(x,y)$ is given by $G(x,y)=u_{\uparrow} (x)u_{\downarrow} (y)$ for $x\leq y$. (This normalization is required when applying Proposition 4.5 in \cite{titus}). We see that $w(x)=\operatorname{Wronskian}(u_{\downarrow},u_{\uparrow})=2$. One can check that the zero set is given by the range of the subordinator with potential density 
$U(x,y)=1_{\{y>x\}}\left(\frac{2\sqrt{\kappa}}{1-e^{-2\sqrt{\kappa}(y-x)}}\right)^{\alpha}$, see e.g. \cite[Proposition 2.2]{BertoinMR1746300}.

Next, we consider the loop cluster over a discrete interval which is considered in this article. If the approximation by Brownian loop-soup within $]0,1[$ works, then we expect that the limit distribution of the closed edges is the zero set of the occupation field of this Brownian loop-soup. By Proposition 4.5 of \cite{titus}, we know that the occupation field over the interval $]0,1[$ indexed by the position $t\in]0,1[$ is the solution of the following SDE:
$$\mathrm{d}Y_t=2\sqrt{Y_t}\mathrm{d}B_t+\left(2\alpha-\frac{\cosh(\sqrt{\kappa}(1-t))}{\sinh(\sqrt{\kappa}(1-t))}Y_t\right)\,\mathrm{d}t,t\in[0,1].$$
In fact, it is the bridge of a squared radial OU process of dimension $2\alpha$ of parameter $-\sqrt{\kappa}$ from $0$ to $0$ of fixed time duration $1$. Please refer to \cite{markovianbridge} for Markovian bridge and refer to \cite{AnjaMR1997032} for the transition density of squared radial OU process and its relationship with squared Bessel process. Let $D_t$ be the first time of hitting $0$ after time $t$. Then, the Radon-Nikodym derivative of the bridge process over the squared radial OU process is
$$1_{\{D_t<1\}}\left(\frac{1-e^{-\sqrt{\kappa}}}{1-e^{-2\sqrt{\kappa}(1-D_t)}}\right)^{\alpha},$$
restricted on the sub-$\sigma$-field up to time $D_t$. This is exactly the same as $\frac{U(D_t,1)}{U(0,1)}$, which is used to construct our subordinator bridge. Then, one can check that the zero set of the bridge of the squared radial OU process agrees with the range of the conditioned subordinator defined in Lemma \ref{lem: subordinator bridge}.

Finally, we would like to point out the way to get the limit distribution of $(A,B)$ in Lemma \ref{lem: limit of the loops passing 1} from the point of view of Brownian loops. By the structure of Poisson random measure, it is enough to check this for $\alpha=1$. In this case, there is a connection between the loops passing through a fixed point and the Poisson point process of excursions at the same point, see e.g. \cite{loop}, \cite{titus}. For $\alpha=1$, they agree with each other. Accordingly, the distribution of $[-A,B]$ is exactly the random interval covered by these excursions under the condition that they don't cover $-1$ nor $1$. The condition of avoiding $-1$ and $1$ only affects the joint density of $(A,B)$ up to a normalization constant. Therefore, we could remove this restriction for the moment. The total local time at $0$ is an exponential variable with expectation $G(x,x)=1/\sqrt{\kappa}$ since the excursions at $0$ form a Poisson point process. The occupation time (total local time) indexed by the position $x\in]-\infty,\infty[$ forms a two-sided process, the part on the left hand side of $0$ is denoted by $(U_{-x},x\geq 0)$ under time reversal and the right part is denoted by $(V_x,x\geq 0)$. By Ray-Knight theorem for diffusions, conditioned on the total local time at $0$, $U$ and $V$ are two independent copies of squared radial OU processes of dimension zero and parameter $-\sqrt{\kappa}$, see e.g. Proposition 4.1 \cite{titus}. Thus, it is enough to compute the first hitting time of $0$, and then integrate them with respect to the total local time. The density of the first hitting time of $0$ for our squared radial OU process is given by
$$t\rightarrow \frac{x^2}{2}\left(\frac{\sqrt{\kappa}}{\sinh(\sqrt{\kappa t})}\right)^{2}\exp\{\frac{\sqrt{\kappa}}{2}x^2(1-\coth(\sqrt{\kappa}t))\},$$
see e.g. \cite{ElworthyMR1725406} (Corollary 3.19).
Finally, we get the joint density of the first hitting times of $0$ for $U$ and $V$. We see that it is exactly the same density as the limit distribution of $(A_n/n,B_n/n)$ as $n\rightarrow\infty$ up to a normalization constant, see Lemma \ref{lem: limit of the loops passing 1}.

\section{Appendix}\label{sect: appendix}
\subsection{Proof of Lemma \ref{lem: subordinator bridge}}
\begin{enumerate}
\item The subordinator $(X^{(\kappa)}_t,t\geq 0)$ has the potential density $U(x,y)=\left(\frac{2\sqrt{\kappa}}{1-e^{-2\sqrt{\kappa}(y-x)}}\right)^{\alpha}$ for $y>x$. When $y$ tends to $x$, $U(x,y)$ tends to $\infty$. As a consequence, the drift coefficient $d=0$, see Proposition 1.7 in \cite{BertoinMR1746300}. It is proved by Kesten \cite{KestenMR0272059} that for a fixed $x>0$, $x$ does not belong to the range of the subordinator with probability $1$, see Proposition 1.9 in \cite{BertoinMR1746300}. By applying the strong Markov property at a stopping time $S$,
\begin{multline*}
\mathbb{E}^{0}[f(X^{\kappa}_s,s\in[0,S])1_{\{S<T_{]1,+\infty[}\}},X^{(\kappa)}_{T_{]1,+\infty[-}}\in \mathrm{d}b]\\
=\mathbb{E}^{0}\left[f(X^{\kappa}_s,s\in[0,S])1_{\{S<T_{]1,+\infty[}\}}\mathbb{E}^{X^{(\kappa)}_S}[X^{(\kappa)}_{T_{]1,+\infty[-}}\in \mathrm{d}b]\right].
\end{multline*}
By \cite[Lemma 1.10]{BertoinMR1746300}, we get that
\begin{itemize}
\item $\mathbb{E}^{X^{(\kappa)}_S}[X^{(\kappa)}_{T_{]1,+\infty[-}}\in \mathrm{d}b]=\bar{\Pi}(1-b)u(b-X^{(\kappa)}_S)\,\mathrm{d}b$,\footnote{Here, $\bar{\Pi}$ represents the tail of the L\'{e}vy measure of the subordinator.}
\item $\mathbb{E}^{0}[X^{(\kappa)}_{T_{]1,+\infty[-}}\in \mathrm{d}b]=\bar{\Pi}(1-b)u(b)\,\mathrm{d}b=\frac{u(b)}{u(b-X^{(\kappa)}_s)}\mathbb{P}^{X_s^{(\kappa)}}[X_{T_{]1,+\infty[}-}^{(\kappa)}\in \mathrm{d}b].$
\end{itemize}
Hence,
\begin{multline*}
\mathbb{E}^{0}\left[f(X^{\kappa}_s,s\in[0,S])1_{\{S<T_{]1,+\infty[}\}}\mathbb{E}^{X^{(\kappa)}_S}[X^{(\kappa)}_{T_{]1,+\infty[-}}\in \mathrm{d}b]\right]\\
=\mathbb{E}^{0}[X^{(\kappa)}_{T_{]1,+\infty[}}\in \mathrm{d}b]\mathbb{E}^{0}\left[f(X^{\kappa}_s,s\in[0,S])1_{\{S<T_{]1,+\infty[}\}}\frac{u(b-X^{(\kappa)}_{S})}{u(b)}\right].
\end{multline*}
In particular, for a fixed time $t$, we have that
\begin{multline*}\label{eq:4}
\mathbb{E}^{0}[f(X^{\kappa}_s,s\in[0,t])1_{\{t<T_{]1,+\infty[}\}}|X^{(\kappa)}_{T_{]1,+\infty[-}}=1]\\
=\mathbb{E}^{0}\left[f(X^{\kappa}_s,s\in[0,t])1_{\{t<T_{]1,+\infty[}\}}\frac{u(1-X^{(\kappa)}_t)}{u(1)}\right].\tag{\text{$*$}}
\end{multline*}
\item It is enough for us to show that $x\rightarrow u(1-x)=U(x,1)$ is excessive. The rest will follow from the classical results on the Doob's $h$-transform, see Chapter 11 of \cite{ChungMR2152573}. Take a positive function $g$,  we have $P^{(\kappa)}_tUg=\int\limits_{t}^{\infty}P^{(\kappa)}_sg\,\mathrm{d}s$ and $Ug=\int\limits_{0}^{\infty}P^{(\kappa)}_sg\,\mathrm{d}s$. Thus, for all positive function $g$, we have $P^{(\kappa)}_tUg\leq Ug$ and $P^{(\kappa)}_tUg$ increases to $Ug$ as $t$ decreases to $0$. As a consequence, except for a set $N$ of $z$ of zero Lebesgue measure, $y\rightarrow u(y,z)$ is an excessive function, i.e.
    \begin{itemize}
    \item $\int P^{(\kappa)}_t(x,\mathrm{d}y)u(y,z)\leq u(x,z)$,
    \item $\lim\limits_{t\rightarrow 0}P^{(\kappa)}_t(x,\mathrm{d}y)u(y,z)=u(x,z)$.
    \end{itemize}
    Take a decreasing sequence $(z_n)_n$ with limit $1$ which is outside of the negligible set $N$. As the increasing limit of a sequence of excessive functions $y\rightarrow u(y,z_n)$, $y\rightarrow u(y,1)$ is excessive.
\item Before the proof, we would like to give a short explanation. From the symmetry of the loop model on the discrete segment, the graphs of the conditional renewal processes are centrosymmetric. Therefore, as the scaling limit, the conditional subordinator has a centrosymmetric graph. Thus, $Y^{(\kappa)}_{\zeta-}=1$ is equivalent to $Y^{(\kappa)}_{0+}=0$ which is obviously true.

    \bigskip
    In the following, we will not use the discrete approximation described above. Instead, we will prove that $\mathbb{Q}^{x}[X_{T_{[1-\delta,\infty[}}\in ]0,1[]=\mathbb{Q}^{x}[X_{T_{[x+\delta,\infty[}}\in ]0,1[]$ which is motivated by the idea of time reversal.

Let's begin to prove $Y^{(\kappa)}_{\zeta-}=1$. To prove this, it is enough to show that
$$\mathbb{Q}^{x}[T_{[1-\delta,\infty[}<\zeta]=1\text{ for all }\delta>0.$$
By applying Theorem 11.9 of \cite{ChungMR2152573} to the stopping time $T_{[1-\delta,\infty[}$, we get that
\begin{equation*}
\mathbb{Q}^{x}[T_{[1-\delta,\infty[}<\zeta]=\mathbb{P}^{x}\left[T_{[1-\delta,\infty[}<T_{]1,+\infty[},\frac{u(1-X_{T_{[1-\delta,\infty[}})}{u(1-x)}\right].
\end{equation*}
If $X$ follows the law $\mathbb{P}^{0}$, then $X+x$ has the law $\mathbb{P}^x$. Therefore, the above quantity equals to
$$\mathbb{P}^{0}\left[T_{[1-x-\delta,\infty[}<T_{]1-x,\infty[},\frac{u(1-x-X_{T_{[1-x-\delta,\infty[}})}{u(1-x)}\right].$$
By Lemma 1.10 in \cite{BertoinMR1746300}, for $0\leq a<1-x-\delta\leq a+b$, we have that
$$\mathbb{P}^{0}[X_{T_{[1-x-\delta,\infty[}-}\in \mathrm{d}a,X_{T_{[1-x-\delta,\infty[}}-X_{T_{[1-x-\delta,\infty[}-}\in \mathrm{d}b]=u(a)\,\mathrm{d}a\Pi(\mathrm{d}b).$$
Consequently,
$$\mathbb{Q}^{x}[T_{[1-\delta,\infty[}<\zeta]=\int\limits_{0<a<1-x-\delta<a+b<1-x} \frac{u(1-x-a-b)}{u(1-x)}u(a)\,\mathrm{d}a\Pi(\mathrm{d}b).$$
By performing the change of variable $c=1-x-a-b$, we see that
\begin{align*}
\mathbb{Q}^{x}[T_{[1-\delta,\infty[}<\zeta]=&\int\limits_{0<c<\delta<c+b<1-x} \frac{u(c)}{u(1-x)}u(1-x-c-b)\,\mathrm{d}c\Pi(\mathrm{d}b)\\
=&\mathbb{P}^{0}\left[T_{[\delta,\infty[}<T_{]1-x,\infty[},\frac{u(1-x-X_{T_{[\delta,\infty[}})}{u(1-x)}\right]\\
=&\mathbb{P}^{x}\left[T_{[x+\delta,\infty[}<T_{]1,+\infty[},\frac{u(1-X_{T_{[x+\delta,\infty[}})}{u(1-x)}\right]\\
=&\mathbb{Q}^{x}[T_{[x+\delta,\infty[}<\zeta].
\end{align*}
By the right-continuity of the path, $\lim\limits_{\delta\rightarrow 0}\mathbb{Q}^{x}[T_{[x+\delta,\infty[}<\zeta]=1$. Hence,
$$\lim\limits_{\delta\rightarrow 0}\mathbb{Q}^{x}[T_{[1-\delta,\infty[}<\zeta]=\lim\limits_{\delta\rightarrow 0}\mathbb{Q}^{x}[T_{[x+\delta,\infty[}<\zeta]=1.$$
Since $a\rightarrow\mathbb{Q}^{x}[T_{[x+a,\infty[}<\zeta]$ is non-increasing, we must have
$$\mathbb{Q}^{x}[T_{[y,\infty[}<\zeta]=1\text{ for }y\in[x,1[.$$
\item We know that $P^{(\kappa)}_t$ is a Feller semi-group. For $f\in C_K([0,1[)$, $x\rightarrow Q^{(\kappa)}_tf(x)$ belongs to $C_{K}([0,1[)$. ($C_K([0,1[)$ denotes the collection of compact supported continuous functions over $[0,1[$ and
    $$C_0([0,1[)=\{f:[0,1[\rightarrow\mathbb{R}: f\text{ is continuous and }\lim\limits_{x\rightarrow 1}f(y)=0\}.\text{)}$$
    By the Markov property of the semi-group $(Q^{(\kappa)}_t)_{t\geq 0}$, $||Q^{(\kappa)}_tf||_{\infty}\leq ||f||_{\infty}$ for every $f\in C_{0}([0,1])$. Thus, we have
    $Q^{(\kappa)}_tf=\lim\limits_{n\rightarrow\infty}Q^{(\kappa)}_t(f|_{[0,1-1/n[})\in C_0([0,1[)$. For $x\in[0,1[$ and $f\in C_0([0,1[)$, $$\lim\limits_{t\rightarrow 0}Q^{(\kappa)}_tf(x)=\lim\limits_{t\rightarrow 0}\mathbb{P}^{x}\left[1_{\{t<T_{]1,+\infty[}\}}f(X^{(\kappa)}_t)\frac{u(1-X^{(\kappa)}_t)}{u(1-x)}\right]\underset{ \text{convergence}}{\overset{\text{dominated}}{=}}f(x).$$
    In other words, the semi-group $(Q^{(\kappa)}_t)_{t\geq 0}$ is Feller.
\item By a classical result about time reversal, the reversed process is a moderate Markov process, its semi-group $\hat{Q}^{(\kappa)}_t(x,\mathrm{d}y)$ is given by the following formula:
    $$\langle g,Q^{(\kappa)}_tf\rangle_{G}=\langle \hat{Q}^{(\kappa)}_tg,f\rangle_{G},$$
    where $Q^{(\kappa)}_t(x,\mathrm{d}y)=\frac{U(y,1)}{U(x,1)}P^{(\kappa)}_t(x,\mathrm{d}y)$ and $G(\mathrm{d}x)=\int\limits_{0}^{\infty}Q^{(\kappa)}_t(0,\mathrm{d}x)\,\mathrm{d}t=\frac{U(0,x)U(x,1)}{U(0,1)}\,\mathrm{d}x$. Denote by $(\hat{P}^{(\kappa)}_t)_{t\geq 0}$ the dual semi-group of $(P^{(\kappa)}_t)_{t\geq 0}$ (or the semi-group of $-X^{(\kappa)}$ equivalently). Denote by $u(x)$ the function $U(0,x)$ and by $h(x)$ the function $U(x,1)$. Then,
    $$\langle g,Q^{(\kappa)}_tf\rangle_{G}=\int\limits_{0}^{1}\frac{P^{(\kappa)}_t(hf)(x)}{h(x)}g(x)\frac{u(x)h(x)}{u(1)}\,\mathrm{d}x.$$
    Then we use the duality between $(P^{(\kappa)}_t)_{t\geq 0}$ and $(\hat{P}^{(\kappa)}_t)_{t\geq 0}$:
    \begin{align*}
    \langle g,Q^{(\kappa)}_tf\rangle_{G}=&\int\limits_{0}^{1} f(x)\frac{\hat{P}^{(\kappa)}_t(ug)}{u(x)}\frac{u(x)h(x)}{u(1)}\,\mathrm{d}x\\
    =&\left\langle f,\frac{\hat{P}^{(\kappa)}_t(ug)}{u}\right\rangle_G.
    \end{align*}
    This implies that the semi-group $(\hat{Q}^{(\kappa)}_t)_{t\geq 0}$ associated with the reversed process of $Y$ is given by
    $$\hat{Q}^{(\kappa)}_t(x,\mathrm{d}y)=\hat{P}^{(\kappa)}_t(x,\mathrm{d}y)\frac{U(0,y)}{U(0,x)}=\hat{Q}^{(\kappa)}_t(x,\mathrm{d}y)\frac{U(1-y,1)}{U(1-x,1)}.$$
    By a change of variable, we find that it equals to the semi-group of $1-Y^{(\kappa)}$. By result 3 in this lemma, the reversed process starts from $1$. Then, it is exactly the left-continuous modification of $1-Y^{(\kappa)}$ for $Y^{(\kappa)}$ starting from $0$.
\item Since $U(0,0+)=\infty$, by a result of J. Neveu \cite{NeveuMR0130714} (see \cite[Proposition 1.9 (ii)]{BertoinMR1746300}), the subordinator $X^{(\kappa)}$ has zero drift. Then, by a result of Kesten \cite{KestenMR0272059} (See \cite[Proposition 1.9 (i)]{BertoinMR1746300}), for $x>0$, $\mathbb{P}[x\in\bar{R}(X^{(\kappa)})]=0$ where
$$\bar{R}(X^{(\kappa)})\overset{\mathrm{def}}{=}\text{closure of the range of the subordinator }X^{(\kappa)}.$$
Set $\bar{R}(Y^{(\kappa)})\overset{\mathrm{def}}{=}\text{closure of the range of the conditioned subordinator }Y^{(\kappa)}$. Finally, by the second and the third result of this lemma, for any $\delta>0$, the distributions of $\bar{R}(X^{(\kappa)})\cap[0,1-\delta]$ and $\bar{R}(Y^{(\kappa)})\cap[0,1-\delta]$ are absolute continuous to each other. Hence, for $0<x<1$, $\mathbb{Q}^{0}[x\in \bar{R}(Y^{(\kappa)})]=0$.
\end{enumerate}

\subsection{Proof of Lemma \ref{lem: tightness and skorokhod convergence towards a subordinator}}
Set $\tilde{\mathcal{G}}_{m}^{(n)}=\sigma(S^{(\kappa^{(n)})}_1,\ldots,S^{(\kappa^{(n)})}_m)$ for $m\geq 0$ and $\mathcal{G}^{(n)}_t=\tilde{\mathcal{G}}_{\lfloor n^{1-\alpha}t\rfloor}$ for $t\geq 0$. By definition, $(\mathcal{G}^{(n)}_t)_{t\geq 0}$ is a right-continuous filtration. As usual, by adding the negligible sets, we get a complete filtration which is denoted by the same notation. When $T$ is a $(\mathcal{G}^{(n)}_t)_{t\geq 0}$-stopping time, $\lfloor n^{1-\alpha}T\rfloor$ is a $(\tilde{\mathcal{G}}_{m}^{(n)})_{n\geq 0}$ stopping time. For the tightness, it is enough to verify the following Aldous' criteria (see \cite{JacodShiryaevMR1943877}): for each strictly positive $M$ and $\delta$,
\begin{align}
&\lim\limits_{K\rightarrow\infty}\lim\limits_{n\rightarrow\infty}\mathbb{P}\left[\frac{1}{n} S^{(\kappa^{(n)})}_{\lfloor n^{1-\alpha}M\rfloor}>K\right]=0\label{eq:aldous' criterion a},\\
&\lim\limits_{\theta\downarrow 0}\lim\limits_{n\rightarrow\infty}\sup\limits_{\substack{T_1,T_2\in\mathcal{T}^{(n)}_M,\\T_1\leq T_2\leq T_1+\theta}}\mathbb{P}\left[\left|\frac{1}{n} S^{(\kappa^{(n)})}_{\lfloor n^{1-\alpha} T_2\rfloor}-\frac{1}{n} S^{(\kappa^{(n)})}_{\lfloor n^{1-\alpha} T_1\rfloor}\right|>\delta\right]=0,\notag
\end{align}
where $\mathcal{T}^{(n)}_M$ is the collection of $(\mathcal{G}^{(n)}_t)_{t\geq 0}$-stopping times bounded by $M$. Condition \eqref{eq:aldous' criterion a} is implied by finite marginals convergence and $\mathbb{P}[X^{(\kappa)}_M=\infty]=0$. Since $S^{(\kappa^{(n)})}$ is a renewal process, for $T_1,T_2\in\mathcal{T}^{(n)}_M$ such that $T_1\leq T_2\leq T_1+\theta$, we have that $$\left|\frac{1}{n} S^{(\kappa^{(n)})}_{\lfloor n^{1-\alpha} T_2\rfloor}-\frac{1}{n} S^{(\kappa^{(n)})}_{\lfloor n^{1-\alpha} T_1\rfloor}\right|\leq \left|\frac{1}{n} S^{(\kappa^{(n)})}_{\lfloor n^{1-\alpha} T_1\rfloor+\lceil n^{1-\alpha}\theta\rceil}-\frac{1}{n} S^{(\kappa^{(n)})}_{\lfloor n^{1-\alpha} T_1\rfloor}\right|\overset{\text{law}}{=}\left|\frac{1}{n} S^{(\kappa^{(n)})}_{\lceil n^{1-\alpha}\theta\rceil}\right|.$$
By finite marginals convergence, we get that
\begin{multline*}
\lim\limits_{\theta\downarrow 0}\lim\limits_{n\rightarrow\infty}\sup\limits_{T_1,T_2\in\mathcal{T}^{(n)}_M,T_1\leq T_2\leq T_1+\theta}\mathbb{P}\left[\left|\frac{1}{n} S^{(\kappa^{(n)})}_{\lfloor n^{1-\alpha} T_2\rfloor}-\frac{1}{n}S^{(\kappa^{(n)})}_{\lfloor n^{1-\alpha}T_1\rfloor}\right|>\delta\right]\\
\leq\lim\limits_{\theta\downarrow 0}\lim\limits_{n\rightarrow\infty}\mathbb{P}\left[\left|\frac{1}{n}S^{(\kappa^{(n)})}_{\lceil n^{1-\alpha}\theta\rceil}\right|>\delta\right]\leq\lim\limits_{\theta\downarrow 0}\mathbb{P}[|X^{(\kappa)}_{2\theta}|>\delta]=0
\end{multline*}
and the proof is complete.

\subsection{Proof of Lemma \ref{lem: levy measure}}
The L\'{e}vy measure $\Pi$ and the renewal density $u(\cdot)=U(0,\cdot)$ are related through the Laplace exponent of the subordinator as follows:
\begin{align*}
\frac{1}{\Phi(\lambda)}&=\int\limits_{0}^{\infty}e^{-\lambda x}u(x)\,\mathrm{d}x,\\
\Phi(\lambda)&=\int\limits_{0}^{\infty}(1-e^{-\lambda x})\,\Pi(\mathrm{d}x)=\lambda\int\limits_{0}^{\infty}e^{-\lambda t}\bar{\Pi}(t),
\end{align*}
where $\bar{\Pi}$ is the tail mass of $\Pi$. We compute $\Phi(\lambda)$ from $u(x)=\left(\frac{2\sqrt{\kappa}}{1-e^{-2\sqrt{\kappa}x}}\right)^{\alpha}$:
\begin{align*}
\frac{1}{\Phi(\lambda)}=&\int\limits_{0}^{\infty}e^{-\lambda x}u(x)\,\mathrm{d}x\\
=&\int\limits_{0}^{\infty}\left(\frac{2\sqrt{\kappa}}{1-e^{-2\sqrt{\kappa}x}}\right)^{\alpha}e^{-\lambda x}\,\mathrm{d}x.
\end{align*}
We change the variable $x$ by $\frac{\log(1-s)}{-2\sqrt{\kappa}}$:
\begin{align*}
\frac{1}{\Phi(\lambda)}=&\int\limits_{0}^{1}\left(\frac{2\sqrt{\kappa}}{s}\right)^{\alpha}e^{-\lambda\frac{\log(1-s)}{-2\sqrt{\kappa}}}\frac{1}{2\sqrt{\kappa}(1-s)}\,\mathrm{d}s\\
=&(2\sqrt{\kappa})^{\alpha-1}\int\limits_{0}^{1}s^{-\alpha}(1-s)^{\frac{\lambda}{2\sqrt{\kappa}}-1}\,\mathrm{d}s\\
=&(2\sqrt{\kappa})^{\alpha-1}\text{Beta}\left(\frac{\lambda}{2\sqrt{\kappa}},1-\alpha\right).
\end{align*}
By applying the following equality\footnote{This is implied by Euler's reflection principle: $\Gamma(\alpha)\Gamma(1-\alpha)=\frac{\pi}{\sin(\alpha\pi)}$.}
$$\text{Beta}(x,y) \cdot \text{Beta}(x+y,1-y) = \frac{\pi}{x \sin(\pi y)},$$
with $x=\frac{\lambda}{2\sqrt{\kappa}}$ and $y=1-\alpha$, we get that
\begin{align*}
\Phi(\lambda)=&(2\sqrt{\kappa})^{1-\alpha}\frac{1}{\text{Beta}(\frac{\lambda}{2\sqrt{\kappa}},1-\alpha)}\\
=&(2\sqrt{\kappa})^{1-\alpha}\frac{\lambda}{2\sqrt{\kappa}}\frac{\sin(\alpha\pi)}{\pi}\text{Beta}\left(\frac{\lambda}{2\sqrt{\kappa}}+1-\alpha,\alpha\right)\\
=&\frac{1}{\pi}\lambda(2\sqrt{\kappa})^{-\alpha}\sin(\alpha\pi)\int\limits_{0}^{1}y^{\frac{\lambda}{2\sqrt{\kappa}}-\alpha}(1-y)^{\alpha-1}\,\mathrm{d}y.
\end{align*}
Next, we change the variable $y$ by $e^{-2\sqrt{\kappa}u}$:
\begin{align*}
\Phi(\lambda)=&\lambda\cdot\frac{1}{\pi}(2\sqrt{\kappa})^{-\alpha}\sin(\alpha\pi)\int\limits_{0}^{\infty}e^{-\lambda u}e^{2\alpha\sqrt{\kappa}u}(1-e^{-2\sqrt{\kappa}u})^{\alpha-1}\cdot2\sqrt{\kappa}e^{-2\sqrt{\kappa}u}\,\mathrm{d}u\\
=& \lambda\cdot\frac{1}{\pi}\sin(\alpha\pi)(2\sqrt{\kappa})^{1-\alpha}\int\limits_{0}^{\infty}e^{-\lambda u}(e^{2\sqrt{\kappa}u}-1)^{\alpha-1}\,\mathrm{d}u.
\end{align*}
Thus,
$$\bar{\Pi}(t)=\frac{1}{\pi}\sin(\alpha\pi)(2\sqrt{\kappa})^{1-\alpha}(e^{2\sqrt{\kappa}t}-1)^{\alpha-1}.$$
Finally, we find $\Pi(\mathrm{d}t)$ by calculating the derivative of $\bar{\Pi}$:
$$\Pi(\mathrm{d}t)=\,\mathrm{d}t\cdot \frac{1}{\pi}(1-\alpha)\sin(\alpha\pi)e^{2\sqrt{\kappa}(\alpha-1)t}\left(\frac{2\sqrt{\kappa}}{1-e^{-2\sqrt{\kappa}t}}\right)^{2-\alpha}.$$

\paragraph{Acknowledgements.} 
The author thanks Yves Le Jan for useful suggestions and stimulating discussions, the author thanks Sophie Lemaire and the anonymous reviewer for careful reading and valuable comments. 

\bibliographystyle{amsalpha}
\bibliography{reference}
\end{document}